\font\teneufm=eufm10 \font\seveneufm=eufm7 \font\fiveeufm=eufm5 
\newcommand{\C}{\mathbb{C}}
\newcommand{\Z}{\mathbb{Z}}
\newcommand{\g}{\mathfrak{g}}
 \DeclareMathOperator{\Aut}{Aut}
\DeclareMathOperator{\re}{Re}
\DeclareMathOperator{\diag}{diag}
\DeclareMathOperator{\Mat}{Mat}
\DeclareMathOperator{\GL}{GL}
\DeclareMathOperator{\I}{I}
\DeclareMathOperator{\End}{End}
\DeclareMathOperator{\Orb}{Orb}
\numberwithin{equation}{section}
\newtheorem{definition}{Definition}[section]
\newtheorem{example}[definition]{Example}
\theoremstyle{remark}
\newtheorem{remark}[definition]{Remark} 
\theoremstyle{plain} 
\newtheorem{theorem}[definition]{Theorem}
\newtheorem{lemma}[definition]{Lemma}
\newtheorem{corollary}[definition]{Corollary}
\newtheorem{proposition}[definition]{Proposition}
\def\Z{\mathbb Z}
\def\C{\mathbb C}
\begin{document}

\title{A family of simple $U(\mathfrak{h})$-free modules of rank 2 over $\mathfrak{sl} (2)$}
\author[D. Grantcharov]{Dimitar Grantcharov}
	\address{D. Grantcharov: Department of Mathematics, University of Texas at Arlington}
	\email{grandim@uta.edu}	
\author[K. Nguyen]{Khoa Nguyen}
	\address{K. Nguyen: Department of Mathematics and Statistics, Queen's University, Kingston, ON K7L 3N6, Canada}
	\email{k.nguyen@queensu.ca}	
\author[K. Zhao]{Kaiming Zhao}
	\address{K. Zhao: Department of Mathematics, Wilfrid Laurier University, Waterloo, ON N2L 3C5, Canada, and School of Mathematical Science, Hebei Normal University, Shijiazhuang, Hebei 050024, China.}
	\email{kzhao@wlu.ca}
         

\maketitle

\begin{abstract}
We study simple $\mathfrak{sl}(2)$-modules over $\C$ that are free of finite rank as $U(\mathfrak h)$-modules, where $\mathfrak h$ is a Cartan subalgebra of $\mathfrak{sl}(2)$. Our main result is an explicit classification of the scalar-type simple modules of rank $2$. We also give a criterion for when two such modules are isomorphic. Both the classification and the isomorphism problem reduce to twisted conjugacy classes in $\GL_2(\C[h])$ and rely on Cohn's standard form.

\medskip\noindent 2020 MSC: 17B10, 17B20 \\

\noindent Keywords and phrases: Lie algebra, simple modules, $\mathfrak{sl}(2)$-modules, non-weight modules

\end{abstract}

\section{Introduction}
The Lie algebra  $\mathfrak{sl}(2)$ is a classical object and continues to serve as a basic point of reference in the representation theory of complex Lie algebras. The category of
generalized weight $\mathfrak{sl}(2)$-modules---i.e.\ modules on which a Cartan subalgebra acts locally
finitely, and which contains the weight modules as a full subcategory---has been described in detail; see,
for example, \cite{Ma} and the references therein.  A key feature is that every such module decomposes as
a direct sum of generalized $\mathfrak h$-weight spaces (equivalently, $\mathfrak h$-locally finite
submodules), for a fixed Cartan subalgebra $\mathfrak h=\C h$ of $\mathfrak{sl}(2)$.

In contrast, outside the generalized weight setting the representation theory becomes much less explicit,
even for simple modules. Block’s foundational work~\cite{Bl} shows that simple $\mathfrak{sl}(2)$-modules
are parameterized by similarity classes of irreducible elements in a certain Euclidean (Ore) skew algebra
$R$ of differential-operator type. In particular, the non-weight simple modules arise as cyclic left
quotients $R/Ra$ for such irreducible elements $a\in R$. While this gives a conceptual
parametrization, irreducible elements of $R$ are difficult to describe in concrete terms, so the result
does not by itself provide a convenient realization of explicit families of simple non-weight modules.

A natural and broad class of non-weight modules is obtained by imposing freeness over $U(\mathfrak{h})$.
Let $\mathcal{M}$ be the full subcategory of $U(\mathfrak{sl}(2))$-modules whose restriction to
$U(\mathfrak{h})$ is free of finite rank; equivalently, for $M\in\mathcal{M}$ one has
$M\simeq U(\mathfrak{h})^{\oplus n}=\mathbb{C}[h]^{\oplus n}$ as a $U(\mathfrak{h})$-module for some $n\ge 1$.
These modules can be viewed as complementary to the generalized weight setting: rather than decomposing
into $\mathfrak{h}$-(generalized) weight spaces, they are free over $U(\mathfrak{h})$.
The rank-one case is completely understood~\cite{Nil1}, while for higher rank important but partial results
and constructions are available; see, e.g.,~\cite{BSh,FLM, LN, GN, GN2, Me, MP}.

In this paper we initiate the study of rank-$2$ objects $M$ in $\mathcal M$, with the goal of eventually
obtaining an explicit classification of the simple modules in this rank.  Choosing a $\C[h]$-basis
identifies $M$ with $\C[h]^{\oplus 2}$, so that $h$ acts by multiplication.  The generators $e$ and $f$
then act by explicit difference operators: their actions are encoded by matrices in
$\Mat_2(\C[h])$ composed with the shift automorphisms $\sigma$ and $\sigma^{-1}$, respectively, where
$\sigma\big(p(h)\big)=p(h-1)$ (see Theorem~\ref{U(h)-free-realization}).  A convenient parameterization of such
modules uses three pieces of data: a central character parameter $\alpha\in\C$, an ordered partition
${\bf a}=(a_1,a_2,a_3)$ of $2$, and a matrix $K(h)\in\GL_2(\C[h])$. Here $\bf a$ and $K(h)$ are determined from the Smith normal form $P_{\alpha,{\bf a}}$ 
of the polynomial matrix  encoding the action of $f$.

In the present paper we complete the first part of the rank-$2$ project, namely the case of
\emph{scalar-type} modules, i.e.\ those for which the Smith normal form $P_{\alpha,{\bf a}}$ is a scalar matrix,
equivalently ${\bf a}\in\{(2,0,0),(0,2,0),(0,0,2)\}$.
More precisely, we give a complete classification and an explicit realization of all \emph{simple}
scalar-type modules in $\mathcal M$.
The classification relies on the uniqueness of Cohn's standard form for matrices in $\GL_2(\C[h])$
\cite{Co}. For explicit computations we relate Cohn's standard form to another canonical factorization, which we call 
the \emph{LQ form}. It is obtained by applying the Euclidean algorithm to the first row of
$K(h)\in\GL_2(\C[h])$, and expresses $K(h)$ as a lower triangular front factor followed by a product of the
matrices $E(\cdot)$.
To our knowledge, this is the first paper to use decompositions in $\GL_2(\C[h])$ systematically in the rank-$2$
$U(\mathfrak h)$-free setting. The rank-$2$ case already presents a surprisingly rich collection of simples, and
extending these results to $\GL_n(\C[h])$ for $n\ge3$ poses a substantial challenge.

The isomorphism classes are governed by a natural twisted conjugacy relation on
$\GL_2(\C[h])$ (cf.\  Definition~\ref{sigmadef}): two matrices $A$ and $B$ are $\sigma^{-1}$-similar if
\[
B=P(h)^{-1}A\,P(h+1)\qquad\text{for some }P(h)\in\GL_2(\C[h]).
\]
Equivalently, these are the twisted conjugacy classes for the $\Z$-action on
$\GL_2(\C[h])$ induced by $\sigma^{-1}$. One can also view the set of $\sigma^{-1}$-similarity classes
as a non-abelian cohomology set $H^{1}(\Z,\GL_2(\C[h]))$ for this action. The main results are
summarized below.

\smallskip

\noindent{\bf (1) Matrices $K(h)$ for modules with no rank-$1$ submodules.}
The classification easily reduces to describing those $K(h)\in\GL_2(\C[h])$ for which the associated scalar-type module
contains no rank-$1$ submodules. Set
\[
\mathcal S
:=\GL_2(\C[h])\setminus\bigcup_{a,b\in\C^*}\Orb_{\sigma^{-1}}\bigl(\diag(a,b)\bigr),
\]
the complement of the $\sigma^{-1}$-similarity orbits of constant diagonal matrices.  By
Lemma~\ref{lemsiginv2} and Lemma~\ref{lemsiginv}, for scalar-type modules the condition $K(h)\in\mathcal S$
is equivalent to the absence of rank-$1$ submodules.  Using Cohn's standard form in $\GL_2(\C[h])$
\cite{Co}, built from the matrices
\[
E(u(h))=\begin{pmatrix}u(h)&1\\-1&0\end{pmatrix},
\]
we prove (Theorem~\ref{mainstructureS}) that $K(h)\in\mathcal S$ if and only if $K(h)$ is $\sigma^{-1}$-similar
to a matrix of the form
\[
\mathbf{E}_{(\beta_1,\beta_2)}(v_1,\dots,v_m)
=\diag(\beta_1,\beta_2)\,E(v_1)\cdots E(v_m),
\qquad \beta_1,\beta_2\in\C^*,\ m\ge1,\ v_i\in\C[h]\setminus\C.
\]
We also give a complete and explicit list of the representatives in standard form for matrices outside $\mathcal S$
(Theorem~\ref{explicitdescription}).

\smallskip

\noindent{\bf (2) Simplicity and isomorphisms for scalar-type rank $2$.}
Every scalar-type rank-$2$ object in $\mathcal M$ is of the form
\[
M\left(\alpha,{\bf a},K(h)\right),
\qquad
{\bf a}\in\{(2,0,0),(0,2,0),(0,0,2)\},\ \alpha\in\C,\ K(h)\in\GL_2(\C[h]).
\]
The simplicity criterion (Theorem~\ref{newfamily}) shows that $M(\alpha,{\bf a},K(h))$ is simple
if and only if $K(h)\in\mathcal S$, with one additional restriction in the case ${\bf a}=(0,2,0)$:
\[
\alpha\notin 1+\tfrac12\Z_{\ge0}.
\]
For the simple scalar-type modules, we then obtain an explicit isomorphism theorem. Writing
$K(h)=\mathbf{E}_{(c,d)}(u_1,\dots,u_k)$ with $u_i\in\C[h]\setminus\C$, we prove that two such modules are
isomorphic if and only if the parameters $(c,d,u_1,\dots,u_k)$ lie in the same orbit of an explicit
semidirect product group $G=\C^*\rtimes\Z$ acting on these tuples (Theorem~\ref{isomorphismtheorem}).

To keep the presentation focused, we do not pursue here the cohomological viewpoint on
$\sigma^{-1}$-similarity classes (and its relation to $\mathfrak h$-free modules).
\smallskip

The paper is organized as follows. In Section~\ref{sec:prelim} we provide the realization of
$U(\mathfrak h)$-free modules of finite rank with fixed central character, introduce $\sigma^{-1}$-similarity, and
review Cohn's standard form in $\GL_2(\C[h])$. Section~\ref{setS} is devoted to the structure of the set
$\mathcal S$ and to explicit representatives for matrices outside $\mathcal S$. Finally, in
Section~\ref{newfamilysection} we prove the simplicity criterion for scalar-type rank-$2$ modules and
establish the isomorphism theorem using the group $G$.

\section{Preliminaries}\label{sec:prelim}

\subsection{Conventions and notation}
Throughout this paper, we use $\mathbb{Z}$, $\mathbb{C}$, and $\mathbb{C}^*$ to denote the sets of integers, complex numbers, and nonzero complex numbers, respectively. We write $\mathbb{Z}_{\geq k}$ for the set of all integers $i$ such that $i \geq k$.  Unless otherwise stated, all vector spaces, Lie algebras (including $\mathfrak{sl}(2)$), homomorphisms, and tensor products are considered over the field $\mathbb{C}$.

For a Lie algebra $\mathfrak{g}$, we denote its universal enveloping algebra by $U(\mathfrak{g})$.

We fix the basis $\{e, f, h\}$ of the Lie algebra $\mathfrak{sl}(2)$ to be given by the following matrices:
$$
e := \begin{pmatrix} 0 & 1\\ 0 & 0 \end{pmatrix}, \qquad 
f := \begin{pmatrix} 0 & 0\\ 1 & 0 \end{pmatrix}, \qquad 
h := \begin{pmatrix} \frac{1}{2} & 0\\ 0 & -\frac{1}{2} \end{pmatrix}.
$$

The Lie bracket relations among these basis elements are:
\begin{equation*}
\label{commrelations}
[h, e] = e, \qquad [e, f] = 2h, \qquad [h, f] = -f.
\end{equation*}

In this paper we fix the quadratic  Casimir element of $\mathfrak{sl}(2)$ to be $c = (2h + 1)^2 + 4fe$. It is well known that the center   $\mathcal{Z}(\mathfrak{sl}(2))$  of  $U(\mathfrak{sl}(2))$  is $\mathbb{C}[c]$. We will say that an $\mathfrak{sl}(2)$-module $M$ has central character $\gamma \in \C$ if $c$ acts by multiplication by $\gamma$ on $M$.

For a ring $R$, we write $\mathrm{Mat}_n(R)$ for the ring of $n \times n$ matrices with entries in $R$.  We use the same symbol \(h\) for the Cartan element of \(\mathfrak{sl}(2)\) and as the variable of the ring \(\C[h]\). On \(\C[h]^{\oplus n}\), the element \(h \in \mathfrak{sl}(2)\) acts by multiplication by the indeterminate \(h\).
 We denote the group of all invertible $n \times n$ matrices with entries in $\C[h]$ as $\GL_n(\C[h])$. The shift automorphism $\sigma$ of $\mathbb{C}[h]$ will play a crucial role in this paper, namely,   $\sigma:\mathbb{C}[h]\;\to\;\mathbb{C}[h]$ defined by
$$
\sigma\big(p(h)\big)\;:=\;p(h-1),
\qquad\text{for every }p(h)\in\mathbb{C}[h].
$$

We will often write $A'(h)$ for a second matrix in $\Mat_n(\C[h])$ associated to $A(h) \in \Mat_n(\C[h])$; in such cases $A'$ is not the derivative of $A$.

\subsection{The category of $U(\mathfrak{h})$-free modules of finite rank} Define $\mathcal{M}$ to be the full subcategory of
$U(\mathfrak{sl}(2))\text{-mod}$ whose objects are $U(\mathfrak{sl}(2))$-modules $M$ such that
$$\operatorname{Res}^{U(\mathfrak{sl}(2))}_{\;U(\mathfrak{h})} M \;\simeq\; U(\mathfrak{h})^{\oplus n}\;=\;\C[h]^{\oplus n} \qquad\text{for some}\;\; n \in \Z_{\geq 1}. $$

Every object $M$ in $\mathcal{M}$  is  an $\mathfrak{sl}(2)$-module and a $U(\mathfrak h)$-free of rank $n$. For brevity, we will often call such $M$  a \emph{rank-$n$ module}. 

Throughout the paper,  we will identify the underlying space of any module $M$ in $\mathcal M$ with 
$M  =\C[h]^{\oplus n}$ for some $n \in \Z_{\geq 1}$ and assume that the action of $h$ is by multiplication. In explicit terms, every $m \in M$ is  a column vector $ \bigl(g_1(h)\;\;\dots\;\;g_n(h)\bigr)^{\mathsf T}$, with  $g_j(h) \in {\mathbb C} [h] $ and 
$$h \; \cdot \; \bigl(g_1(h)\;\;\dots\;\; g_{n}(h)\bigr)^{\mathsf T} \;=\; \bigl(hg_1(h)\;\;\dots\;\;hg_{n}(h)\bigr)^{\mathsf T}.$$

In this paper, we focus on the full subcategory $\mathcal M(2)\subset \mathcal M$ consisting of those that are $ U(\mathfrak h)$-free of rank $2$. Precisely,
$$
\mathcal M(2):=\Bigl\{\,M\in\mathcal M \ \bigm|\ 
\operatorname{Res}^{\; U(\mathfrak{sl}(2))}_{\;U(\mathfrak h)} M
\;\simeq\; U(\mathfrak h)^{\oplus 2}\Bigr\}.
$$

We next provide a description of  all $\mathfrak{sl}(2)$-modules that are $U(\mathfrak h)$-free of fixed rank $n$ and of fixed central character $\gamma$. The proofs rely on solving matrix equations related to the three defining relations of $\mathfrak{sl}(2)$, and Smith Normal Form technique. The details will appear in  \cite{GNZ}.

\begin{definition} \label{tripledef}
Let ${\bf{a}} = (a_-, a_0, a_+)$ be a triple with $a_-, a_+ \in \mathbb{Z}_{\geq 0}$ and $a_0 \in \mathbb{Z}$ such that $a_- + |a_0| + a_+ = n$.  
We define the diagonal matrix \( P_{({\bf{a}},\alpha)}(x) \in \Mat_n(\mathbb{C}[x]) \) as follows:
\begin{itemize}
    \item[(i)] \( P_{({\bf{a}},\alpha)}(x)_{ii} = 1 \) for \( i = 1, \ldots, a_- \),
    \item[(ii)] If \( a_0 \geq 0 \), then \( P_{({\bf{a}},\alpha)}(x)_{ii} = x - \alpha+1 \) for \( i = a_- + 1, \ldots, a_- + a_0 \),
    \item[(iii)] If \( a_0 < 0 \), then \( P_{({\bf{a}},\alpha)}(x)_{ii} = x + \alpha  \) for \( i = a_- + 1, \ldots, a_- - a_0 \),
    \item[(iv)] \( P_{({\bf{a}},\alpha)}(x)_{ii} = (x - \alpha+1)(x + \alpha) \) for \( i = a_- + |a_0| + 1, \ldots, n \).
\end{itemize}

We also denote $\overline{P}_{({\bf{a}},\alpha)}(x)  \in \Mat_n(\mathbb C[x]) $ so that 
$$\overline{P}_{({\bf{a}},\alpha)}(x)  P_{({\bf{a}},\alpha)}(x)  = -(x - \alpha+1)(x + \alpha) \I_n.$$
\end{definition}

\begin{theorem} \label{U(h)-free-realization}
Let $M = \C[h]^{\oplus n}$ be a module  in  $\mathcal{M}$ with a central character $\gamma = (2\alpha-1)^2$.  Then there exist $K(h)\in\GL_n(\C[h])$ and ${\bf{a}} = (a_-, a_0, a_+)$, such that the $U(\mathfrak{sl}(2))$-action on $M$ is given by:
$$ e\cdot \begin{pmatrix}g_1(h) \\\vdots \\g_n(h)\end{pmatrix} =  \sigma\left(K^{-1}(h)\overline{P}_{({\bf{a}},\alpha)}(h) \begin{pmatrix}g_1(h) \\ \vdots \\g_n(h)\end{pmatrix}\right),\quad f\cdot \begin{pmatrix}g_1(h)  \\\vdots \\g_n(h)\end{pmatrix} = P_{({\bf{a}},\alpha)}(h) K(h) \sigma^{-1} \begin{pmatrix}g_1(h) \\ \vdots \\g_n(h)\end{pmatrix}, $$
where $g_i(h) \in \C[h]$ for all $i \in \{1,\ldots, n\}$. The matrix $P_{({\bf{a}},\alpha)}(h)$ is the Smith Normal Form of the matrix of $f \in \End(M)$ relative to the standard basis of $\C[h]^{\oplus n}$.
\end{theorem}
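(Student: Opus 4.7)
The plan is to reduce the theorem to a Smith normal form (SNF) analysis after a suitable $\sigma$-twisted change of basis. First I would use the relations $[h,e]=e$ and $[h,f]=-f$ together with the fact that $h$ acts by multiplication by $h$ to show that $e$ is $\sigma$-semilinear and $f$ is $\sigma^{-1}$-semilinear over $\C[h]$. Fixing the standard basis of $\C[h]^{\oplus n}$, this forces
\begin{equation*}
e\cdot g = E(h)\,\sigma(g), \qquad f\cdot g = F(h)\,\sigma^{-1}(g),
\end{equation*}
for unique matrices $E(h),F(h)\in\Mat_n(\C[h])$. A direct computation gives $(fe)\cdot g = F(h)\,E(h+1)\,g$, and combined with $fe=(c-(2h+1)^2)/4$, the central character condition $c\cdot g=(2\alpha-1)^2 g$ translates into the single matrix identity
\begin{equation*}
F(h)\,E(h+1) = -(h-\alpha+1)(h+\alpha)\,\I_n.
\end{equation*}

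Next I would analyze the Smith normal form of $F(h)\in\Mat_n(\C[h])$. The displayed identity shows that the cokernel of $F(h)\colon\C[h]^{\oplus n}\to\C[h]^{\oplus n}$ is annihilated by $(h-\alpha+1)(h+\alpha)$, so every invariant factor $d_i$ of $F(h)$ divides $(h-\alpha+1)(h+\alpha)$. Combined with the divisibility chain $d_1\mid d_2\mid\cdots\mid d_n$ and the observation that $(h-\alpha+1)$ and $(h+\alpha)$ share no common factor unless $\alpha=\tfrac12$, this forces the SNF to take precisely the block shape of Definition~\ref{tripledef}: a block of $1$'s, a middle block consisting entirely of either $(h-\alpha+1)$'s or $(h+\alpha)$'s, and a final block of $(h-\alpha+1)(h+\alpha)$'s. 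This determines a triple $\mathbf{a}=(a_-,a_0,a_+)$, where the sign of $a_0$ records which middle factor appears, and identifies the SNF of $F(h)$ with $P_{(\mathbf{a},\alpha)}(h)$.

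Finally I would promote this SNF factorization to the stated form by a change of basis. Write $F(h)=U(h)^{-1}\,P_{(\mathbf{a},\alpha)}(h)\,V(h)^{-1}$ with $U,V\in\GL_n(\C[h])$ from the SNF theorem. A change of $\C[h]$-basis by $K_0\in\GL_n(\C[h])$ transforms $F\mapsto K_0^{-1}F\,\sigma^{-1}(K_0)$ and $E\mapsto K_0^{-1}E\,\sigma(K_0)$; taking $K_0=U^{-1}$ produces
\begin{equation*}
F(h) = P_{(\mathbf{a},\alpha)}(h)\,K(h), \qquad K(h):=V(h)^{-1}\sigma^{-1}\!\bigl(U(h)^{-1}\bigr)\in\GL_n(\C[h]),
\end{equation*}
and the SNF of the new matrix of $f$ remains $P_{(\mathbf{a},\alpha)}(h)$. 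Multiplying the key identity on the left by $\overline{P}_{(\mathbf{a},\alpha)}(h)$ and using $\overline{P}\,P=-(h-\alpha+1)(h+\alpha)\,\I_n$ then yields $K(h)\,E(h+1)=\overline{P}_{(\mathbf{a},\alpha)}(h)$, which after the shift $h\mapsto h-1$ is exactly the formula for $e$ stated in the theorem.

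The main obstacle is the SNF step: divisibility of each $d_i$ by $(h-\alpha+1)(h+\alpha)$ together with $d_1\mid\cdots\mid d_n$ still permits several a priori patterns, and one must rule out a mixed sequence of $(h-\alpha+1)$'s and $(h+\alpha)$'s in the middle block. This dichotomy is precisely what the integer $a_0\in\Z$ (with sign) encodes in Definition~\ref{tripledef}, and it is the only nontrivial case analysis in the proof; the remaining steps are routine manipulations of $\sigma$-twisted matrix identities over the PID $\C[h]$.
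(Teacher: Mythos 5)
The paper does not actually prove this theorem---it defers the details to \cite{GNZ}, describing the argument only as ``solving matrix equations related to the three defining relations of $\mathfrak{sl}(2)$, and Smith Normal Form technique''---and your proposal is a correct implementation of precisely that outline: $\sigma^{\pm1}$-semilinearity of $e$ and $f$ from the $h$-commutators, the identity $F(h)E(h+1)=-(h-\alpha+1)(h+\alpha)\I_n$ from the Casimir, and the invariant-factor/divisibility-chain analysis that forces the SNF to be some $P_{({\bf a},\alpha)}$. The one point worth stating explicitly is that the displayed form of the $f$-action is attained only after the change of basis by $U^{-1}$ (which you do carry out, since an arbitrary $F$ with SNF $P_{({\bf a},\alpha)}$ need not factor as $P_{({\bf a},\alpha)}K$ with $K$ polynomial), so the ``standard basis'' in the final sentence of the statement must be read as the new basis.
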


\begin{theorem} \label{isomorphismthm}
Let $\alpha, \beta \in \C$; let ${\bf a} = (a_-, a_0, a_+)$ and ${\bf b} = (b_-, b_0, b_+)$ be triples as defined in Definition~\ref{tripledef}; and let $K_1(h), K_2(h) \in \GL_n(\C[h])$. Then
$$
M\left( \alpha, {{\bf a}}, K_1(h) \right) \simeq M\left( \beta,{\bf b}, K_2(h) \right)
$$
if and only if the following conditions hold:
\begin{itemize}
    \item either $\alpha = \beta$ and ${\bf a} = {\bf b}$, or $\alpha + \beta = 1$ and ${\bf b} = (a_-, -a_0, a_+)$;
    \item there exists $P(h) \in \GL_n(\C[h])$ such that
    $$
    P(h)P_{({\bf a}, \alpha)}(h)K_2(h) =P_{({\bf a}, \alpha)}(h)K_1(h) P(h+1).
    $$
\end{itemize}
\end{theorem}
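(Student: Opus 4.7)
The plan is to reduce the question to a matrix computation in $\GL_n(\C[h])$ and then extract the parameters via a Smith normal form argument. First I would observe that any $U(\mathfrak{sl}(2))$-isomorphism $\Phi : M(\beta,{\bf b},K_2(h)) \to M(\alpha,{\bf a},K_1(h))$ is in particular $U(\mathfrak h)$-linear; since on both sides $h$ acts by multiplication by the indeterminate on a free $\C[h]$-module of rank $n$, the map $\Phi$ must be $\C[h]$-linear, hence of the form $v \mapsto P(h)v$ for some $P(h) \in \GL_n(\C[h])$. Comparing how the Casimir acts on the two sides immediately gives $(2\alpha-1)^2 = (2\beta-1)^2$, so $\alpha = \beta$ or $\alpha + \beta = 1$.

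Next I would write out the commutation $\Phi(f \cdot v) = f \cdot \Phi(v)$ using the explicit realization in Theorem~\ref{U(h)-free-realization} and the elementary identity $\sigma^{-1}(P(h)v) = P(h+1)\sigma^{-1}(v)$. After cancelling $\sigma^{-1}(v)$ this becomes
$$
P(h)\,P_{({\bf b},\beta)}(h)\,K_2(h) \;=\; P_{({\bf a},\alpha)}(h)\,K_1(h)\,P(h+1).
$$

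The central step is to upgrade this to $P_{({\bf a},\alpha)}(h) = P_{({\bf b},\beta)}(h)$. Rewriting the displayed equation shows that $P_{({\bf a},\alpha)}(h)K_1(h)$ and $P_{({\bf b},\beta)}(h)K_2(h)$ are related by left and right multiplication by elements of $\GL_n(\C[h])$; since each $K_i(h)$ itself lies in $\GL_n(\C[h])$, the Smith normal forms of the two products are precisely $P_{({\bf a},\alpha)}(h)$ and $P_{({\bf b},\beta)}(h)$, and uniqueness of Smith normal form with monic elementary divisors forces these to be equal. A direct comparison of diagonal entries, using $(2\alpha-1)^2 = (2\beta-1)^2$ and the observation that the substitution $\alpha \mapsto 1-\alpha$ swaps the linear factors $h-\alpha+1$ and $h+\alpha$, then yields exactly the two alternatives in the first bullet. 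I expect the main bookkeeping obstacle to lie in this case analysis, especially at $\alpha = 1/2$ (where the two alternatives collapse) and for triples with $a_0 = 0$ (where the sign flip in ${\bf b}$ is vacuous).

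For the converse, assume the two bulleted conditions. The first yields $P_{({\bf a},\alpha)}(h) = P_{({\bf b},\beta)}(h)$, so the second bullet is precisely the statement that $\Phi(v) := P(h)v$ commutes with $f$. It commutes with $h$ trivially and with $c$ because the central characters match. Writing $fe = \tfrac{1}{4}\bigl(c - (2h+1)^2\bigr)$ shows that $\Phi$ commutes with $fe$; since $\det(P_{({\bf a},\alpha)}(h) K_1(h))$ is a nonzero element of $\C[h]$, the action of $f$ is injective, so one may cancel $f$ from the left to conclude that $\Phi$ also commutes with $e$. Hence $\Phi$ is a $U(\mathfrak{sl}(2))$-isomorphism.
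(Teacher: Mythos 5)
The paper itself does not include a proof of Theorem~\ref{isomorphismthm} (it is deferred to \cite{GNZ}, described only as ``solving matrix equations related to the three defining relations of $\mathfrak{sl}(2)$, and Smith Normal Form technique''), and your argument is correct and implements exactly that announced strategy: the $f$-intertwining identity, uniqueness of the Smith normal form under left and right multiplication by elements of $\GL_n(\C[h])$ to force $P_{({\bf a},\alpha)}=P_{({\bf b},\beta)}$, and the Casimir trick $4fe=c-(2h+1)^2$ together with injectivity of $f$ to recover the $e$-compatibility in the converse. Your flagged edge cases ($\alpha=\tfrac12$ and $a_0=0$) are precisely where the two alternatives of the first bullet overlap, and they check out.
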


\begin{remark}
The theorem above implies that, unless $\gamma = 0$, the modules $M\left(\frac{1}{2}(1 + \gamma^{1/2}), {\bf{a}} , K(h) \right)$ and $M\left(\frac{1}{2}(1 - \gamma^{1/2}), {\bf{a}}, K(h) \right)$ have central character $\gamma$ (here $\gamma^{1/2}$ is a fixed square root of $\gamma$) and in general are nonisomorphic. In view of Theorem \ref{isomorphismthm}, one may fix $\alpha$ with the property $\re(\alpha) \geq 1/2$ and abbreviate $M\left({\bf{a}}, K(h) \right)$ for the module $M\left(\alpha, {\bf{a}}, K(h) \right)$ of central character $\gamma$. In this paper, we prefer to keep $\alpha$ as a part of the parameterization of the module.
\end{remark}

\subsection{$\sigma^{-1}$--similarity}
In this subsection we recall the notions of $\sigma$- and $\sigma^{-1}$-similarity, following \cite{GNZ}. Our emphasis throughout is on $\sigma^{-1}$-similarity.
\begin{definition}[$\sigma$--similarity and $\sigma^{-1}$--similarity]\label{sigmadef}
Let $A, B \in \Mat_n(\mathbb{C}[h])$. We say that $A$ and $B$ are \emph{$\sigma$-similar}, written $A\sim_{\sigma} B$, if there exists $P(h)\in\GL_n(\C[h])$ such that
$$
B \;=\; P(h)^{-1}\,A\,P(h-1) \;=\; P(h)^{-1}A\,\sigma(P(h)).
$$
Similarly, $A$ and $B$ are \emph{$\sigma^{-1}$-similar}, written $A\sim_{\sigma^{-1}} B$, if there exists $P(h)\in\GL_n(\C[h])$ such that
$$
B \;=\; P(h)^{-1}\,A\,P(h+1) \;=\; P(h)^{-1}A\,\sigma^{-1}(P(h)).
$$
\end{definition}

\begin{remark}
The relations of $\sigma$-similarity and $\sigma^{-1}$-similarity on $\Mat_n(\C[h])$ are equivalence relations. The second bullet of Theorem~\ref{isomorphismthm} can be written as
$$
P_{(\mathbf a,\alpha)}(h)\,K_2(h)\ \sim_{\sigma^{-1}}\ P_{(\mathbf a,\alpha)}(h)\,K_1(h),
$$
i.e., these matrices are $\sigma^{-1}$-similar.
\end{remark}

\begin{definition}
For $A(h)\in \Mat_n(\C[h])$, its \emph{$\sigma^{-1}$-orbit} in $\Mat_n(\C[h])$ is
$$
\Orb_{\sigma^{-1}}(A(h))
:= \big\{\, P(h)^{-1} A(h)\, P(h+1) \mid P(h)\in \GL_n(\C[h]) \,\big\}.
$$
Note that if $K(h)\in \GL_n(\C[h])$, then $\Orb_{\sigma^{-1}}(K(h))\subset \GL_n(\C[h])$.
\end{definition}

\begin{remark} \label{rem-def-h1}
Let $\mathcal G=\GL_n(\C[h])$, and let $\Z$ act on $\mathcal G$ via $\sigma^{-1}$, i.e.
$(m \cdot P)(h) := P(h+m)$.
Then the elements of the (nonabelian) cohomology $H^1(\Z,\mathcal G)$ are precisely the
$\sigma^{-1}$-equivalence classes in $\mathcal G$. More precisely, we have a bijection
$\mathcal G/\sim_{\sigma^{-1}} \;\to\; H^1(\Z,\mathcal G)$,
$\Orb_{\sigma^{-1}}(K(h)) \mapsto [c_{K}]$, where $c_K:\Z\to\mathcal G$ is the $1$--cocycle defined by
\[
c_K(0)=\I,\qquad
c_K(m)=\prod_{i=0}^{m-1}K(h+i)\ (m\ge1),\qquad
c_K(-m)=\prod_{i=1}^{m}K(h-i)^{-1}\ (m\ge1).
\]
\end{remark}

\begin{definition}
A rank-$n$ $U(\mathfrak h)$-free module $M(\alpha,\mathbf a,K(h))$ is said to be
\emph{of scalar type} if
\[
P_{(\mathbf a,\alpha)}(h)=p(h)\,\I_n
\]
for some monic polynomial $p(h)\in\C[h]$ that divides $(h-\alpha+1)(h+\alpha)$.
Note that $M(\alpha,\mathbf a,K(h))$  is of scalar type if and only if 
\[
\mathbf a\in\{(n,0,0),(0,n,0),(0,-n,0),(0,0,n)\}.
\]
\end{definition}

\begin{lemma}\label{lemsiginv2}
Let $\alpha\in\C$, $K(h)\in\GL_2(\C[h])$, and ${\bf a}\in\{(2,0,0),(0,2,0),(0,0,2)\}$ (so $M\left(\alpha, \mathbf a, K(h)\right)$ is of scalar type).
Then $M\left(\alpha, \mathbf a, K(h)\right)$ has a rank-$1$ submodule if and only if
\[
K(h)\ \sim_{\sigma^{-1}}\ \begin{pmatrix} a & u(h)\\ 0 & b \end{pmatrix},
\qquad a,b\in\C^*,\ u(h)\in\C[h].
\]
\end{lemma}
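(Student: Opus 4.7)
The strategy is to combine the explicit action formulas of Theorem~\ref{U(h)-free-realization} with the isomorphism criterion of Theorem~\ref{isomorphismthm}, both of which simplify in the scalar-type case. Throughout the argument, write $P_{(\mathbf a,\alpha)}(h)=p(h)\,\I_2$ and $\overline P_{(\mathbf a,\alpha)}(h)=\overline p(h)\,\I_2$, where $\overline p(h):=-(h-\alpha+1)(h+\alpha)/p(h)\in\C[h]$ thanks to the scalar-type hypothesis. Let $\mathbf e_1,\mathbf e_2$ denote the standard $\C[h]$-basis of $\C[h]^{\oplus 2}$.

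For $(\Leftarrow)$, I note that having a rank-$1$ submodule depends only on the isomorphism class of the module; by the scalar case of Theorem~\ref{isomorphismthm}, this class depends only on the $\sigma^{-1}$-similarity class of $K(h)$. Hence I may assume outright that $K(h)=\begin{pmatrix}a&u(h)\\0&b\end{pmatrix}$ with $a,b\in\C^*$. A direct calculation using Theorem~\ref{U(h)-free-realization} then shows that $\C[h]\,\mathbf e_1$ is $\mathfrak{sl}(2)$-stable, since $f\cdot\mathbf e_1=p(h)K(h)\mathbf e_1=a\,p(h)\,\mathbf e_1$ and $e\cdot\mathbf e_1=\sigma\bigl(K^{-1}(h)\,\overline p(h)\,\mathbf e_1\bigr)=a^{-1}\,\overline p(h-1)\,\mathbf e_1$.

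For $(\Rightarrow)$, let $N\subset M(\alpha,\mathbf a,K(h))$ be a rank-$1$ submodule. As a $\C[h]$-submodule of $\C[h]^{\oplus 2}$ it is free of rank $1$, so $N=\C[h]\,v$ for some nonzero $v=(v_1,v_2)^{\mathsf T}\in\C[h]^{\oplus 2}$. Factor $v=d(h)\,v'$ with $d(h):=\gcd(v_1,v_2)$, so that $v'$ is primitive and therefore extends to a $\C[h]$-basis of $\C[h]^{\oplus 2}$. Let $P(h)\in\GL_2(\C[h])$ have $v'$ as its first column and any completion as its second, and set $K'(h):=P(h)^{-1}K(h)P(h+1)$. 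Then $K(h)\sim_{\sigma^{-1}}K'(h)$, and the $\C[h]$-linear map $\phi\colon M(\alpha,\mathbf a,K'(h))\to M(\alpha,\mathbf a,K(h))$, $w\mapsto P(h)\,w$, is an $\mathfrak{sl}(2)$-module isomorphism (this is precisely the correspondence in the scalar case of Theorem~\ref{isomorphismthm}). Since $\phi(d(h)\mathbf e_1)=d(h)\,v'=v$, the preimage $\phi^{-1}(N)=\C[h]\,d(h)\,\mathbf e_1$ is an $\mathfrak{sl}(2)$-submodule of $M(\alpha,\mathbf a,K'(h))$. Computing the $f$-action gives $f\cdot(d(h)\mathbf e_1)=p(h)\,d(h+1)\bigl(K'_{11}(h)\mathbf e_1+K'_{21}(h)\mathbf e_2\bigr)$, so invariance in $\C[h]\,d(h)\,\mathbf e_1$ forces $p(h)\,d(h+1)\,K'_{21}(h)=0$, whence $K'_{21}(h)=0$. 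Finally $\det K'(h)=K'_{11}(h)\,K'_{22}(h)\in\C[h]^\times=\C^*$ forces $K'_{11},K'_{22}\in\C^*$, and setting $a:=K'_{11}$, $b:=K'_{22}$, $u(h):=K'_{12}(h)$ yields the required upper-triangular representative.

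The main technical point is tracking the rank-$1$ submodule $N$ through the change of frame $\phi$: extracting the primitive part $v'$ of $v$ is exactly what makes it possible to use $v'$ as the first column of an element of $\GL_2(\C[h])$, which is in turn what converts the $f$-invariance of $N$ into the structural vanishing $K'_{21}(h)=0$. The $e$-action need not be examined, since $f$-invariance alone already pins down the shape of $K'(h)$.
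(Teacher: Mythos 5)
Your proof is correct and follows essentially the same route as the paper's: the "if" direction reduces to the upper-triangular representative and checks stability of $\C[h]\mathbf e_1$ directly, and the "only if" direction extracts the primitive part of the generator, completes it to a column of a matrix in $\GL_2(\C[h])$, and uses $f$-stability to force the $(2,1)$ entry of the conjugated matrix to vanish, with the unit determinant pinning down the constant diagonal. No gaps.
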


\begin{proof}
Write $R=\C[h]$ and $M=R^{\oplus 2}$. In the scalar case there exist $p,q\in R$ with
$p(h)q(h)=-(h-\alpha+1)(h+\alpha)$,  such that,  for all $v\in M$.
\[
e\cdot v=\sigma\big(q(h)\,K(h)^{-1}v\big),\qquad
f\cdot v=p(h)\,K(h)\,\sigma^{-1}(v).
\]

For the ``if part'', observe that if $K(h)=\begin{psmallmatrix}a&u(h)\\ 0&b\end{psmallmatrix}$ with $a,b\in\C^*$, then
$Re_1$ is stable under $e$ and $f$; hence it is a rank-$1$ submodule.

We next prove the ``only if'' part. Assume $L=Rv$ is a rank-$1$ submodule of $M$. Write $v=(v_1,v_2)^{\mathsf T}$ and let
$g(h)=\gcd(v_1,v_2)\in R$; choose a primitive $u\in R^{\oplus 2}$ with $v=g(h)\,u$.
Since $R$ is a PID, there exists $S(h)\in\GL_2(R)$ whose first column is $u$
(i.e. $S(h)e_1=u$). Consider the $\sigma^{-1}$-similar module $M'$ with
\[
K'(h)\ :=\ S(h)^{-1}K(h)S(h+1)\quad\big(\text{so }K\sim_{\sigma^{-1}}K'\big).
\]
Under this similarity, the submodule corresponding to $L$ is
\[
S(h)^{-1}L \;=\; R\cdot S(h)^{-1}\big(g(h)\,u\big)
\;=\; R\cdot g(h)\,e_1.
\]

Write $K'(h)=\begin{psmallmatrix}a(h)&b(h)\\ c(h)&d (h)\end{psmallmatrix}$. 
The $f$–stability of $R\cdot g(h)\,e_1$ implies that for some $\xi(h)\in R$,
\[
\xi(h)\,g(h)\,e_1 \;= \;f\cdot\big(g(h)\,e_1\big)\;=\;p(h)\,K'(h)\,\sigma^{-1}\big(g(h)\,e_1\big)
\;=\;p(h)\,K'(h)\,\big(g(h+1)\,e_1\big)
\]

Comparing the second coordinates gives
\[
p(h)\,c(h)\,g(h+1)\;=\;0.
\]

Hence $c(h)= 0$, and  $K'(h)$ is upper triangular. Because $\det K'=\det K\in\C^\times$, both diagonal
entries $a(h),d(h)$ are units of $R$.
Therefore
\[
K'(h)=\begin{pmatrix} a & u(h)\\[2pt] 0 & b \end{pmatrix}
\quad\text{for some }u(h)\in R,
\]
for some $a,b\in\C^*$ as required.
\end{proof}

\begin{definition}\label{1st-definitionofS}
We define $\mathcal S$ to be the complement in $\GL_2(\C[h])$ of the union of the $\sigma^{-1}$-similarity equivalence classes of constant diagonal matrices:
$$
\mathcal S := \GL_2(\C[h]) \setminus \bigcup_{a,b\in\C^*} \Orb_{\sigma^{-1}}\bigl(\diag(a,b)\bigr).
$$
\end{definition}

A detailed understanding of $\mathcal S$ is essential for our main theorem.

\subsection{Standard Form for Matrices in $\GL_2(\C[h])$} \label{standardformdef} 
Throughout, we use the notation
$$
E(u(h)) := \begin{pmatrix} u(h) & 1 \\ -1 & 0 \end{pmatrix}\quad (u(h)\in\C[h]).
$$

\begin{lemma}[\cite{Co}, Theorem 2.2]\label{shortenlem}
For $ u(h), v(h), w(h) \in \mathbb{C}[h] $ and $ \beta_1, \beta_2 \in \mathbb{C}^* $, the following identities hold:
\begin{enumerate}
    \item[(i)] $ E(u(h))E(0)E(v(h)) = -E(u(h)+v(h)) $,
    \item[(ii)] $ E(\beta_1)E\left(\beta_1^{-1}\right)E(\beta_1) = -\diag\left(\beta_1,\beta_1^{-1}\right)$,
    \item[(iii)] $ E(u(h)) \diag(\beta_1, \beta_2) = \diag(\beta_2, \beta_1)E\left(\tfrac{\beta_1}{\beta_2}u(h)\right) $,
    \item[(iv)] $ E(u(h))E(v(h))^{-1} = E(u(h)-v(h))E(0)^{-1} = -E(u(h)-v(h))E(0) $,
    \item[(v)] $ E(u(h))E(v(h))^{-1}E(w(h)) = E(u(h)-v(h)+w(h)) $,
    \item[(vi)] $ E(u(h))E(\beta_1)E(v(h)) = E\left(u(h)-\beta_1^{-1}\right) \diag\left(\beta_1, \beta_1^{-1}\right) E\left(v(h)-\beta_1^{-1}\right)$.
\end{enumerate}
\end{lemma}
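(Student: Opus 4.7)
The plan is to establish each identity by direct $2\times 2$ matrix computation, with (iv), (v) and (vi) also admitting short derivations from (i)--(iii). Each of the six identities is an equality of matrices whose entries are polynomials in $h$, so entrywise verification is always available as a fallback; indeed the polynomial variable plays no role beyond the commutativity of $\C[h]$. The two elementary facts that drive all the reductions are
\[
E(0)^2 \;=\; -\I_2, \qquad E(u(h))^{-1} \;=\; \begin{pmatrix}0 & -1 \\ 1 & u(h)\end{pmatrix},
\]
so in particular $E(0)^{-1} = -E(0)$.

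First I would verify (i), (ii) and (iii) directly. For (i), one computes $E(u)E(0) = \bigl(\begin{smallmatrix}-1 & u \\ 0 & -1\end{smallmatrix}\bigr)$ and then multiplies by $E(v)$ to obtain $-E(u+v)$. For (ii), one computes $E(\beta_1)E(\beta_1^{-1}) = \bigl(\begin{smallmatrix}0 & \beta_1 \\ -\beta_1^{-1} & -1\end{smallmatrix}\bigr)$ and then multiplies by $E(\beta_1)$ to obtain $-\diag(\beta_1, \beta_1^{-1})$. For (iii), multiplying out both sides yields the common matrix $\bigl(\begin{smallmatrix}\beta_1 u & \beta_2 \\ -\beta_1 & 0\end{smallmatrix}\bigr)$.

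Next I would derive (iv) and (v) from (i). For (iv), substituting $u\mapsto u-v$ in (i) gives $E(u-v)E(0)E(v) = -E(u)$; right-multiplying by $E(v)^{-1}$ and using $E(0)^{-1}=-E(0)$ yields both equivalent forms in (iv). For (v), right-multiplying (iv) by $E(w)$ produces $E(u)E(v)^{-1}E(w) = -E(u-v)E(0)E(w)$, and applying (i) with $u\mapsto u-v$, $v\mapsto w$ collapses the right-hand side to $E(u-v+w)$.

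For (vi), the cleanest route is direct verification: both $E(u)E(\beta_1)E(v)$ and $E(u-\beta_1^{-1})\diag(\beta_1, \beta_1^{-1})E(v-\beta_1^{-1})$ multiply out to the common matrix $\bigl(\begin{smallmatrix} u\beta_1 v - u - v & u\beta_1 - 1 \\ 1 - \beta_1 v & -\beta_1\end{smallmatrix}\bigr)$, the only nontrivial step being cancellation of the two $\beta_1^{-1}$ contributions on the right-hand side. Alternatively, one could substitute the expression for $\diag(\beta_1,\beta_1^{-1})$ from (ii) into the right-hand side and apply (i) repeatedly to collapse the resulting five-fold product of $E(\cdot)$'s. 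The only real obstacle throughout is bookkeeping: tracking the signs produced by $E(0)^{-1}=-E(0)$ and the transposition of diagonal entries when invoking (iii). No conceptual input is required beyond the two facts recorded above.
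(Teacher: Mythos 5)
Your proposal is correct: all six identities check out by the direct $2\times 2$ computations you describe (I verified the entrywise products, including the cancellation of the two $\beta_1^{-1}$ terms in the $(1,1)$ entry for (vi)), and the derivations of (iv) and (v) from (i) via $E(0)^{-1}=-E(0)$ are valid. The paper itself gives no proof of this lemma---it is imported verbatim from Cohn's Theorem~2.2---so direct verification is exactly the expected argument, and nothing is missing.
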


\begin{remark}\label{inverseremark}
By Lemma~\ref{shortenlem}(iv) with $u(h)=0$, for every $v(h)\in\C[h]$,
$$
E(v(h))^{-1}=E(0)\,E(-v(h))\,E(0).
$$
\end{remark}
\begin{theorem}[\cite{Co}, Theorem~2.2]\label{GL2form}
Every matrix $A\in\GL_2(\C[h])$ admits a factorization of the form
$$
A=\diag(\beta_1,\beta_2)\qquad \text{or}\qquad A=\diag(\beta_1,\beta_2)\,\prod_{i=1}^k E\big(u_i(h)\big),
$$
for some $\beta_1,\beta_2\in\C^*$, $k\geq1$, $u_i(h)\in\C[h]$.
\end{theorem}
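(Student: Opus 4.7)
The plan is to algorithmically reduce an arbitrary $A\in\GL_2(\C[h])$ to a diagonal matrix by finitely many right multiplications by matrices $E(u)$, and then invert to move these factors to the right of the diagonal matrix. The key computation underlying everything is
\[
A\cdot E(u)=\begin{pmatrix}a&b\\c&d\end{pmatrix}\begin{pmatrix}u&1\\-1&0\end{pmatrix}=\begin{pmatrix}au-b&a\\cu-d&c\end{pmatrix},
\]
so right multiplication by $E(u)$ replaces the first row $(a,b)$ by $(au-b,a)$. This lets us run the Euclidean algorithm of $\C[h]$ (with degree as the Euclidean function) directly on the entries of the first row of $A$, while keeping the determinant in $\C^*$ since $\det E(u)=1$.

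The Euclidean reduction goes as follows. Suppose the first row of the current matrix is $(a,b)$, not both zero. If $\deg a\le\deg b$, write $b=qa+r$ with $\deg r<\deg a$; then choosing $u=q$ produces a new first row $(-r,a)$, so the minimum of the degrees of the first-row entries strictly decreases from $\deg a$ to $\deg r$ (with the convention $\deg 0=-\infty$). If $\deg a>\deg b$, multiplying by $E(0)$ swaps to $(-b,a)$ and puts us in the previous case. Iterating, after finitely many steps we obtain $u_1,\dots,u_k\in\C[h]$ with
\[
T:=A\,E(u_1)\cdots E(u_k)
\]
having a zero entry in the first row; since $\det T=\det A\in\C^*$, the other entry of the first row must be a nonzero constant $\beta_1\in\C^*$. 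A further $E(0)$ if needed arranges the first row to be $(0,\beta_1)$. Writing $T=\begin{pmatrix}0&\beta_1\\\gamma&d(h)\end{pmatrix}$, the condition $\det T=-\beta_1\gamma\in\C^*$ forces $\gamma\in\C^*$.

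For the final diagonalization, since $\gamma\in\C^*$ we have $d/\gamma\in\C[h]$, and
\[
T\cdot E(d/\gamma)=\begin{pmatrix}0&\beta_1\\\gamma&d\end{pmatrix}\begin{pmatrix}d/\gamma&1\\-1&0\end{pmatrix}=\diag(-\beta_1,\gamma).
\]
Inverting yields $A=\diag(-\beta_1,\gamma)\,E(d/\gamma)^{-1}E(u_k)^{-1}\cdots E(u_1)^{-1}$, and Remark~\ref{inverseremark} lets us rewrite each $E(v)^{-1}=E(0)E(-v)E(0)$, thereby expressing $A$ as a diagonal matrix times a product of $E(\cdot)$'s as required. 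The edge case in which $A$ is itself diagonal is immediate since $\det A\in\C^*$ forces the diagonal entries to lie in $\C^*$, giving the first alternative in the statement. The only real subtlety is correctly setting up the termination of the Euclidean step via the minimum-degree invariant on the first row; once this is done, every remaining manipulation is a direct application of the identities in Lemma~\ref{shortenlem} and Remark~\ref{inverseremark}.
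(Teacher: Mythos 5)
Your proof is correct, and it is essentially the same argument the paper itself uses: although Theorem~\ref{GL2form} is formally cited from Cohn, the paper's Proposition~\ref{prop:standard-form} proves the same factorization by running the Euclidean algorithm on the first row and peeling off $E(q_t)$ factors, which is exactly your reduction read in the opposite direction. The only cosmetic difference is that you eliminate the residual lower-triangular factor with one extra $E(d/\gamma)$ to reach a genuine diagonal matrix and then invert via $E(v)^{-1}=E(0)E(-v)E(0)$, whereas the paper stops at the lower-triangular ``LQ form'' and converts it to $\diag(-a,-b)\,E(0)\,E(u/b)$ afterwards.
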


For brevity, set $  \mathbf{E}_{(\beta_1, \beta_2)}( u_1,\dots,u_k):= \diag(\beta_1,\beta_2)\,\prod_{i=1}^k E(u_i)$. By Lemma~\ref{shortenlem}, the factorization of $A$ from Theorem~\ref{GL2form} can be shortened in finitely many steps to a reduced expression $ A =\mathbf{E}_{(\omega_1, \omega_2)}(v_1,\dots,u_\ell)$, where $\omega_1,\omega_2\in\C^*$, $v_1(h),v_\ell(h)\in\C[h]$, and $v_i(h)\in\C[h]\setminus\C$ for $1<i<\ell$.  We call this the \emph{standard form} of $A$.

\begin{remark}\label{length-2remark}
If $\mathbf{E}_{(\beta_1, \beta_2)}(u_1,u_2)$ is the standard form of $A \in \GL_2(\C[h])$, then $(u_1,u_2) \neq (0,0)$. For $m \geq n$, set
$$\mathop{\overleftarrow{\prod}}_{i=n}^m E\bigl(u_i(h)\bigr)  :=   E\bigl(u_m(h)\bigr)\,E\bigl(u_{m-1}(h)\bigr)\cdots E\bigl(u_n(h)\bigr).  $$
\end{remark}

\begin{theorem}[\cite{Co}, Theorem~7.1]\label{standardform}
Every $A\in\GL_2(\C[h])$ admits a unique standard form.
\end{theorem}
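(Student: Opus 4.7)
Existence of a standard form is already established by Theorem~\ref{GL2form} together with the shortening process via Lemma~\ref{shortenlem}, so the content of the theorem is uniqueness. The plan is to recover the standard-form data $(\alpha_1, \alpha_2, u_1, \ldots, u_k)$ from the matrix $A \in \GL_2(\C[h])$ by a deterministic procedure, which forces any two standard-form expressions for the same $A$ to agree. I would first record the entries of $A$ in terms of continuant-style polynomials: writing $M_k := \prod_{i=1}^k E(u_i) = \begin{pmatrix} a_k & b_k \\ c_k & d_k \end{pmatrix}$, a direct recursion gives $a_j = u_j a_{j-1} - a_{j-2}$ and $c_j = u_j c_{j-1} - c_{j-2}$ (starting from $a_0 = 1$, $a_1 = u_1$, $c_0 = 0$, $c_1 = -1$), together with $b_k = a_{k-1}$ and $d_k = c_{k-1}$, so that
$$A = \begin{pmatrix} \alpha_1 a_k & \alpha_1 a_{k-1} \\ \alpha_2 c_k & \alpha_2 c_{k-1} \end{pmatrix}.$$
Using the standard-form constraint $\deg u_i \geq 1$ for $1 < i < k$, an induction shows that for $k \geq 3$ the sequences $\deg a_j$ and $\deg c_j$ are strictly increasing for $j \geq 2$; in particular $\deg a_{k-2} < \deg a_{k-1}$ and $\deg c_{k-1} \geq 1$.

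This degree bookkeeping lets me read off $k$ from $A$ alone: $k = 0$ iff $A$ is diagonal; $k = 1$ iff $A_{22} = 0$; $k = 2$ iff $A_{22} \in \C^*$ and $A$ is non-diagonal; and $k \geq 3$ iff $\deg A_{22} \geq 1$. Thus $k$ is an invariant of $A$, and I can induct on $k$ to recover the rest of the data. For the base cases $k \in \{0, 1, 2\}$, the four entries of $A$ determine $(\alpha_1, \alpha_2, u_1, \ldots, u_k)$ by direct matching, with the constraint $(u_1, u_2) \neq (0, 0)$ from Remark~\ref{length-2remark} handling non-degeneracy at $k = 2$. For the inductive step $k \geq 3$, the identity
$$A_{11} = u_k A_{12} - \alpha_1 a_{k-2}, \qquad \deg(\alpha_1 a_{k-2}) < \deg A_{12},$$
exhibits $u_k$ as the unique polynomial quotient in the Euclidean division of $A_{11}$ by $A_{12}$. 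Hence $u_k$ is determined by $A$; the matrix $A\,E(u_k)^{-1}$ is again in standard form of length $k - 1$, since its new endpoint $u_{k-1}$ was a middle term of the original and is therefore non-constant, so no identity of Lemma~\ref{shortenlem} can shorten it further. The induction hypothesis then closes the argument.

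The main obstacle I anticipate is the case $k = 2$ with $u_1 \in \C$, where $\deg a_{k-1} = 0 = \deg a_{k-2}$ and the Euclidean-division mechanism fails; this is precisely why the small $k$ cases must be treated by direct inspection of the entries rather than by the general inductive step. A secondary, mostly bookkeeping, point is verifying that the peeled tuple $(u_1, \ldots, u_{k-1})$ genuinely satisfies the standard-form conditions at the previous level, which follows because the non-constancy of the retained middle terms $u_2, \ldots, u_{k-2}$ precludes any reduction via Lemma~\ref{shortenlem}.
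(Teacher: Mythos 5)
Your argument is correct, and it is worth noting that the paper does not prove this statement at all: Theorem~\ref{standardform} is imported verbatim from Cohn \cite{Co}, whose Theorem~7.1 is established in the much more general setting of $\GL_2$ over a quasi-free (GE-)ring via a presentation of the group by generators and relations. What you supply instead is a self-contained, elementary uniqueness proof specific to $\C[h]$: the continuant recursions $a_j=u_ja_{j-1}-a_{j-2}$, $c_j=u_jc_{j-1}-c_{j-2}$ are right, the entries of $A=\diag(\alpha_1,\alpha_2)\prod_{i=1}^kE(u_i)$ are correctly identified as $\bigl(\alpha_1a_k\ \ \alpha_1a_{k-1};\ \alpha_2c_k\ \ \alpha_2c_{k-1}\bigr)$, and the non-constancy of the interior $u_i$ does force $\deg a_j$ and $\deg c_j$ to be strictly increasing for $2\le j\le k-1$ (including the degenerate starts $u_1=0$ and $u_1\in\C^*$). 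This makes the length $k$ an invariant readable from $A_{22}$ and the diagonality of $A$, and for $k\ge3$ the identity $A_{11}=u_kA_{12}-\alpha_1a_{k-2}$ with $\deg(\alpha_1a_{k-2})<\deg A_{12}$ does exhibit $u_k$ as the unique Euclidean quotient, so peeling off $E(u_k)$ and inducting (on the length, or on $\deg A_{22}$) closes the argument; the small cases $k\le2$ are indeed where this mechanism fails and direct entry-matching, together with the $(u_1,u_2)\neq(0,0)$ convention of Remark~\ref{length-2remark}, is needed. The trade-off is the expected one: Cohn's route yields the statement for a whole class of rings at once, while your degree/continuant argument is concrete, purely computational, and in the same spirit as the Euclidean-algorithm LQ-form analysis of Proposition~\ref{prop:standard-form}, so it fits the paper's toolkit well.
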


\begin{definition}
Let $K(h)\in \GL_2(\C[h])$. We define the \emph{length} of $K(h)$ (denoted by $\ell(K(h))$) to be the number of factors of the form $E\bigl(u_i(h)\bigr)$ appearing in the standard form of $K(h)$. In particular, if $K(h)=\diag(\beta_1,\beta_2)$ with $\beta_1,\beta_2\in\C^*$, then $\ell(K(h))=0$.
\end{definition}

\subsection{Standard form via a Euclidean Algorithm on the first row}\label{subsec:std-form}

We assume $\deg 0:=-\infty$.

\begin{proposition}\label{prop:standard-form}
Let 
\[
K(h)=\begin{pmatrix}k_{11}&k_{12}\\ k_{21}&k_{22}\end{pmatrix}\in\GL_2(\C[h]).
\]
Then there exist $a,b\in\C^*$ and $u(h), q_{i}(h) \in\C[h]$ with $q_i\notin\C^*$ for every $i>1$, such that
\begin{equation} \label{eq-lq-form}
K(h)\ =\ \begin{pmatrix}a&0\\ u(h)&b\end{pmatrix}\,E\big(q_T\big)\cdots E\big(q_1\big).
\end{equation}
Moreover,  $q_i = (-1)^{i-1}p_i$, for $i=1,...,T$, where $p_1,...,p_T$ are the consecutive quotients obtained via the Euclidean Algorithm applied on $(k_{11}, k_{12})$. In particular, $q_1=0$ if $\deg k_{11}<\deg k_{12}$, and  $T=0$ if $k_{12}=0$.
\end{proposition}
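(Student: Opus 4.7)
The plan is to implement the Euclidean algorithm on the first row $(k_{11},k_{12})$ by repeatedly right-multiplying $K(h)$ by matrices of the form $E(q)^{-1}$, reading off the next quotient at each stage. Since $\det K(h)\in\C^*$, any common divisor of $k_{11}$ and $k_{12}$ divides $\det K(h)$ and is therefore a unit; hence $\gcd(k_{11},k_{12})\in\C^*$ and the Euclidean algorithm on $(k_{11},k_{12})$ terminates after finitely many steps with a nonzero constant remainder. The edge case $k_{12}=0$ gives $T=0$ with $K(h)$ already of the desired form.

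The key computation is
\[
K(h)\,E(q)^{-1}
= K(h)\begin{pmatrix}0&-1\\ 1&q\end{pmatrix}
= \begin{pmatrix}k_{12}&-k_{11}+q\,k_{12}\\ k_{22}&-k_{21}+q\,k_{22}\end{pmatrix},
\]
so right multiplication by $E(q)^{-1}$ sends the first row $(k_{11},k_{12})$ to $(k_{12},\,-k_{11}+q\,k_{12})$. Writing $r_0:=k_{11}$, $r_1:=k_{12}$ and $r_{i-1}=p_i r_i+r_{i+1}$ with $\deg r_{i+1}<\deg r_i$, the choice $q_1=p_1$ produces the new first row $(r_1,-r_2)$, and this single step already dictates the shape of the induction.

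I would then prove by induction on $i\ge 0$ that after right-multiplying by $E(q_1)^{-1}\cdots E(q_i)^{-1}$ the first row has the form $(\varepsilon_i r_i,\delta_i r_{i+1})$ with signs $\varepsilon_i,\delta_i\in\{\pm 1\}$ satisfying $\varepsilon_{i+1}=\delta_i$, $\delta_{i+1}=-\varepsilon_i$, starting from $\varepsilon_0=\delta_0=+1$. Plugging the Euclidean relation $r_{i-1}=p_i r_i+r_{i+1}$ into the row-update formula shows that to make the second entry equal to $\pm r_{i+1}$ one must take $q_i=\varepsilon_{i-1}\delta_{i-1}\,p_i$, and a quick induction gives $\varepsilon_{i-1}\delta_{i-1}=(-1)^{i-1}$, so $q_i=(-1)^{i-1}p_i$ as claimed. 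After $T$ steps the first row becomes $(\pm r_T,0)$ with $r_T\in\C^*$, so the matrix is lower triangular with a unit in the $(1,1)$-slot, and its determinant $ab$ equals $\det K(h)\in\C^*$, forcing $b\in\C^*$ as well.

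Inverting the product of the $E(q_i)^{-1}$ factors then yields the stated factorization \eqref{eq-lq-form}. The non-constancy condition $q_i\notin\C^*$ for $i>1$ follows from $\deg p_i=\deg r_{i-1}-\deg r_i\ge 1$ for $i\ge 2$, while $\deg k_{11}<\deg k_{12}$ corresponds precisely to $p_1=0$, whence $q_1=0$. The only real obstacle is bookkeeping the alternating signs $\varepsilon_i,\delta_i$ so that the identity $q_i=(-1)^{i-1}p_i$ drops out cleanly; beyond this, the argument is a routine translation of the scalar Euclidean algorithm into a sequence of matrix multiplications.
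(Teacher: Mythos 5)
Your proposal is correct and follows essentially the same route as the paper: peel off $E(q)^{-1}$ factors on the right so that the first row undergoes the Euclidean algorithm, conclude termination from the strictly decreasing degree of the second entry, and read off $a,b\in\C^*$ from $\det K(h)\in\C^*$. The only difference is cosmetic—you make the sign bookkeeping $q_i=(-1)^{i-1}p_i$ explicit via the $(\varepsilon_i,\delta_i)$ recursion, whereas the paper leaves it implicit in the update $k^{(t+1)}_{12}=-r_t$.
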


\begin{proof}
Set $K^{(1)}:=K$ and write
\[
K^{(t)}=\begin{pmatrix}k^{(t)}_{11}&k^{(t)}_{12}\\ k^{(t)}_{21}&k^{(t)}_{22}\end{pmatrix}\qquad (t\ge1).
\]

While $k^{(t)}_{12}\neq 0$, perform Euclidean division
\[
k^{(t)}_{11}=q_t\,k^{(t)}_{12}+r_t,\qquad \deg r_t<\deg k^{(t)}_{12},
\]
and set
\[
K^{(t+1)}:=
\begin{pmatrix}
k^{(t)}_{12} & -\,k^{(t)}_{11}+q_t\,k^{(t)}_{12}\\[2pt]
k^{(t)}_{22} & -\,k^{(t)}_{21}+q_t\,k^{(t)}_{22}
\end{pmatrix}.
\]

Then
\[
K^{(t+1)}\,E(q_t)
=\begin{pmatrix}
q_t k^{(t)}_{12}+r_t & k^{(t)}_{12}\\
k^{(t)}_{21} & k^{(t)}_{22}
\end{pmatrix}
=K^{(t)}.
\]

Moreover, $k^{(t+1)}_{11}=k^{(t)}_{12}$ and $k^{(t+1)}_{12}=-r_t$, hence
$\deg k^{(t+1)}_{12}<\deg k^{(t)}_{12}$, so the process terminates after finitely many steps at some $T$ with
$k^{(T+1)}_{12}=0$. Thus
\[
K=K^{(T+1)}\,E(q_T)\cdots E(q_1).
\]

Write $K^{(T+1)}=\begin{psmallmatrix}A&0\\ U&B\end{psmallmatrix}$ with $A,B,U\in\C[h]$. Since 
$\det K=\det K^{(T+1)}=AB\in\C^*$, we have $A,B\in\C^*$. Setting $a:=A$, $b:=B$, $u:=U$ gives the factorization.

Finally, if $\deg k_{11}<\deg k_{12}$ then $q_1=0$ by definition; if $k_{12}=0$ then $T=0$ and 
$K=\begin{psmallmatrix}k_{11}&0\\ k_{21}&k_{22}\end{psmallmatrix}$ with $k_{11},k_{22}\in\C^*$.
For $i>1$, we have $\deg k^{(i)}_{11}>\deg k^{(i)}_{12}$ (since for $t \geq 2$, $k^{(t)}_{11}=k^{(t-1)}_{12}$ and $\deg k^{(t)}_{12}<\deg k^{(t-1)}_{12}$), hence $\deg q_i\ge1$, equivalently, $q_i\notin\C^*$.
\end{proof}

\begin{proposition}
The form of $K(h)$ in \eqref{eq-lq-form} is unique. Namely,
assume
\[
K(h)\;=\;\begin{pmatrix}a&0\\ u&b\end{pmatrix}E(q_T)\cdots E(q_1)
\;=\;\begin{pmatrix}\tilde a&0\\ \tilde u&\tilde b\end{pmatrix}E(\tilde q_{\tilde T})\cdots E(\tilde q_1),
\]
where $a,b,\tilde a,\tilde b\in\C^*$, $u,\tilde u\in\C[h]$, and $q_i,\tilde q_j\in\C[h]$ with
$q_i\notin\C^*$ for every $i>1$ and $\tilde q_j\notin\C^*$ for every $j>1$. Then $a=\tilde a$, $b=\tilde b$, $u= \tilde u$, $T= \tilde T$, and $q_i= \tilde q_i$ for all $1\leq i \leq T$.
\end{proposition}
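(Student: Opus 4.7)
The plan is to recover the data $(T,q_T,\ldots,q_1)$ from $K$ by running the Euclidean algorithm on the first row of $K$, and then to determine $M_0$ from $M_0=K\,\bigl(E(q_T)\cdots E(q_1)\bigr)^{-1}$. Since the Euclidean algorithm on $(k_{11},k_{12})$ is deterministic, this forces any two factorizations of the stated form to coincide.

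First, I would set $(k^{(j)}_{11},k^{(j)}_{12})$ to be the first row of $M_0\,E(q_T)\cdots E(q_j)$ for $j=T+1,T,\ldots,1$, with the convention that the empty product case gives $(k^{(T+1)}_{11},k^{(T+1)}_{12})=(a,0)$. A direct computation of right-multiplication by $E(q_{j-1})$ yields the recursion
\[
k^{(j-1)}_{11}=q_{j-1}\,k^{(j-1)}_{12}+\bigl(-k^{(j)}_{12}\bigr),\qquad k^{(j-1)}_{12}=k^{(j)}_{11}.
\]

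Next, using the hypothesis that $q_i$ is nonconstant for every $i\ge2$, a downward induction on $j$ yields $\deg k^{(j)}_{11}=\deg q_j+\deg q_{j+1}+\cdots+\deg q_T$ and $\deg k^{(j)}_{12}=\deg q_{j+1}+\cdots+\deg q_T$ for every $j\ge 2$, and in particular $\deg k^{(j)}_{11}>\deg k^{(j)}_{12}$. Therefore each recursion step with $j\ge 2$ is precisely the unique Euclidean division of $k^{(j-1)}_{11}$ by $k^{(j-1)}_{12}$, with quotient $q_{j-1}$ and remainder $-k^{(j)}_{12}$. The final step $j=2\to 1$ requires a brief case analysis according to whether $\deg q_1\ge1$, $q_1\in\C^*$ (so that $\deg k^{(1)}_{11}=\deg k^{(1)}_{12}$ and $q_1$ is forced as the ratio of leading coefficients), or $q_1=0$ (so that $\deg k^{(1)}_{11}<\deg k^{(1)}_{12}$ and the quotient is forced to be $0$). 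In every case the relation $k^{(1)}_{11}=q_1\,k^{(1)}_{12}+(-k^{(2)}_{12})$ remains the unique Euclidean division of $k^{(1)}_{11}$ by $k^{(1)}_{12}$, since $\deg(-k^{(2)}_{12})<\deg k^{(1)}_{12}$ from the previous step. The edge cases $T=0$ (where $k_{12}=0$ and $K=M_0$ directly) and $T=1$ are handled separately.

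Applying this analysis to both factorizations shows that $(q_1,\ldots,q_T)$ and $(\tilde q_1,\ldots,\tilde q_{\tilde T})$ each coincide with the unique sequence of quotients produced by iterating polynomial division on $(k_{11},k_{12})$, the process terminating exactly when the second entry becomes $0$. Hence $T=\tilde T$ and $q_i=\tilde q_i$ for every $i$, and right-multiplication by $\bigl(E(q_T)\cdots E(q_1)\bigr)^{-1}$ yields $M_0=\tilde M_0$, i.e.\ $a=\tilde a$, $b=\tilde b$, and $u=\tilde u$. The main obstacle I expect is the first-step case analysis, where the asymmetry of the hypothesis—only $q_i$ for $i>1$ is required to be nonconstant—must be used carefully; in particular, one must verify that the Euclidean division of $k^{(1)}_{11}$ by $k^{(1)}_{12}$ unambiguously recovers $q_1$ even in the degenerate cases $q_1\in\C$.
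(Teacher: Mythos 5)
Your proposal is correct, but it proves uniqueness by a genuinely different route from the paper. The paper's proof is a cancellation argument inside $\GL_2(\C[h])$: it multiplies one factorization by the inverse of the other, reduces the resulting word in the $E(\cdot)$'s using Lemma~\ref{shortenlem}, and derives a contradiction with the uniqueness of Cohn's standard form (Theorem~\ref{standardform}) whenever some $q_i\neq\tilde q_i$ or $T\neq\tilde T$, peeling off one factor at a time. You instead recover the data $(T,q_T,\dots,q_1)$ intrinsically from $K$ by showing that the first row of $K=M_0E(q_T)\cdots E(q_1)$ forces each relation $k^{(t)}_{11}=q_t\,k^{(t)}_{12}+(-k^{(t+1)}_{12})$ to be the (unique) Euclidean division, thanks to the degree count $\deg k^{(j)}_{11}=\sum_{i\ge j}\deg q_i>\deg k^{(j)}_{12}=\sum_{i\ge j+1}\deg q_i$ for $j\ge 2$; then $M_0$ is determined by right-cancellation. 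Your argument is more elementary and self-contained (it does not invoke Cohn's Theorem~7.1, and it directly explains the name ``LQ form'' by exhibiting the quotients as the Euclidean quotients of $(k_{11},k_{12})$, in the spirit of Proposition~\ref{prop:standard-form}); the paper's argument buys consistency with the standard-form machinery it uses throughout Section~3. Two small remarks: your separate case analysis at the step $j=2\to1$ is unnecessary, since Euclidean division by a nonzero polynomial is unique whatever the quotient is, given that $\deg(-k^{(2)}_{12})<\deg k^{(1)}_{12}$ is already established; and note that you (correctly) use the hypothesis in the form ``$q_i$ nonconstant for $i>1$,'' i.e.\ $q_i\notin\C$, which is the intended reading of the stated condition $q_i\notin\C^*$ --- under the literal reading the statement would fail (e.g.\ $E(u)E(0)E(v)=-E(u+v)$), and the paper's own proof needs the same strengthening.
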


\begin{proof}
If $\tilde T = 0$, then the identity $\begin{pmatrix}a&0\\ u&b\end{pmatrix}E(q_T)\cdots E(q_1) = \begin{pmatrix}\tilde a&0\\ \tilde u&\tilde b\end{pmatrix}$ implies
$$\I_2 = \tfrac{1}{\tilde a \tilde b}\begin{pmatrix}\tilde b a& 0 \\-\tilde u a + \tilde a u & \tilde a b \end{pmatrix}E(q_T)\cdots E(q_1)= \tfrac{1}{\tilde a \tilde b} \diag(-\tilde b a,-\tilde a b) E(0) E(\tfrac{-\tilde u a + \tilde a u}{\tilde a b})E(q_T)\cdots E(q_1).$$

Hence, if $T  \geq 1$, the identity matrix $\I_2$ would admit a standard form involving a factor $E(w)$ with $w \in \C[h]$, which is a contradiction. Thus $T = 0$, and it follows that $a = \tilde a$, $b = \tilde b$, and $u = \tilde u$.

It remains to consider the case where $T \geq 1$ and $\tilde T \geq 1$. From 
\[\begin{pmatrix}a&0\\ u&b\end{pmatrix}E(q_T)\cdots E(q_1)
\;=\;\begin{pmatrix}\tilde a&0\\ \tilde u&\tilde b\end{pmatrix}E(\tilde q_{\tilde T})\cdots E(\tilde q_1),
\]
it follows that
\begin{align*}
\I_2 &= (-1)^{\tilde T-1} \tfrac{1}{\tilde a \tilde b}\begin{pmatrix}\tilde b a& 0 \\-\tilde u a + \tilde a u & \tilde a b \end{pmatrix}E(q_T)\cdots E(q_1) E(0) E(-\tilde q_1)\cdots E(-\tilde q_{\tilde T}) E(0)\\
&=(-1)^{\tilde T} \tfrac{1}{\tilde a \tilde b}\begin{pmatrix}\tilde b a& 0 \\-\tilde u a + \tilde a u & \tilde a b \end{pmatrix}E(q_T)\cdots E(q_2)E(q_1-\tilde q_1)E(-\tilde q_2)\cdots E(-\tilde q_{\tilde T}) E(0).
\end{align*}

If $q_1 \neq \tilde q_1$, then either $q_1 - \tilde q_1 \in \C[h]\setminus \C$ or $q_1 - \tilde q_1 \in \C^*$. If $\tilde T \geq 2$, then $E(-\tilde q_{\tilde T})$ appears in the standard form of $\I_2$, which contradicts Theorem~\ref{standardform}. If $T \geq 2$, then either $E(q_1-\tilde q_1)$ appears in the standard form of $\I_2$ (i.e. when $q_1 - \tilde q_1 \in \C[h]\setminus \C$), or, in the case $q_1 - \tilde q_1 \in \C^*$, we may apply Lemma~\ref{shortenlem} parts (iii) and (vi) to bring the product into standard form, yielding a contradiction. If $T= \tilde T=1$, then  
$$\begin{pmatrix} aq_1 & a \\ uq_1-b & u \end{pmatrix}= \begin{pmatrix}\tilde a\tilde q_1 & \tilde a \\ \tilde u\tilde q_1-\tilde b & \tilde u \end{pmatrix} \quad \text{and hence} \quad q_1 =  \tilde q_1.$$

 Therefore, $q_1 = \tilde q_1$. Assume $T > \tilde T$ and apply the above argument inductively, we obtain $q_i = \tilde q_i$ for all $1\leq i \leq \min\{ T, \tilde T\} = \tilde T$. Hence
 $$ \I_2 = \tfrac{1}{\tilde a \tilde b}\begin{pmatrix}\tilde b a& 0 \\-\tilde u a + \tilde a u & \tilde a b \end{pmatrix}E(q_T)\cdots E(q_{T- \tilde T})= \tfrac{1}{\tilde a \tilde b} \diag(-\tilde b a,-\tilde a b) E(0) E(\tfrac{-\tilde u a + \tilde a u}{\tilde a b})E(q_T)\cdots E(q_{T- \tilde T}).$$
 
 Using the same argument as in the case $\tilde T = 0$, we obtain a contradiction. Thus $T = \tilde T$ and $q_i = \tilde q_i$ for all $1 \le i \le T$. Consequently,
$$\begin{pmatrix}a&0\\ u&b\end{pmatrix} = \begin{pmatrix}\tilde a&0\\ \tilde u&\tilde b\end{pmatrix},$$
which proves the lemma.
\end{proof}

We will call the decomposition \eqref{eq-lq-form} the \emph{LQ form of $K(h)$}.

\begin{remark}[Transition from LQ form to Standard Form]
By Proposition~\ref{prop:standard-form},
\[
K(h)\;=\;\begin{pmatrix}a&0\\ u(h)&b\end{pmatrix} \, E\big(q_T\big)\cdots E\big(q_1\big),
\qquad a,b\in\C^*,\ u\in\C[h],
\]
with $q_i \notin \C^*$ for $i>1$.  Rewrite the lower–triangular front factor as
\[
\begin{pmatrix}a&0\\ u(h)&b\end{pmatrix}
=\diag(-a,-b)\,E(0)\,E\Big(\tfrac{u(h)}{b}\Big).
\]

If $T=0$ and $u \neq 0$, the above is the normal form of $K(h) = \begin{pmatrix}a&0\\ u(h)&b\end{pmatrix}$. In this case $\ell(K(h))= 2$.

If $u=0$, then the standard form of $K(h)$ is $ \mathbf{E}_{(a, b)}( q_T,q_{T-1},\dots,q_1)$, and hence $\ell(K(h)) = T$.

If $T\geq 1$ and  $\frac{u(h)}{b}\notin\C$, then, since the product of $E(q_i)$ has no interior constants $q_i$,  the resulting product is already in the desired standard form.  In this case $\ell (K(h)) = T+2$.

If $T\geq 1$ and $\frac{u(h)}{b}=\beta \in\C^*$, by  Lemma~\ref{shortenlem} we have
$$\begin{aligned}
E(0)\,E\left(\beta\right)E\big(q_T(h)\big) &= E\left(-\beta^{-1} \right) \diag\left(\beta, \beta^{-1}\right) E\big(q_T(h)-\beta^{-1}\big) \\
&= \diag\left(\beta^{-1}, \beta\right) E(-\beta)E\big(q_T(h)-\beta^{-1}\big).
\end{aligned}$$

This combined with $E\big(q_{T-1}\big)\cdots E\big(q_1\big)$ equals the normal form of $K(h)$, and in this case $\ell (K(h)) = T+1$.
\end{remark}

\begin{example} \label{ex-lq-form}
\begin{itemize}
\item[(i)] The following
$$
K_1(h) = \begin{pmatrix}-h^4-2h & -h^3-1 \\ -h^6-h^3+1 & h^5 \end{pmatrix} = \begin{pmatrix}1 & 0 \\ h^2 & 1 \end{pmatrix} E(h)E(-h^2)E(h)$$ is the LQ form of $K_1(h)$, because after applying the Euclidean algorithm to the pair $(-h^4-2h,-h^3-1)$, we produce the three quotients $h, h^2, h$. The standard form of $K_1(h)$ is  
$$K_1(h) =\diag(-1,-1)E(0)E(h^2) E(h)E(-h^2)E(h).$$
\item[(ii)] The following
\[
K_2(h)=\begin{pmatrix}-2h & -2h^3-2h-2\\ -h+3 & -h^3+3h^2-h+2\end{pmatrix} =\;\begin{pmatrix}2&0\\ 1&3\end{pmatrix}E(h)E(-h^2-1)E(0)
\]
is the LQ form of $K_2(h)$, because after applying the Euclidean algorithm to the pair $(-2h,-2h^3-2h-2)$, we produce the three quotients $0, h^2+1,h$.  The standard form of $K_2(h)$ is
$$K_2(h)= \diag(-6,-1)E\left(-\tfrac{1}{3} \right)E(h-3)E(-h^2-1)E(0).$$

\end{itemize}
\end{example}

\section{The Set $\mathcal S$} \label{setS}
This section provides an alternative characterization of $\mathcal S$ from Definition~\ref{1st-definitionofS}, a key parameter in the classification of simple rank-2 scalar-type modules (Corollary \ref{meaningofS}). In particular, we give an explicit formula for matrices in $\mathcal S$, up to $\sigma^{-1}$-similarity. We begin with a standard lemma that will be used in several places.

\begin{lemma}\label{surj-lem-ab}
Let $a,b\in\C^*$. Define
\[
T_{a,b}:\C[h]\longrightarrow\C[h],\qquad
T_{a,b}\bigl(w(h)\bigr)=a\,w(h+1)-b\,w(h).
\]
Then $T_{a,b}$ is a surjective linear map.
\end{lemma}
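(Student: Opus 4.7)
The map $T_{a,b}$ is manifestly $\C$-linear, so only surjectivity is at issue. The plan is to analyze the effect of $T_{a,b}$ on the degree filtration $\C[h]_{\le n}$, splitting into two cases according to whether $a=b$ or $a\ne b$, and to peel off the leading coefficient by induction on degree.

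First, I would compute the leading term of $T_{a,b}(h^n)$. Since $a(h+1)^n-bh^n=(a-b)h^n+(\text{terms of degree}<n)$, two distinct behaviors arise. If $a\ne b$, then $T_{a,b}$ sends a polynomial of degree $n$ with leading coefficient $\lambda$ to one of degree $n$ with leading coefficient $(a-b)\lambda$; hence $T_{a,b}$ restricts to an endomorphism of $\C[h]_{\le n}$ that is upper triangular in the monomial basis with nonzero diagonal entries $a-b$, and is therefore bijective on each $\C[h]_{\le n}$. Taking the union over $n$ gives surjectivity (indeed, bijectivity) on all of $\C[h]$.

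If instead $a=b$, then $T_{a,a}(w)=a\bigl(w(h+1)-w(h)\bigr)$ is a nonzero scalar multiple of the forward difference operator $\Delta$. Here $T_{a,a}(h^{n+1})=a\bigl((h+1)^{n+1}-h^{n+1}\bigr)=a(n+1)h^n+(\text{terms of degree}<n)$, so $T_{a,a}$ maps $\C[h]_{\le n+1}$ into $\C[h]_{\le n}$ with kernel equal to the constants. By rank–nullity, the image has dimension $n+1$, so $T_{a,a}$ surjects onto $\C[h]_{\le n}$; letting $n\to\infty$ yields surjectivity onto $\C[h]$.

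Either case can equivalently be phrased as a one-line induction: given a target $p(h)$ of degree $n$ with leading coefficient $\lambda$, choose $w_0(h)$ to be a suitable scalar multiple of $h^n$ (when $a\ne b$) or of $h^{n+1}$ (when $a=b$) so that $T_{a,b}(w_0)$ matches the leading term of $p$; then $p-T_{a,b}(w_0)$ has strictly smaller degree and is handled by the inductive hypothesis. There is no real obstacle beyond remembering to treat the case $a=b$ separately, since in that case $T_{a,b}$ strictly decreases degree and one must pass to one degree higher to produce a preimage.
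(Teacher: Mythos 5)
Your proof is correct and takes essentially the same route as the paper's: both split into the cases $a\ne b$ (where $T_{a,b}$ is triangular with nonzero diagonal $a-b$ on each $\C[h]_{\le n}$, hence bijective there) and $a=b$ (where the operator drops degree by exactly one, so one passes to degree $n+1$ to produce a preimage of something of degree $n$). The only cosmetic difference is that the paper first rescales to $S_c=T_{1,c}$ and works in the binomial basis $\binom{h}{k}$, which turns the degenerate case into the explicit shift operator $S_1(e_k)=e_{k-1}$, whereas you use the monomial basis together with a rank--nullity count.
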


\begin{proof}  Since $T_{a,b} = aT_{1,b/a}$, it is enough to show that $S_c := T_{1,c}$ is surjective for every
$c\in\C^*$. Consider the basis of  $\C[h]$ consisting of the  binomial polynomials
\[
e_k(h):=\binom{h}{k}\qquad(k\ge 0).
\]

Using the identity $\binom{h+1}{k}=\binom{h}{k}+\binom{h}{k-1}$, we
obtain
\[
S_c(e_k)(h)
=
e_k(h+1) - c\,e_k(h)
=
\bigl(1-c\bigr)e_k(h) + e_{k-1}(h),
\]
with the convention $e_{-1}:=0$.

\smallskip\noindent
\textbf{Case 1: $c\neq 1$.}
In the basis $\{e_0,e_1,e_2,\dots\}$ the operator $S_c$ has an
upper-triangular matrix with diagonal entries $1-c\neq 0$. Hence $S_c$
is invertible, in particular surjective.

\smallskip\noindent
\textbf{Case 2: $c=1$.}
Then $S_1(e_k)=e_{k-1}$ for all $k\ge 0$. Thus $S_1$ is the (right)
shift operator on the  binomial basis:
\[
S_1(e_0)=0,\quad S_1(e_1)=e_0,\quad S_1(e_2)=e_1,\ \dots
\]

This easily implies the surjectivity of $S_1$.
\end{proof}

\begin{lemma} \label{lemsiginv}
Let $a,b\in\C^*$ and $u(h)\in\C[h]$. Then 
\[
\begin{pmatrix} a & u(h) \\0 & b \end{pmatrix}
\in \Orb_{\sigma^{-1}}\big(\diag(a,b)\big).
\]
\end{lemma}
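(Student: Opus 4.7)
The plan is to realize the $\sigma^{-1}$-conjugation explicitly using a unipotent upper-triangular conjugator. Since $\diag(a,b)$ and the target matrix differ only in the $(1,2)$ entry, it is natural to try
\[
P(h)=\begin{pmatrix}1 & w(h)\\ 0 & 1\end{pmatrix}
\]
for some $w(h)\in\C[h]$ to be determined. Such a $P(h)$ is in $\GL_2(\C[h])$ with inverse $\begin{psmallmatrix}1 & -w(h)\\ 0 & 1\end{psmallmatrix}$.

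A direct matrix multiplication will give
\[
P(h)^{-1}\diag(a,b)\,P(h+1)
=\begin{pmatrix} a & a\,w(h+1)-b\,w(h)\\ 0 & b\end{pmatrix}.
\]
Thus the required equality $P(h)^{-1}\diag(a,b)P(h+1)=\begin{psmallmatrix}a & u(h)\\ 0 & b\end{psmallmatrix}$ is equivalent to solving the functional equation
\[
a\,w(h+1)-b\,w(h)=u(h),
\]
i.e. $T_{a,b}(w(h))=u(h)$ in the notation of Lemma~\ref{surj-lem-ab}.

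The key (and only nontrivial) step is the existence of such a $w(h)\in\C[h]$, and this is exactly the surjectivity of $T_{a,b}$ established in Lemma~\ref{surj-lem-ab}. So the proof reduces to: (i) choose $w(h)\in\C[h]$ with $T_{a,b}(w)=u$ via Lemma~\ref{surj-lem-ab}; (ii) take $P(h)=\begin{psmallmatrix}1 & w(h)\\ 0 & 1\end{psmallmatrix}\in\GL_2(\C[h])$; (iii) verify by direct computation that $P(h)^{-1}\diag(a,b)P(h+1)=\begin{psmallmatrix}a & u(h)\\ 0 & b\end{psmallmatrix}$, which places the matrix in $\Orb_{\sigma^{-1}}(\diag(a,b))$. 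There is no real obstacle here—the whole point of the preceding Lemma~\ref{surj-lem-ab} was to make this cohomological vanishing statement automatic.
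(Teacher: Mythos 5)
Your proof is correct and follows essentially the same route as the paper: conjugation by a unipotent upper-triangular matrix, reducing the claim to the surjectivity of $T_{a,b}$ from Lemma~\ref{surj-lem-ab}. The only (immaterial) difference is that the paper conjugates the upper-triangular matrix down to $\diag(a,b)$ by solving $T_{a,b}(v)=-u$, while you conjugate $\diag(a,b)$ up to the target by solving $T_{a,b}(w)=u$, which matches the orbit definition even more directly.
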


\begin{proof}
For $v(h)\in\C[h]$, set $P(h):=
\begin{pmatrix}1&v(h)\\0&1\end{pmatrix}\in\GL_2(\C[h])$.
Then
\[
P(h)^{-1}
\begin{pmatrix}a & u(h)\\0 & b\end{pmatrix}
P(h+1)
=
\begin{pmatrix}
a & a\,v(h+1)-b\,v(h)+u(h)\\
0 & b
\end{pmatrix}.
\]

We use Lemma \ref{surj-lem-ab}, choose $v(h)$ such that $T_{a,b}(v) = -u$. This completes the proof. 
\end{proof}

\begin{remark} \label{constantmatrices}
Any matrix in $\GL_2(\C)$ (i.e.\ with constant entries) is $\sigma^{-1}$-similar to a diagonal matrix.
\end{remark}

\begin{corollary} \label{meaningofS}
Let $\alpha\in\C$, $K(h)\in\GL_2(\C[h])$, and 
${\bf a}\in\{(2,0,0),(0,2,0),(0,0,2)\}$.  
Then $M\left(\alpha, \mathbf a, K(h)\right)$ has no rank-$1$ submodules 
precisely when $K(h)\in\mathcal S$.
\end{corollary}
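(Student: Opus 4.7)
The plan is to deduce this corollary directly from the two preceding lemmas and the definition of $\mathcal S$, using only transitivity of $\sim_{\sigma^{-1}}$.

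First, I would invoke Lemma~\ref{lemsiginv2}: under the scalar-type hypothesis on $\mathbf a$, the module $M(\alpha,\mathbf a,K(h))$ has a rank-$1$ submodule if and only if
\[
K(h)\ \sim_{\sigma^{-1}}\ \begin{pmatrix} a & u(h)\\ 0 & b \end{pmatrix}
\]
for some $a,b\in\C^*$ and some $u(h)\in\C[h]$.

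Next, I would combine this with Lemma~\ref{lemsiginv}, which says that every such upper-triangular matrix with constant diagonal is itself $\sigma^{-1}$-similar to $\diag(a,b)$. Since $\sim_{\sigma^{-1}}$ is an equivalence relation (noted in the remark after Definition~\ref{sigmadef}), transitivity yields
\[
K(h)\ \sim_{\sigma^{-1}}\ \begin{pmatrix} a & u(h)\\ 0 & b \end{pmatrix}
\ \Longleftrightarrow\
K(h)\ \sim_{\sigma^{-1}}\ \diag(a,b).
\]
The forward direction is the transitivity statement just made; the reverse direction is trivial since $\diag(a,b)$ itself is of the required upper-triangular shape (with $u(h)=0$).

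Finally, I would unwind the definition of $\mathcal S$. The existence of some pair $(a,b)\in(\C^*)^2$ with $K(h)\in\Orb_{\sigma^{-1}}(\diag(a,b))$ is exactly the statement that $K(h)\in\bigcup_{a,b\in\C^*}\Orb_{\sigma^{-1}}(\diag(a,b))$, i.e.\ that $K(h)\notin\mathcal S$. Taking the contrapositive, $M(\alpha,\mathbf a,K(h))$ has no rank-$1$ submodule if and only if $K(h)\in\mathcal S$, as desired. There is no real obstacle here: the entire content has already been packaged into Lemmas~\ref{lemsiginv2} and~\ref{lemsiginv}, and this corollary is just their formal combination with the definition of $\mathcal S$.
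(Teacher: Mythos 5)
Your proposal is correct and follows exactly the paper's argument: the paper's proof of this corollary is the one-line observation that it is an immediate consequence of Lemma~\ref{lemsiginv2} together with Lemma~\ref{lemsiginv}, and your write-up simply makes the transitivity step and the unwinding of the definition of $\mathcal S$ explicit.
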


\begin{proof}
This is an immediate consequence of Lemma~\ref{lemsiginv2} together with Lemma~\ref{lemsiginv}.
\end{proof}

\begin{corollary}
Let $a,b\in\C^*$, $\alpha\in\C$, and $u(h)\in\C[h]$. Then:
\begin{enumerate}
\item[(i)] 
$M\left(\alpha, (2,0,0), 
\begin{pmatrix} a & u(h) \\ 0 & b \end{pmatrix}\right)
\simeq 
M(\alpha, (1,0,0), a)
\;\oplus\;
M(\alpha, (1,0,0), b),$

\item [(ii)]
$M\left(\alpha, (0,2,0), 
\begin{pmatrix} a & u(h) \\ 0 & b \end{pmatrix}\right)
\simeq 
M(\alpha, (0,1,0), a)
\;\oplus\;
M(\alpha, (0,1,0), b),$

\item [(iii)]
$M\left(\alpha, (0,0,2), 
\begin{pmatrix} a & u(h) \\ 0 & b \end{pmatrix}\right)
\simeq 
M(\alpha, (0,0,1), a)
\;\oplus\;
M(\alpha, (0,0,1), b).$
\end{enumerate}
\end{corollary}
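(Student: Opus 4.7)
The plan is to reduce each of (i), (ii), (iii) to the case where $K(h) = \diag(a,b)$, after which the $\mathfrak{sl}(2)$-action splits along the two standard basis vectors. The three cases are structurally identical and differ only by the scalar polynomial $p(h)$ appearing in $P_{({\bf a},\alpha)}(h) = p(h)\,\I_2$: namely $p(h) = 1$ for ${\bf a} = (2,0,0)$, $p(h) = h-\alpha+1$ for ${\bf a} = (0,2,0)$, and $p(h) = (h-\alpha+1)(h+\alpha)$ for ${\bf a} = (0,0,2)$ (cf.\ Definition~\ref{tripledef}).

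First, I would invoke Lemma~\ref{lemsiginv} to produce $P(h) \in \GL_2(\C[h])$ satisfying
$$
P(h)^{-1}\,\diag(a,b)\,P(h+1) \;=\; \begin{pmatrix} a & u(h) \\ 0 & b \end{pmatrix}.
$$
Since $P_{({\bf a},\alpha)}(h) = p(h)\,\I_2$ is central in $\Mat_2(\C[h])$, multiplying this identity on the left by $p(h)$ yields
$$
P(h)\,P_{({\bf a},\alpha)}(h)\,\begin{pmatrix} a & u(h) \\ 0 & b \end{pmatrix} \;=\; P_{({\bf a},\alpha)}(h)\,\diag(a,b)\,P(h+1),
$$
which is exactly the second bullet of Theorem~\ref{isomorphismthm}; the first bullet is trivial as $\alpha$ and ${\bf a}$ agree on the two sides. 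Hence
$$
M\Bigl(\alpha,\, {\bf a},\, \begin{pmatrix} a & u(h) \\ 0 & b \end{pmatrix}\Bigr) \;\simeq\; M\bigl(\alpha,\, {\bf a},\, \diag(a,b)\bigr).
$$

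It remains to show that $M(\alpha, {\bf a}, \diag(a,b)) \simeq M(\alpha, {\bf a}', a) \oplus M(\alpha, {\bf a}', b)$, where ${\bf a}' \in \{(1,0,0),(0,1,0),(0,0,1)\}$ is the rank-$1$ analogue of ${\bf a}$. For this, I would substitute $K(h) = \diag(a,b)$ and $P_{({\bf a},\alpha)}(h) = p(h)\,\I_2$ into the explicit formulas of Theorem~\ref{U(h)-free-realization} and observe that, because both $K^{-1}$ and $\overline{P}_{({\bf a},\alpha)}$ are diagonal (the latter being scalar), the actions of $e$ and $f$ preserve each coordinate $\C[h]\,e_i$ of the standard basis of $\C[h]^{\oplus 2}$. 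A direct comparison of the induced single-coordinate actions with the rank-$1$ realization of Theorem~\ref{U(h)-free-realization} then identifies $\C[h]\,e_1 \simeq M(\alpha, {\bf a}', a)$ and $\C[h]\,e_2 \simeq M(\alpha, {\bf a}', b)$. There is no real obstacle here: all the content is packaged into Lemma~\ref{lemsiginv} together with the centrality of $p(h)\,\I_2$, after which the direct-sum decomposition is immediate.
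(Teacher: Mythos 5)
Your proposal is correct and follows essentially the same route as the paper: Lemma~\ref{lemsiginv} gives the $\sigma^{-1}$-similarity to $\diag(a,b)$, Theorem~\ref{isomorphismthm} (using that $P_{({\bf a},\alpha)}(h)$ is scalar) upgrades this to a module isomorphism, and the diagonal case splits coordinatewise. The only difference is that you spell out the centrality of $p(h)\,\I_2$ explicitly, which the paper leaves implicit.
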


\begin{proof}
We prove (i); parts (ii)–(iii) are identical.  
By Lemma~\ref{lemsiginv},
\[
\begin{pmatrix} a & u(h) \\ 0 & b \end{pmatrix}
\sim_{\sigma^{-1}} \diag(a,b).
\]

Hence, by Theorem~\ref{isomorphismthm},
\[
M\left(\alpha, (2,0,0), 
\begin{pmatrix} a & u(h) \\ 0 & b \end{pmatrix}\right)
\simeq 
M\left(\alpha, (2,0,0), \diag(a,b)\right).
\]

Finally,
\[
M\left(\alpha, (2,0,0), \diag(a,b)\right)
\simeq 
M(\alpha, (1,0,0), a)
\;\oplus\;
M(\alpha, (1,0,0), b),
\]
which proves (i).
\end{proof}

\begin{lemma} \label{lengthlemma}
For $k \in \Z_{\geq 1}$, let  $u_1,\dots,u_k\in\C[h]\setminus\C$. Let 
$$P(h)=\mathbf E_{(1,1)}(v_1,\dots,v_p)$$ 
be in standard form, where $v_i\in\C[h]$ for all $i$, $v_i\notin\C$ for $1<i<p$, and $p \in \Z_{\geq1}$. Then the following hold.
\begin{enumerate}
\item[(i)] If $v_1\in\C^*$, then
$$\ell\Big(P(h)^{-1}\,\mathbf{E}_{(1,1)}(u_1,\dots,u_k)\,P(h+1)\Big)= \begin{cases} k+2 &\text{if}\quad p=1,\\
k+2p &\text{if}\quad p>1,\, v_p \in \C[h]\setminus \C,\\
k+2p-1 &\text{if}\quad p>1,\, v_p \in \C^*,\\
k+2p-2 &\text{if}\quad p>1,\, v_p =0. \end{cases}$$
\item[(ii)] If $v_1\in\C[h] \setminus \C$, and $u_1 - v_1 \in \C[h] \setminus \C$, then
$$\ell\Big(P(h)^{-1}\,\mathbf{E}_{(1,1)}(u_1,\dots,u_k)\,P(h+1)\Big)= \begin{cases} k+2p &\text{if}\quad p \geq 1,\, v_p \in \C[h]\setminus \C,\\
k+2p-1 &\text{if}\quad p>1,\, v_p \in \C^*,\\
k+2p-2 &\text{if}\quad p>1,\, v_p =0. \end{cases}$$
\item[(iii)] If $v_1\in\C[h] \setminus \C$ and $u_1 - v_1 \in \C^*$, then
$$\ell\Big(P(h)^{-1}\,\mathbf{E}_{(1,1)}(u_1,\dots,u_k)\,P(h+1)\Big)= \begin{cases} k+2p-1 &\text{if}\quad p \geq 1,\, v_p \in \C[h]\setminus \C,\\
k+1, &\text{if}\quad p=2,\, v_2=(v_1 - u_1)^{-1},\\
k+2, &\text{if}\quad p=2,\, v_2 \in \C, v_2\neq (v_1 - u_1)^{-1},\\
k+2p-2 &\text{if}\quad p>2,\, v_p \in \C^*,\\
k+2p-3 &\text{if}\quad p>2,\, v_p =0. \end{cases}$$
\end{enumerate}
\end{lemma}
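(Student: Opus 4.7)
The plan is to reduce $R := P(h)^{-1}\mathbf{E}_{(1,1)}(u_1,\dots,u_k)P(h+1)$ to standard form by direct manipulation using the identities in Lemma \ref{shortenlem}, and then count the remaining $E$-factors.

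First I produce a common baseline. By Remark \ref{inverseremark}, $E(v_i)^{-1}=E(0)E(-v_i)E(0)$, and since $E(0)^2=-\I$, telescoping the adjacent copies of $E(0)$ gives
\[
P(h)^{-1}\;=\;(-1)^{p-1}\,E(0)\,E(-v_p(h))\,E(-v_{p-1}(h))\cdots E(-v_1(h))\,E(0).
\]
Applying Lemma \ref{shortenlem}(i) to collapse the boundary triple $E(-v_1(h))E(0)E(u_1)=-E(u_1-v_1(h))$, I obtain
\[
R=(-1)^p\,E(0)\,E(-v_p(h))\cdots E(-v_2(h))\,E(u_1-v_1(h))\,E(u_2)\cdots E(u_k)\,E(v_1(h+1))\cdots E(v_p(h+1)),
\]
a product with exactly $2p+k$ $E$-factors.

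Next I scan for interior positions whose arguments lie in $\C$ and hence violate the standard form. Under the hypotheses of (i)--(iii), the only candidates are the two $v_p$-boundary factors $E(-v_p(h))$ and $E(v_p(h+1))$ (whose type depends on whether $v_p\in\C[h]\setminus\C$, $v_p\in\C^*$, or $v_p=0$), together with $E(u_1-v_1(h))$ (constant precisely in case (iii)) and $E(v_1(h+1))$ (constant precisely in case (i)); every other interior entry is automatically non-constant by the hypothesis that $u_i\notin\C$ and $v_i\notin\C$ for $1<i<p$. I then remove each such constant $\beta\in\C^*$ by Lemma \ref{shortenlem}(vi), which converts $E(u)E(\beta)E(v)$ into $E(u-\beta^{-1})\diag(\beta,\beta^{-1})E(v-\beta^{-1})$, reducing the $E$-count by one and producing a diagonal that is pushed to the leftmost position via Lemma \ref{shortenlem}(iii) and absorbed into the front. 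An interior zero is removed by Lemma \ref{shortenlem}(i), which replaces $E(u)E(0)E(v)$ with $-E(u+v)$ and reduces the $E$-count by two. Adding these contributions case by case yields the claimed lengths.

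The main difficulty is the exceptional sub-case of (iii), where $p=2$ and $v_2=(v_1-u_1)^{-1}$. Here the two interior constants $E(-v_2)$ and $E(u_1-v_1)$ interlock: after applying Lemma \ref{shortenlem}(vi) to $E(0)E(-v_2)E(u_1-v_1)$, the new argument $(u_1-v_1)+v_2^{-1}$ equals $0$ precisely because $v_2^{-1}=-(u_1-v_1)$, producing an interior zero that can then be absorbed by Lemma \ref{shortenlem}(i). This extra collapse accounts for the exceptional value $k+1$, and one must verify that it occurs only when $v_2=(v_1-u_1)^{-1}$ and that no similar coincidence produces additional reductions elsewhere. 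The boundary analysis when $v_p\in\C^*$ or $v_p=0$ requires the same bookkeeping but no new ideas; summing all contributions in each sub-case then matches the stated formulas.
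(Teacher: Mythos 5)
Your overall strategy is the same as the paper's: expand $P(h)^{-1}\mathbf{E}_{(1,1)}(u_1,\dots,u_k)P(h+1)$ into the product $(-1)^p E(0)E(-v_p)\cdots E(-v_2)\,E(u_1-v_1)\,E(u_2)\cdots E(u_k)\,E(v_1(h+1))\cdots E(v_p(h+1))$ of $2p+k$ factors and then eliminate constant interior arguments using Lemma~\ref{shortenlem}(i) and (vi). Your derivation of this baseline is correct, and your treatment of the interlocking constants in case (iii) --- where $E(-v_2)$ and $E(u_1-v_1)$ combine to create an interior zero exactly when $v_2=(v_1-u_1)^{-1}$ --- is the one place where the paper's ``parts (ii)--(iii) are similar'' hides genuine work; your analysis there is what a careful execution requires.

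The problem is the final step. You correctly list $E(v_1(h+1))$ among the constant interior candidates ``precisely in case (i),'' and your own rule says each such nonzero constant costs one unit of length via Lemma~\ref{shortenlem}(vi). For $p>1$ this factor sits strictly inside the product (it is followed by $E(v_2(h+1))$), so it must be eliminated, and carrying out your bookkeeping gives $k+2p-1$, $k+2p-2$, $k+2p-3$ in the three sub-cases of (i) with $p>1$, not the stated $k+2p$, $k+2p-1$, $k+2p-2$. Concretely, take $k=1$, $u_1=h$, $P(h)=E(1)E(h)$ (so $p=2$, $v_1=1$, $v_2=h$): a direct computation gives $P(h)^{-1}E(h)P(h+1)=E(0)E(-h)E(h-2)E(h)$, which is already in standard form and has length $4$, whereas case (i) asserts $k+2p=5$. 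So the sentence ``adding these contributions case by case yields the claimed lengths'' is false as written: either you must explain why $E(v_1(h+1))$ escapes reduction (it does not), or you must record that case (i) with $p>1$ comes out one lower than stated. For what it is worth, the paper's own proof of (i) only discusses the $v_p$-factors and overlooks the same term, so the discrepancy ultimately lies with the statement; but since the entire content of the lemma is the exact count, the summation is the proof and cannot simply be asserted. Your scheme does reproduce the stated values in (ii), in (iii), and in (i) with $p=1$, where $E(v_1(h+1))$ is the last, hence boundary, factor.
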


\begin{proof}
We prove (i); parts (ii)–(iii) are similar. If $p=1$, then
$$P(h)^{-1}\,\mathbf{E}_{(1,1)}(u_1,\dots,u_k)\,P(h+1)
=-\,E(0)\,E(u_1-v_1)\,E(u_2)\cdots E(u_k)\,E(v_1(h+1)).$$

Hence, when $p=1$,
 $$\ell\Big(P(h)^{-1}\,\mathbf{E}_{(1,1)}(u_1,\dots,u_k)\,P(h+1)\Big) =k+2.$$ 
 
Now assume $p >1$. A direct computation gives
\begin{multline*}
P(h)^{-1}\,\mathbf{E}_{(1,1)}(u_1,\dots,u_k)\,P(h+1)=\\
(-1)^p\,E(0)\left(\mathop{\overleftarrow{\prod}}_{j=2}^{\,p} E(-v_j) \right)\,E(u_1-v_1)\,E(u_2)\cdots E(u_k)\,\left(\prod_{m=1}^{\,p} E(v_m(h+1)) \right).
\end{multline*}

If $v_p \in \C[h]\setminus \C$, then $\ell\Big(P(h)^{-1}\,\mathbf{E}_{(1,1)}(u_1,\dots,u_k)\,P(h+1)\Big) =k+2p$. If $v_p \in \C^*$, then by Lemma \ref{shortenlem} parts (iii) and (vi), we deduce that
$$\ell\Big(P(h)^{-1}\,\mathbf{E}_{(1,1)}(u_1,\dots,u_k)\,P(h+1)\Big) =k+2p-1.$$

If $v_p =0$, then
\begin{multline*}
P(h)^{-1}\,\mathbf{E}_{(1,1)}(u_1,\dots,u_k)\,P(h+1)=\\
(-1)^{p-1}\,\left(\mathop{\overleftarrow{\prod}}_{j=2}^{\,p-1} E(-v_j) \right)\,E(u_1-v_1)\,E(u_2)\cdots E(u_k)\,\left(\prod_{m=1}^{\,p-1} E(v_m(h+1)) \right)E(0),
\end{multline*}
which implies 
$$\ell\Big(P(h)^{-1}\,\mathbf{E}_{(1,1)}(u_1,\dots,u_k)\,P(h+1)\Big) =k+2p-2.$$

This completes the proof of part (i) of the lemma.
\end{proof}

For  $u_1,\dots,u_k\in\C[h]\setminus\C$, the length of $\mathbf{E}_{(a,b)}(u_1,\dots,u_k)$ cannot be reduced via $\sigma^{-1}$-similarity; this will be proved in the next proposition.

\begin{proposition} \label{sigmashorten}
Let $a,b\in\C^*$ and $u_1,\dots,u_k\in\C[h]\setminus\C$ with $k \geq 1$. 
Then for every $P(h)\in\GL_2(\C[h])$,
$$
  \ell\Big(P(h)^{-1}\,\mathbf{E}_{(a,b)}(u_1,\dots,u_k)\,P(h+1)\Big)
  \;\geq\;
  \ell\Big(\mathbf{E}_{(a,b)}(u_1,\dots,u_k)\Big).
$$
\end{proposition}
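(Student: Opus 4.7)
The plan is to reduce the statement to Lemma \ref{lengthlemma} by absorbing the constant diagonal prefactors of both $P(h)$ and $\mathbf{E}_{(a,b)}(u_1,\ldots,u_k)$ into the rest of the product. The key observation is that conjugation by a constant diagonal matrix preserves the form $\mathbf{E}_{(\cdot,\cdot)}(u_1,\ldots,u_k)$ with $u_i\in\C[h]\setminus\C$: by Lemma \ref{shortenlem}(iii), such a push-through scales each $u_i$ by $\pm(c_1/c_2)^{\pm 1}$, which remains non-constant.

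First I would dispose of the base case $\ell(P(h))=0$, i.e., $P(h)=\diag(c_1,c_2)$. Repeatedly applying Lemma \ref{shortenlem}(iii) rewrites $P(h)^{-1}\mathbf{E}_{(a,b)}(u_1,\ldots,u_k)P(h+1)$ as $\mathbf{E}_{(a'',b'')}(u_1'',\ldots,u_k'')$ with each $u_i''\in\C[h]\setminus\C$; this is already in standard form, of length $k$. For $\ell(P(h))=p\ge 1$, write $P(h)=\diag(c_1,c_2)\,R(h)$ with $R(h)=\mathbf{E}_{(1,1)}(v_1,\ldots,v_p)$; then
\[
P(h)^{-1}\mathbf{E}_{(a,b)}(u_1,\ldots,u_k)P(h+1)\;=\;R(h)^{-1}\,\widetilde M\,R(h+1),
\]
where $\widetilde M=\diag(c_1,c_2)^{-1}\mathbf{E}_{(a,b)}(u_1,\ldots,u_k)\diag(c_1,c_2)=\mathbf{E}_{(a',b')}(u_1',\ldots,u_k')$ for some $a',b'\in\C^*$ and $u_i'\in\C[h]\setminus\C$, by the same push-through argument. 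This is precisely the setup of Lemma \ref{lengthlemma}, save for the interior prefactor $\diag(a',b')\neq\I$; but this prefactor propagates through any $E(\cdot)$ (at the cost of rescaling the argument by a nonzero scalar, preserving non-constancy) and can be collected into an outer constant diagonal, which does not affect the standard-form length.

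I would then run the same expansion used to prove Lemma \ref{lengthlemma}: substituting $E(v)^{-1}=E(0)E(-v)E(0)$ and telescoping the arising $E(0)^2=-\I$ pairs yields a product of $2p+k$ factors $E(\cdot)$ multiplied by an outer constant diagonal, whose arguments have the same (zero/nonzero-constant/non-constant) pattern as in Lemma \ref{lengthlemma}. In each subcase listed there, the tabulated length is already at least $k+1$. The remaining edge cases not explicitly enumerated in Lemma \ref{lengthlemma} (namely $v_1=0$, or $v_1\in\C[h]\setminus\C$ with $u_1'=v_1$) are handled by the same methodology: the identity $E(u)E(0)E(v)=-E(u+v)$ of Lemma \ref{shortenlem}(i) can fire only a bounded number of times, removing at most two factors per occurrence, so the starting count of $2p+k$ with $p\ge 1$ leaves at least $k$ factors in standard form. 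The main obstacle is the combinatorial bookkeeping required in these edge cases: one must enumerate the simultaneous degenerations of $v_1$, $v_p$, and $u_1'-v_1$ and verify, via systematic application of Lemma \ref{shortenlem}(iii)--(vi), that no cascade of cancellations can bring the total below $k$. This is routine but tedious, and serves as the main technical content of the proof.
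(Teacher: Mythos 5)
Your overall route is the same as the paper's: strip the constant diagonal prefactors of both $\mathbf{E}_{(a,b)}(u_1,\dots,u_k)$ and $P(h)$ using Lemma~\ref{shortenlem}(iii), invoke Lemma~\ref{lengthlemma} for the generic positions of $v_1$ (each tabulated length is indeed $\ge k+1$), and then treat the two remaining degenerations $v_1=0$ and $v_1=u_1$ separately. The reductions in your first two paragraphs are fine and match the paper.

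The gap is in your handling of those two remaining cases, which is where the actual content of the proposition lives. Your argument is: the product has $2p+k$ factors $E(\cdot)$, each application of $E(u)E(0)E(v)=-E(u+v)$ removes two factors, and the rule ``can fire only a bounded number of times,'' hence at least $k$ factors survive. This is a non sequitur: to get from $2p+k$ down to at least $k$ you need the number of firings to be at most $p$, and you never establish that bound. Cancellations can cascade --- e.g.\ with $v_1=0$ the factor $E(u_k+v_2(h+1))$ becomes $E(0)$ precisely when $v_2(h)=-u_k(h-1)$, enabling the next firing --- and nothing in your counting rules out more than $p$ such events a priori. (The situation is further complicated by Lemma~\ref{shortenlem}(ii) and (vi), which remove three and one factor respectively, and by the fact that boundary constants are permitted in a standard form.) The paper closes exactly this hole by showing that each successive cancellation \emph{forces} a specific choice of the next $v_j$ (namely $v_2=-u_k(h-1)$, $v_3=-u_{k-1}(h-1),\dots$ in the case $v_1=0$, and $v_{ks+j}=u_j(h+s)$ in the case $v_1=u_1$), so that each firing consumes one factor from the left wing coming from $P(h)^{-1}$ together with one from the right wing coming from $P(h+1)$, the running count never drops below $k$, and the extremal configuration lands at exactly $k$. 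Without this pairing/forcing argument --- or some other invariant bounding the number of reductions by $p$ --- your count does not yield the claimed inequality, so the degenerate cases remain unproved.
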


\begin{proof}
By Lemma~\ref{shortenlem}(iii), the diagonal factor in the standard form of a matrix in $\GL_2(\C[h])$ does not affect the length under $\sigma^{-1}$-similarity. Hence, for computational convenience, it suffices to prove the proposition in the case $a=b=1$. Let $P(h)\in \GL_2(\C[h])$ of standard form
$$
P(h)=\mathbf{E}_{(\beta_1,\beta_2)}(v_1,\dots,v_p),\qquad \beta_1,\beta_2\in\C^*,
$$
with $v_1,v_p\in\C[h]$ and $v_i\in \C[h]\setminus\C$ for $1<i<p$. Since conjugation by a constant diagonal matrix preserves length (Lemma~\ref{shortenlem}(iii)), we also assume $\beta_1= \beta_2 =1$. By Lemma~\ref{lengthlemma}, it remains to consider the cases $v_1=0$ and $v_1=u_1$.
If $v_1= 0$, then
  $$E(v_1)^{-1}\,\mathbf{E}_{(1,1)}(u_1,\dots,u_k)\,E\bigl(v_1(h+1)\bigr)
=-\,E(0)\,E(u_1)\,E(u_2)\cdots E(u_k)\,E(0),$$
and
  \begin{multline*}
  E(v_2)^{-1}E(v_1)^{-1}\,\mathbf{E}_{(1,1)}(u_1,\dots,u_k)\,E\bigl(v_1(h+1)\bigr)\,E\bigl(v_2(h+1)\bigr)\\
=-\,E(0)\,E(-v_2)\,E(u_1)\,E(u_2)\cdots E(u_{k-1})\,E(u_k + v_2(h+1)).
  \end{multline*}
  
If $v_1=0$ and $p\le2$, a direct computation shows that
$$
\ell\Big(P(h)^{-1}\,\mathbf E_{(1,1)}(u_1,\dots,u_k)\,P(h+1)\Big)\geq k+1.
$$

If $p \geq 3$, then
\begin{multline*}
P(h)^{-1}\,\mathbf{E}_{(1,1)}(u_1,\dots,u_k)\,P(h+1)=\\
(-1)^{p+1}\,E(0)\left(\mathop{\overleftarrow{\prod}}_{j=2}^{\,p} E(-v_j) \right)\,E(u_1)\cdots E(u_{k-1})\,E(u_k + v_2(h+1))\left(\prod_{m=3}^{\,p} E(v_m(h+1)) \right).
\end{multline*}

To reach the minimal possible length in the case $p\geq 3$, the interior factors of $P(h)$ must cancel successively with the tail of $\mathbf{E}_{(1,1)}(u_1,\dots,u_k)$; that is, 
  $$v_1(h)=0,\quad v_2(h) = -u_k(h-1),\quad v_3(h) = -u_{k-1}(h-1),\quad \dots,v_p(h)=0.$$ 
  
In explicit terms, $v_1(h) = v_p(h) =0$, and for $2\leq n \leq p-1$, writing $n= ks+r+2$ with $s \geq 0$ and $r \in \{0,\dots, k-1\}$, we set
$$v_{ks+r+2}(h) := -u_{k-r}(h-(s+1)). $$

The resulting product has length $k$.

If $v_1=u_1$, then  
  $$E(v_1)^{-1}\,\mathbf{E}_{(1,1)}(u_1,\dots,u_k)\,E\bigl(v_1(h+1)\bigr)=E(u_2)\cdots E(u_k)E(u_1(h+1)).$$
  
To obtain the minimal length, repeat this process at each stage (i.e., take $v_2(h)=u_2(h)$,$\dots$). More precisely, for $s \geq 0$ and $j \in \{1,\dots,k\}$, set $v_{ks+j}(h):= u_j(h+s)$. The resulting product has length $k$. 
\end{proof}

\begin{corollary}\label{examplecoro}
Let $a,b\in\C^*$, $k\in \Z_{\geq 1}$, and let $u_1,\dots,u_k\in\C[h]\setminus\C$. Then
$$
\mathbf{E}_{(a,b)}(u_1,\dots,u_k)\in\mathcal S.
$$
\end{corollary}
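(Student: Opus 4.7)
The plan is to reduce the statement directly to Proposition \ref{sigmashorten} via a length comparison in Cohn's standard form.

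First, I would observe that, under the hypotheses, the expression $\mathbf{E}_{(a,b)}(u_1,\dots,u_k)$ is \emph{already} in standard form. Indeed, $a,b\in\C^*$ gives a valid diagonal prefactor, there are $k\geq 1$ factors $E(\cdot)$, and every $u_i$ lies in $\C[h]\setminus\C$, which in particular handles the interior condition $v_i\notin\C$ for $1<i<\ell$ in the definition of standard form. By the uniqueness of standard form (Theorem \ref{standardform}), this gives
$$\ell\bigl(\mathbf{E}_{(a,b)}(u_1,\dots,u_k)\bigr) \;=\; k \;\geq\; 1.$$

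Next, I would suppose for contradiction that $\mathbf{E}_{(a,b)}(u_1,\dots,u_k)\in\Orb_{\sigma^{-1}}\bigl(\diag(c,d)\bigr)$ for some $c,d\in\C^*$. Since $\sigma^{-1}$-similarity is an equivalence relation, this yields $P(h)\in\GL_2(\C[h])$ with
$$\diag(c,d) \;=\; P(h)^{-1}\,\mathbf{E}_{(a,b)}(u_1,\dots,u_k)\,P(h+1).$$
Applying Proposition \ref{sigmashorten} to the right-hand side gives
$$\ell\bigl(\diag(c,d)\bigr) \;=\; \ell\Big(P(h)^{-1}\,\mathbf{E}_{(a,b)}(u_1,\dots,u_k)\,P(h+1)\Big) \;\geq\; k \;\geq\; 1.$$
On the other hand, $\diag(c,d)$ has standard form with no $E(\cdot)$ factors at all, so $\ell\bigl(\diag(c,d)\bigr)=0$. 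This contradiction shows no such $(c,d)$ exists, hence $\mathbf{E}_{(a,b)}(u_1,\dots,u_k)\in\mathcal S$ by Definition \ref{1st-definitionofS}.

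There is no real obstacle here; all of the technical content has been packaged into Proposition \ref{sigmashorten} (together with the uniqueness of the standard form). The corollary is essentially an invariance statement: length is a $\sigma^{-1}$-similarity lower semi-invariant by Proposition \ref{sigmashorten}, and the constant diagonal matrices are precisely the elements of length $0$, so elements of positive length cannot be $\sigma^{-1}$-similar to them.
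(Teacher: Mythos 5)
Your proposal is correct and follows essentially the same route as the paper: both arguments note that $\mathbf{E}_{(a,b)}(u_1,\dots,u_k)$ is already in standard form of length $k\geq 1$, and invoke Proposition~\ref{sigmashorten} to conclude that it cannot be $\sigma^{-1}$-similar to a constant diagonal matrix, which has length $0$. The only difference is cosmetic — you phrase it as a proof by contradiction while the paper states it directly — so nothing further is needed.
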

\begin{proof}
Let $K(h)=\mathbf{E}_{(a,b)}(u_1,\dots,u_k)$. By Proposition~\ref{sigmashorten}, every matrix $\sigma^{-1}$-similar to $K(h)$ has length at least $k$, so $K(h)$ cannot be $\sigma^{-1}$-similar to a diagonal matrix (which has length $0$). Hence $K(h)\in\mathcal S$.
\end{proof}

\begin{lemma} \label{lemmaNS-S}
Let $a, b, \beta \in \mathbb{C}^*$, $\theta \in \mathbb{C}$, and $u(h) \in \mathbb{C}[h] \setminus \mathbb{C}$. Then
$$(i) \; \mathbf{E}_{(a,b)}(\theta) \notin \mathcal{S},\qquad (ii)\; \mathbf{E}_{(a,b)}(0,u(h)) \notin \mathcal{S}, \qquad (iii)\; \mathbf{E}_{(a,b)}(\beta,u(h)) \in \mathcal{S}.$$
\end{lemma}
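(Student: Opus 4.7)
The plan is to handle the three parts by producing an explicit $\sigma^{-1}$-similarity in each case: for (i) and (ii) I aim to land in the orbit of a constant diagonal matrix (witnessing membership in the complement of $\mathcal S$), while for (iii) I aim to land in the orbit of a length-one product $\mathbf{E}_{(a',b')}(v)$ with $v\in\C[h]\setminus\C$, which then belongs to $\mathcal S$ by Corollary~\ref{examplecoro}.

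Part (i) is immediate since $\mathbf{E}_{(a,b)}(\theta)=\diag(a,b)\,E(\theta)$ has all entries in $\C$ when $\theta\in\C$, so Remark~\ref{constantmatrices} finishes it. For part (ii), a direct multiplication yields
\[
\mathbf{E}_{(a,b)}\bigl(0,u(h)\bigr)=\begin{pmatrix}-a & 0\\ -b\,u(h) & -b\end{pmatrix},
\]
which is lower-triangular with constant diagonal. My strategy is to apply the lower-triangular analogue of the proof of Lemma~\ref{lemsiginv}: conjugation by $P(h)=\begin{pmatrix}1 & 0\\ v(h) & 1\end{pmatrix}$ produces the $(2,1)$-entry $a\,v(h)-b\,v(h+1)-b\,u(h)$, and choosing $v$ with $T_{b,a}(v)=-b\,u$ (available by Lemma~\ref{surj-lem-ab}) kills it. This exhibits $\mathbf{E}_{(a,b)}(0,u(h))\sim_{\sigma^{-1}}\diag(-a,-b)$, so it lies outside $\mathcal S$.

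Part (iii) is the main point and requires an explicit conjugation by a carefully chosen constant matrix. I plan to conjugate $K=\mathbf{E}_{(a,b)}(\beta,u(h))$ by the constant lower-unitriangular $P=\begin{pmatrix}1 & 0\\ -b/(a\beta) & 1\end{pmatrix}$; the specific value $\delta=-b/(a\beta)$ is forced by requiring that the conjugation clear the $(2,2)$-entry, so that the result has the form $\diag\cdot E$. A direct computation then yields
\[
P^{-1}\,K\,P \;=\; \begin{pmatrix} a\beta\,u(h)-a-b & a\beta\\ -b/\beta & 0\end{pmatrix} \;=\; \mathbf{E}_{(a\beta,\,b/\beta)}\!\left(u(h)-\tfrac{a+b}{a\beta}\right).
\]
Since $u(h)\in\C[h]\setminus\C$, the inner polynomial is also non-constant, so by Corollary~\ref{examplecoro} the right-hand side belongs to $\mathcal S$; since $\mathcal S$ is a union of $\sigma^{-1}$-orbits, $K\in\mathcal S$ as well. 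The only real obstacle is identifying the correct $P$ in (iii); once it is written down, the rest is a short matrix verification.
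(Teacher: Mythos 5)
Your proof is correct and follows essentially the same strategy as the paper's: parts (i) and (ii) exhibit $\sigma^{-1}$-similarity to a constant diagonal matrix (via Remark~\ref{constantmatrices} and the surjectivity of $T_{a,b}$ from Lemma~\ref{surj-lem-ab}), and part (iii) exhibits $\sigma^{-1}$-similarity to a length-one product $\mathbf{E}_{(a\beta,\,b/\beta)}(v)$ with $v\in\C[h]\setminus\C$, then invokes Corollary~\ref{examplecoro}. The only genuine difference is your conjugating matrix in (iii): you use a constant lower-unitriangular $P$ (so the twisted conjugation degenerates to ordinary conjugation), whereas the paper uses the shift-dependent $P(h)=E(0)E\bigl(-u(h-1)+\beta^{-1}\bigr)E(0)$; your choice is a mild simplification and the computation $P^{-1}KP=\mathbf{E}_{(a\beta,\,b/\beta)}\bigl(u(h)-\tfrac{a+b}{a\beta}\bigr)$ checks out.
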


\begin{proof}
Since $\mathbf{E}_{(a,b)}(\theta)\in\GL_2(\C)$, Lemma~\ref{lemsiginv} and Remark~\ref{constantmatrices} yield (i). To prove (ii), we apply Lemma \ref{surj-lem-ab} and choose $w(h) \in \C[h] \setminus \C$ such that $u(h) = -\frac{a}{b}w(h) + w(h+1)$. Then
\begin{align*}
\mathbf{E}_{(a,b)}(0,u(h))
&= \diag(a,b) E\left(0\right)E\left(-\tfrac{a}{b}w(h) + w(h+1)\right) \\
&=  \diag(-a,-b) E(0)E\left(-\tfrac{a}{b}w(h)\right)E(0)E(w(h+1)) \\
&= E(0)E(-w(h))E(0) \diag(-b,-a) E(w(h+1)) \\
&= E(w(h))^{-1}  \diag(-b,-a) E(w(h+1)) \sim_{\sigma^{-1}}  \diag(-b,-a).
\end{align*}

To prove (iii), we first define $P(h) := E(0)E\left( -u(h-1) + \beta^{-1} \right)E(0)$. Then
$$P^{-1}(h) \mathbf{E}_{(a,b)}(\beta,u(h)) P(h+1) = \diag\left(b\beta^{-1},a\beta\right) E\left(u(h-1)\tfrac{a\beta^2}{b} - \tfrac{a\beta}{b} - \beta \right).$$

Then by Corollary \ref{examplecoro},  (iii) follows.
\end{proof}

\begin{theorem}\label{mainstructureS}
Let $K(h)\in\GL_2(\C[h])$. Then $K(h)\in\mathcal S$ if and only if $K(h)$ is
$\sigma^{-1}$-similar to a matrix of the form
$$
\mathbf{E}_{(\beta_1,\beta_2)}(v_1,\dots,v_m),\quad \text{where} \qquad \beta_1,\beta_2\in\C^*,\; m\geq1,\; v_i(h)\in\C[h]\setminus\C.
$$
\end{theorem}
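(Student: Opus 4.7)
The $(\Leftarrow)$ direction is immediate. By Corollary~\ref{examplecoro}, any matrix of the form $\mathbf{E}_{(\beta_1,\beta_2)}(v_1,\dots,v_m)$ with $v_i\in\C[h]\setminus\C$ already lies in $\mathcal S$. Since $\mathcal S$ is by its very definition the complement of a union of $\sigma^{-1}$-orbits, it is itself a union of $\sigma^{-1}$-orbits, so any $K(h)$ that is $\sigma^{-1}$-similar to such a representative also lies in $\mathcal S$.

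For the $(\Rightarrow)$ direction, my plan is to work with the LQ form of Proposition~\ref{prop:standard-form}. Starting from
\[
K(h)\;=\;\begin{pmatrix}a&0\\ u(h)&b\end{pmatrix}E(q_T)\cdots E(q_1),\qquad a,b\in\C^*,\ q_i\in\C[h]\setminus\C\text{ for }i>1,
\]
I first observe that $T\ge 1$: if $T=0$ then $K$ is lower triangular, and the dual of Lemma~\ref{lemsiginv} (conjugate by $\begin{psmallmatrix}1&0\\ w(h)&1\end{psmallmatrix}$ with $w$ solving $bw(h+1)-aw(h)=-u(h)$, available by Lemma~\ref{surj-lem-ab}) yields $K\sim_{\sigma^{-1}}\diag(a,b)$, contradicting $K\in\mathcal S$. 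For $T\ge 1$, I clear the lower triangular front by conjugating with $P(h)=\begin{psmallmatrix}1&0\\ u(h)/a&1\end{psmallmatrix}$. Using $P(h)^{-1}\begin{psmallmatrix}a&0\\ u(h)&b\end{psmallmatrix}=\diag(a,b)$ together with the identity $E(q_1)\begin{psmallmatrix}1&0\\ u(h+1)/a&1\end{psmallmatrix}=E(q_1+u(h+1)/a)$, I obtain
\[
K(h)\;\sim_{\sigma^{-1}}\;\mathbf{E}_{(a,b)}\bigl(q_T,\dots,q_2,\ q_1+u(h+1)/a\bigr).
\]
All interior arguments $q_T,\dots,q_2$ automatically lie in $\C[h]\setminus\C$ by the LQ property, so if the new tail $\tilde q_1:=q_1+u(h+1)/a$ is also non-constant, we are done.

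The main obstacle is the residual case $\tilde q_1=\theta\in\C$, which I resolve by further case analysis. If $T=1$, the resulting matrix is constant, hence $\sigma^{-1}$-similar to $\diag$ by Remark~\ref{constantmatrices}, a contradiction. If $T=2$ and $\theta=0$, the identity $E(q_2)E(0)=\begin{psmallmatrix}-1&q_2\\0&-1\end{psmallmatrix}$ shows $K\sim_{\sigma^{-1}}\begin{psmallmatrix}-a&aq_2\\0&-b\end{psmallmatrix}$, which by Lemma~\ref{lemsiginv} is $\sigma^{-1}$-similar to $\diag(-a,-b)$, again contradicting $K\in\mathcal S$. If $T\ge 2$ and $\theta\in\C^*$, conjugation by the constant matrix $E(0)E(-\theta)E(0)$, together with Lemma~\ref{shortenlem}(i),(iii), moves the constant to the front and produces $K\sim_{\sigma^{-1}}\mathbf{E}_{(b,a)}(a\theta/b,\,q_T,\dots,q_2)$; applying the trick from the proof of Lemma~\ref{lemmaNS-S}(iii) with $q(h)=-q_2(h-1)$ (so that Lemma~\ref{shortenlem}(i) collapses the tail) then produces a clean form of length $T-1$. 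Finally, if $T\ge 3$ and $\theta=0$, conjugation by $E(0)$ transfers the trailing $E(0)$ to a leading one, yielding $K\sim_{\sigma^{-1}}\mathbf{E}_{(b,a)}(0,q_T,\dots,q_2)$; rewriting the initial block $\diag(b,a)E(0)E(q_T)$ as the lower triangular matrix $\begin{psmallmatrix}-b&0\\-aq_T(h)&-a\end{psmallmatrix}$ re-expresses $K$ in LQ form with parameter $T-2$, and I feed the result back into the LQ-clearing step.

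The delicate point is this last iteration: one must verify that the LQ parameter strictly decreases at each pass (by at least $2$ in the $\theta=0$ subcase), so the procedure terminates in finitely many steps, landing either in a clean form $\mathbf{E}_{(\beta_1,\beta_2)}(v_1,\dots,v_m)$ with all $v_i\notin\C$ (concluding the proof) or in a form exhibiting $K$ as $\sigma^{-1}$-similar to a constant diagonal matrix, which contradicts $K\in\mathcal S$.
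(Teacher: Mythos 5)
Your proof is correct, but it takes a genuinely different route from the paper's. The paper works from Cohn's standard form $\mathbf{E}_{(\omega_1,\omega_2)}(v_1,\dots,v_\ell)$, uses the rotation identity $\diag(a,b)\bigl(\prod_i E(u_i)\bigr)E(\beta)\sim_{\sigma^{-1}}\diag(b,a)E(\tfrac{a\beta}{b})\bigl(\prod_i E(u_i)\bigr)$ to cycle boundary constants to the front and absorb them via Lemma~\ref{shortenlem}, reduces to the shape $\diag(a,b)E(\omega)\prod_i E(u_i)$ with all $u_i$ non-constant, and then finishes with Lemma~\ref{lemmaNS-S} (for $k=1$) or one more rotation $K\sim_{\sigma^{-1}}E(u_k(h-1))\diag(a,b)E(\omega)\prod_{i<k}E(u_i)$ plus Lemma~\ref{shortenlem}(vi) (for $k\ge2$), iterating when $\omega=0$ and the merged argument degenerates. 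You instead start from the LQ form and clear the lower-triangular front factor by the unipotent conjugation $P(h)=\begin{psmallmatrix}1&0\\ u(h)/a&1\end{psmallmatrix}$, which has the pleasant feature of perturbing only the last argument $q_1$; since the Euclidean algorithm already forces $q_i\notin\C$ for $i>1$, all the "bad" behaviour is concentrated in a single slot, at the cost of the extra bookkeeping in the $\theta=0$ recursion (where $T$ drops by $2$ per pass, so termination is indeed strict, as you note). Both arguments are sound; yours is more algorithmic, the paper's leans on the length machinery of Proposition~\ref{sigmashorten} and Lemma~\ref{lemmaNS-S}. Two points to tighten: in the case $T\ge2$, $\theta\in\C^*$ the collapse really runs through Lemma~\ref{shortenlem}(vi) (after rotating $E(q_2)$ to the front), not part (i); and you are implicitly using the fact, established in the proof of Proposition~\ref{prop:standard-form} though stated there only as $q_i\notin\C^*$, that the quotients $q_i$ for $i>1$ are genuinely non-constant.
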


\begin{proof}
The “if” part is immediate from Corollary~\ref{examplecoro}. For the “only if” part, assume $K(h)\in\mathcal S$.  We first observe that for any $\beta\in\C$, $a,b\in\C^*$, and $k\in\Z_{\geq 1}$, we have
$$
\diag(a,b)\left( \prod_{i=1}^k E(u_i(h))\right) E(\beta) \sim_{\sigma^{-1}}  \diag(b,a)E\left( \tfrac{a\beta}{b} \right)\left( \prod_{i=1}^k E(u_i(h))\right).
$$

If $u_1(h) =\eta \in \C$ and $k \geq 2$ then
\begin{align*}
&\quad\diag(a,b)E(\eta)\left( \prod_{i=2}^k E(u_i(h))\right) E(\beta)\\
&\sim_{\sigma^{-1}}\diag(b,a)E\left( \tfrac{a\beta}{b} \right) E(\eta) \left( \prod_{i=2}^k E(u_i(h))\right) \\
&= \begin{cases}
\diag\left(b\eta^{-1},a\eta\right)E\left( \frac{a\beta\eta^2}{b} - \eta \right)E\left( u_2(h) - \eta^{-1} \right)
E(u_3(h)) \cdots E(u_{k}(h)), & \text{if } \eta \neq 0, \\
\diag(-b,-a)E\left( u_2(h) + \tfrac{a\beta}{b} \right)E(u_3(h)) \cdots E(u_{k}(h)), & \text{if } \eta = 0.
\end{cases}
\end{align*}

Thus it suffices to treat the case
$$K(h)=\diag(a,b)\,E(\omega)\,\left( \prod_{i=1}^k E(u_i(h))\right), \qquad u_1(h),\ldots,u_k(h)\in\C[h]\setminus\C,\, \omega\in\C.$$ 

From Lemma~\ref{lemmaNS-S}(i), it follows that $k\geq1$. 
If $k=1$, then Lemma~\ref{lemmaNS-S}(ii) forces $\omega\neq0$, and the proof of Lemma~\ref{lemmaNS-S}(iii) yields the claim. If $k\geq2$, then
\begin{align*}
K(h)&\sim_{\sigma^{-1}}  E(u_k(h-1)) \diag(a,b) E(\omega)\left( \prod_{i=1}^{k-1} E(u_i(h))\right) \\ \notag
&= \diag(b,a) E\left(u_k(h-1)\frac{a}{b}\right)E(\omega)\left( \prod_{i=1}^{k-1} E(u_i(h))\right) \\
&= \begin{cases}
\diag\left(b\omega^{-1}, a\omega\right) E\left( \frac{a\omega^2}{b}u_k(h-1) - \omega \right)E\left(u_1(h)-\omega^{-1}\right)\\
\hspace{8cm}\cdot E(u_2(h))\cdots E(u_{k-1}(h)), & \text{if } \omega \neq 0, \\
\diag(-b,-a)E\left( \frac{a}{b}u_k(h-1) +u_1(h) \right)E(u_2(h))\cdots E(u_{k-1}(h)) , & \text{if }\omega =0.
\end{cases}
\end{align*}

If $\omega=0$ and $\frac{a}{b}u_k(h-1)+u_1(h)\in\C$, we iterate the above length-reduction procedure and deduce the claim.
\end{proof}

Alternatively, we now give explicit standard forms for matrices outside $\mathcal S$. To this end, we begin with the following lemma.

\begin{lemma}\label{describsig}
Let $a,b\in\C^*$. A matrix $K(h)\in\GL_2(\C[h])$ is $\sigma^{-1}$-similar to $\diag(a,b)$ if and only if $K(h) = \diag(a,b)$ or
$$
\begin{aligned}
K(h)
&= (-1)^k\,\diag(a,b)_{[k]}\,
   E(0)\,E\left(-\Bigl(\tfrac{a}{b}\Bigr)^{(-1)^k}u_k\right)\cdots
   E\left(-\tfrac{a}{b}\,u_2\right)\\
&\hspace{5cm}\;
   \cdot E\left(-\tfrac{b}{a}\,u_1+u_1(h+1)\right)\,
   E\big(u_2(h+1)\big)\cdots E\big(u_k(h+1)\big),
\end{aligned}
$$
for some $k\in\Z_{\geq 1}$ and $u_i\in\C[h]$ for $1\leq i\leq k$, with
$u_j(h)\notin\C$ for $1<j<k$. If $k=2$, we additionally assume $(u_1,u_2)\neq(0,0)$. Here
$$
\diag(a,b)_{[k]}:=
\begin{cases}
\diag(a,b), & k\ \text{even},\\
\diag(b,a), & k\ \text{odd}.
\end{cases}
$$
\end{lemma}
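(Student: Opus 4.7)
The plan is to describe the $\sigma^{-1}$-similarity orbit of $\diag(a,b)$ by writing $K(h) = P(h)^{-1}\diag(a,b)\,P(h+1)$ and running $P(h)$ through its standard form (Theorem \ref{standardform}). For the only-if direction, if $P(h) = \diag(\beta_1,\beta_2)$ then $K(h) = \diag(a,b)$, giving the first alternative. Otherwise $P(h) = \diag(\beta_1,\beta_2)\,E(u_1)\cdots E(u_k)$ in standard form with $k\geq 1$, so $u_j\notin\C$ for $1<j<k$, and $(u_1,u_2)\neq(0,0)$ when $k=2$ (cf.\ Remark \ref{length-2remark}); these are exactly the restrictions in the statement. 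The task therefore reduces to the explicit evaluation of $P(h)^{-1}\diag(a,b)\,P(h+1)$.

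I would proceed in three stages. First, invert $P(h)$ using Remark \ref{inverseremark} and the collapse $E(0)^2 = -\I$ (a direct consequence of Lemma \ref{shortenlem}(i)) to obtain
\[
P(h)^{-1} = (-1)^{k-1}\,E(0)\,E(-u_k)\cdots E(-u_1)\,E(0)\,\diag(\beta_1^{-1},\beta_2^{-1}).
\]
Second, expand $\diag(a,b)\,P(h+1) = \diag(a\beta_1,b\beta_2)\,E(u_1(h+1))\cdots E(u_k(h+1))$ and push the composite diagonal rightward through the $E$-factors via Lemma \ref{shortenlem}(iii); the $\beta_i^{\pm 1}$ contributions telescope, leaving the right block with coefficients $r_i = (a/b)^{(-1)^{i-1}}$. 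Third, apply Lemma \ref{shortenlem}(i) to the central cluster $E(-u_1)\,E(0)\,E\bigl(\tfrac{a}{b}\,u_1(h+1)\bigr)$, which collapses to $-E\bigl(\tfrac{a}{b}\,u_1(h+1) - u_1\bigr)$ --- this extra sign upgrades $(-1)^{k-1}$ to $(-1)^k$ --- and then transport the remaining diagonal leftward through every surviving $E$ using (iii). The alternating rescalings $(a/b)^{\pm 1}$ conspire so that the right factors simplify to $E(u_i(h+1))$, the middle factor becomes $E(-\tfrac{b}{a}\,u_1 + u_1(h+1))$, and the left factors become $E\bigl(-(a/b)^{(-1)^i}\,u_i\bigr)$. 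Finally, crossing the leading $E(0)$ via $E(0)\,\diag(p,q) = \diag(q,p)\,E(0)$ brings the diagonal to the front, and counting the total number of flips shows it equals $\diag(a,b)_{[k]}$, which reproduces the claimed formula exactly.

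The converse is a direct reversal: given $K(h)$ of the stated form, set $P(h) := E(u_1(h))\cdots E(u_k(h))$ and run the same manipulations backwards to confirm $K(h) = P(h)^{-1}\diag(a,b)\,P(h+1)$, establishing $K(h)\sim_{\sigma^{-1}}\diag(a,b)$. The principal obstacle is the combinatorial bookkeeping: carefully tracking the alternating $(a/b)^{\pm 1}$ scalars, the parity-controlled swap $\diag(a,b)\leftrightarrow\diag(b,a)$ encoded by $\diag(a,b)_{[k]}$, and the signs extracted from repeated uses of Lemma \ref{shortenlem}(i). The edge case $k=2$ with $(u_1,u_2)=(0,0)$ needs a separate check because then $P(h) = -\diag(\beta_1,\beta_2)$ is effectively diagonal, returning the trivial first alternative rather than the length-$k$ expression.
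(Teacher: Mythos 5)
Your proposal is correct and follows essentially the same route as the paper: write $K(h)=P(h)^{-1}\diag(a,b)P(h+1)$, put $P(h)$ in Cohn standard form, invert via Remark~\ref{inverseremark} picking up the sign $(-1)^{k-1}$, and then shuttle the diagonal through the $E$-factors with Lemma~\ref{shortenlem}(i),(iii) to reach the stated expression (your verified intermediate data --- the collapsed middle factor, the coefficients $-(a/b)^{(-1)^i}$ on the left block, and the final parity $\diag(a,b)_{[k]}$ --- all match). The only cosmetic differences are that the $\beta_i^{\pm1}$ simply cancel at the junction rather than ``telescope,'' and that for the ``if'' direction you reverse the computation directly where the paper instead invokes the length-reduction procedure from Theorem~\ref{mainstructureS}; both are valid.
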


\begin{proof}
The ``if" part follows by repeatedly applying the length-reduction procedure described in the proof of Theorem \ref{mainstructureS}. To prove the ``only if" part, assume $K(h) \sim_{\sigma^{-1}} \diag(a,b)$, then there exists $P(h) \in \GL_2(\C[h])$ such that $K(h) = P^{-1}(h) \diag(a,b) P(h+1)$.  By Theorem~\ref{standardform}, $P(h)$ admits a unique standard form 
$$P(h) = \diag(\rho_1, \rho_2)\qquad \text{or}\qquad P(h)=\mathbf{E}_{(\rho_1,\rho_2)}(u_1,\dots,u_k),
$$ 
where $\rho_1,\rho_2 \in\C^*$, $k \in \Z_{\geq 1}$, $u_i(h)\in\C[h]$ for $1\leq i\leq k$, with $u_j(h)\notin\C$ for $1<j<k$. If $P(h) =\diag(\rho_1, \rho_2)$, then $K(h) = \diag(a,b)$. Otherwise,
\begin{align*}
K(h)=&\left(\prod_{i=1}^k E\bigl(u_i(h)\bigr)\right)^{-1} \diag(\rho_1^{-1},\rho_2^{-1}) \diag(a,b) \diag(\rho_1 ,\rho_2)\,\sigma^{-1}\left(\prod_{i=1}^k E\bigl(u_i(h)\bigr)\right)\\
=&\, (-1)^{k-1} E(0) \left(\mathop{\overleftarrow{\prod}}_{j=1}^{\,k}E(-u_j(h))\right)  E(0) \diag(a,b) \, \sigma^{-1}\left(\prod_{i=1}^k E\bigl(u_i(h)\bigr)\right) \\
=&\, (-1)^{k-1} E(0) \left(\mathop{\overleftarrow{\prod}}_{j=1}^{\,k}E(-u_j(h))\right) \diag(b,a) E(0)\, \sigma^{-1}\left(\prod_{i=1}^k E\bigl(u_i(h)\bigr)\right) \\
=&\, (-1)^{k-1} E(0) E(-u_k(h)) \cdots E(-u_{2}(h)) \diag(a,b)\\ 
&\hspace{4cm}\cdot E\left(-\tfrac{b}{a}u_1(h)\right) E(0) E(u_1(h+1)) E(u_2(h+1)) \cdots E(u_k(h+1)) \\
=&\, (-1)^k \diag(a,b)_{[k]} E(0) E\left(-\left(\tfrac{a}{b}\right)^{(-1)^k} u_k(h)\right) \cdots E\left(-\tfrac{a}{b} u_2(h)\right) \\
&\hspace{5cm} \cdot E\left(-\tfrac{b}{a} u_1(h) + u_1(h+1)\right) 
E(u_2(h+1)) \cdots E(u_k(h+1)).
\end{align*}
\end{proof} 

\begin{remark}
The presentation of $K(h)$ given in Lemma~\ref{describsig} is not necessarily in standard form (Definition~\ref{standardformdef}). For instance, if $u_1(h)=0$ with $k\geq 1$, then $K(h)$ is not in standard form.
\end{remark}

To present all possible standard forms of $K(h)$, we introduce the following definition.

\begin{definition}Fix $a,b\in\C^*$. Let $\varepsilon,\delta\in\{0,1\}$, $k\geq1$, $\beta,\eta\in\C^*$,
and $\mathbf u=(u_1,\dots,u_k)\in(\C[h])^k$.
Define $\mathbf{D}_{(a,b)}(\varepsilon,\delta;\mathbf u)$, $\mathbf{G}_{(a,b)}(\varepsilon;\beta;\mathbf u)$,
$\mathbf{K}_{(a,b)}(\varepsilon;\eta;\mathbf u)$, and $\mathbf{Z}_{(a,b)}(\beta,\eta;\mathbf u)$ by
$$ 
\begin{aligned}
\mathbf{D}_{(a,b)}(\varepsilon,\delta;\mathbf u)&:= (-1)^k \diag(a,b)_{[k+\varepsilon+\delta]}E(0)^{1-\varepsilon}\Biggl(\mathop{\overleftarrow{\prod}}_{i=2}^{\,k} E\Bigl(-\bigl(\tfrac{a}{b}\bigr)^{(-1)^{i+\delta}}\,u_i\Bigr)\Biggr)E\bigl(u_1\bigr)\\
&\hspace{8cm}\cdot \Biggl(\prod_{j=2}^{\,k} E\bigl(u_j(h+1)\bigr)\Biggr)E(0)^{\varepsilon},
\end{aligned}
$$
$$
\begin{aligned}
\mathbf{G}_{(a,b)}(\varepsilon;\beta;\mathbf u) &:= (-1)^{k+1} \diag(a,b)_{[k+1-\varepsilon]}\diag\left(\beta^{(-1)^{k+1-\varepsilon}}, \beta^{(-1)^{k+\varepsilon}}\right)E(0)^{1-\varepsilon}\\
&\hspace{1.5cm}\cdot \left(\mathop{\overleftarrow{\prod}}_{i=2}^{\,k}E\left(-\left(\beta^{2}\tfrac{a}{b}\right)^{(-1)^{i+1}}u_{i}\right)\right) E\bigl(-\beta^2 \tfrac{a}{b}u_1 -\beta \bigr)E\bigl(u_1(h+1) -\beta^{-1} \bigr)\\
&\hspace{8cm}\cdot\Biggl(\prod_{j=2}^{\,k} E\bigl(u_j(h+1)\bigr)\Biggr)E(0)^{\varepsilon},
\end{aligned}
 $$
 $$
 \begin{aligned}
\mathbf{K}_{(a,b)}(\varepsilon;\eta;\mathbf u)&:= (-1)^{k+1} \diag(a,b)_{[k+1-\varepsilon]}\diag\left(\eta^{-1}, \eta\right) E(-\eta) E\left(-\left(\tfrac{a}{b}\right)^{(-1)^{k+\varepsilon}}u_{k} - \eta^{-1}\right)\\
&\hspace{0.5cm}\cdot \left(\mathop{\overleftarrow{\prod}}_{i=2}^{\,k-1}E\left(-\left(\tfrac{a}{b}\right)^{(-1)^{i+\varepsilon}}u_{i}\right)\right)E\bigl(u_1\bigr)
\Biggl(\prod_{j=2}^{\,k} E\bigl(u_j(h+1)\bigr)\Biggr) E\left(-\eta\left(\tfrac{b}{a}\right)^{(-1)^{k+1-\varepsilon}} \right),
 \end{aligned}
 $$
$$
\begin{aligned}
\mathbf{Z}_{(a,b)}(\beta,\eta;\mathbf u)
&:= (-1)^{k}\diag(a,b)_{[k]} \diag\left(\eta^{-1}\beta^{(-1)^{k}},\eta{\beta}^{(-1)^{k+1}}\right)
E(-\eta)\\
&\hspace{1cm}\cdot E\left(-\left(\beta^{2}\tfrac{a}{b}\right)^{(-1)^{k+1}}u_{k}-\eta^{-1}\right) E\left(-\left(\beta^{2}\tfrac{a}{b}\right)^{(-1)^{k}}u_{k-1}\right)\cdots E\left(-\beta^{-2}\tfrac{b}{a}u_2\right)\\
&\hspace{1cm}\cdot E\left(-\beta^{2}\tfrac{a}{b}\,u_1-\beta\right)E\left(u_1(h+1)-\beta^{-1}\right)E\left(u_2(h+1)\right)\cdots E\left(u_{k}(h+1)\right) \\
&\hspace{9cm}\cdot E\left(-\eta\left(\beta^2 \tfrac{a}{b}\right)^{(-1)^{k+1}}\right).
\end{aligned}
$$

The family $\mathbf D$ is the generic one: after the diagonal factor, there is an $E(0)$ term on exactly one boundary, and the core is diagonally $\sigma$-symmetric in the sense that the right block is obtained from the left block by applying the shift $h\mapsto h+1$ together with a rescaling determined by the diagonal part of $\mathbf D$.
The remaining three families, $\mathbf G,\mathbf K,$ and $\mathbf Z$, follow the same pattern but require additional corrections: a diagonal correction, a correction in the central part, or corrections in the boundary factors of the core.
Table~\ref{tab:DGKZ-features} summarizes these corrections.

\end{definition}

\begin{table}[h] 
\centering
\small
\setlength{\tabcolsep}{6pt}
\renewcommand{\arraystretch}{1.25}
\begin{tabular}{|c|p{0.18\textwidth}|p{0.07\textwidth}|p{0.25\textwidth}|p{0.28\textwidth}|}
\hline
\textbf{Family (length)} & \textbf{Diagonal part} & \textbf{Center} & \textbf{Left boundary} & \textbf{Right boundary}\\
\hline
$\mathbf{D}$ (even)
& generic
& generic
& generic
& generic
\\
\hline
$\mathbf{G}$ (odd)
& $\beta$-correction
& $\beta$-shift
& generic
& generic
\\
\hline
$\mathbf{K}$ (odd)
& $\eta$-correction
& generic
& begins with $E(-\eta)$, then $\eta^{-1}$-shift
& ends with \tiny{$E\left(-\eta\left(\tfrac ba\right)^{(-1)^{k+1-\varepsilon}}\right)$}
\\
\hline
$\mathbf{Z}$ (even)
& $(\beta, \eta)$-correction
& $\beta$-shift
& begins with $E(-\eta)$,  then $\eta^{-1}$-shift
& ends with \tiny{$E\left(-\eta\left(\beta^2\tfrac ab\right)^{(-1)^{k+1}}\right)$}
\\
\hline
\end{tabular}
\smallskip
\caption{Quick-reference features of the families $\mathbf{D},\mathbf{G},\mathbf{K},\mathbf{Z}$.}
\label{tab:DGKZ-features}
\end{table}

\begin{remark}
To avoid any confusion, we adopt the following convention when $k=1$. 
$$\mathbf{K}_{(a,b)}(\varepsilon;\eta;\mathbf u)= \diag(a,b)_{[\varepsilon]}\diag\left(\eta^{-1}, \eta\right) E(-\eta)E\bigl(u_1\bigr) E\left(-\eta\left(\tfrac{b}{a}\right)^{(-1)^{\varepsilon}} \right),$$
\begin{multline*}
\mathbf{Z}_{(a,b)}(\beta,\eta;\mathbf u)=-\diag(b\eta^{-1}\beta^{-1},a\eta\beta)
E(-\eta)E\left(-\beta^{2}\tfrac{a}{b}\,u_1-\beta\right)\\E\left(u_1(h+1)-\beta^{-1}\right)E\left(-\eta\beta^2 \tfrac{a}{b}\right).
\end{multline*}

\end{remark}

\begin{theorem} \label{explicitdescription} Let $a,b \in\C^*$. Then $K(h)\sim_{\sigma^{-1}}\diag(a,b)$ if and only if $K(h)$ is in any of the following standard forms:
$$\diag(a,b)_{[\varepsilon]} ,\quad \diag(-a,-b)_{[\delta]}E(0)^{1-\varepsilon}E(\beta)E(0)^{\varepsilon},$$
$$-\diag\left(a\beta^{-1}, b\beta\right)E(-\beta - \beta\tfrac{b}{a}), \quad \diag\left(-a\beta\eta^{-1},-b\eta\beta^{-1}\right)
E(\eta)\,
E\left(-\frac{(\beta-\eta)b}{a\beta^{2}}+\frac{\beta-\eta}{\beta\eta}\right),$$
$$-\diag\left(b(\beta\eta)^{-1},a\beta\eta\right)E(-\eta)E\left(-\beta^{2}\tfrac{a}{b}\,u_1-\beta-\eta^{-1}\right)E\left(u_1(h+1)-\beta^{-1}\right)E\left(-\eta\beta^2 \tfrac{a}{b}\right),$$
$$\mathbf{D}_{(a,b)}(\varepsilon,\delta;\mathbf u),\quad  \mathbf{G}_{(a,b)}(\varepsilon;\beta;\mathbf u),\quad
\mathbf{K}_{(a,b)}(\varepsilon;\eta;\mathbf u), \quad \mathbf{Z}_{(a,b)}(\beta,\eta;\mathbf u),$$
where $\varepsilon, \delta \in \{0,1\}$, $\beta, \eta \in\C^*$, and $\mathbf u=(u_1,\dots,u_k)\in(\C[h]\setminus \C)^k$ for $k \in \Z_{\geq1}$.
\end{theorem}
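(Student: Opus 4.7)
The plan is to deduce both directions from Lemma \ref{describsig}, which already characterizes matrices $\sigma^{-1}$-similar to $\diag(a,b)$ but presents them in a product that is generally not in standard form. The work consists of converting that description into standard form by a careful case analysis on the boundary entries $u_1$ and $u_k$ of the tuple $\mathbf{u} = (u_1, \ldots, u_k)$.

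For the ``if'' direction, each of the listed expressions is either the degenerate case $\diag(a,b)_{[\varepsilon]}$, a small-$k$ specialization, or a member of one of the families $\mathbf{D}, \mathbf{G}, \mathbf{K}, \mathbf{Z}$. By tracing the construction backwards, one exhibits, for each, an explicit $P(h) \in \GL_2(\C[h])$ fitting the shape of Lemma \ref{describsig} so that $P(h)^{-1} \diag(a,b) P(h+1)$ reproduces the given form. This step reduces to unpacking definitions and applying Lemma \ref{shortenlem} in reverse.

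The ``only if'' direction carries the main content. Starting from the expression in Lemma \ref{describsig}, which presents any $K(h) \sim_{\sigma^{-1}} \diag(a,b)$ with $K(h) \neq \diag(a,b)$ as a product involving a front $E(0)$, a central factor $E(-\tfrac{b}{a}u_1 + u_1(h+1))$, and tail factors $E(u_j(h+1))$, I branch on whether the interior arguments remain polynomials or collapse into constants. When $u_1, u_k \in \C[h] \setminus \C$ and the central argument stays non-constant, the expression is already close to standard form and, after relabeling, becomes $\mathbf{D}_{(a,b)}(\varepsilon,\delta;\mathbf{u})$, with $\varepsilon, \delta$ recording the placement of the $E(0)$ factors at the two ends. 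When $u_1 = \beta \in \C^*$ (or the central argument degenerates to a nonzero constant $\beta$), Lemma \ref{shortenlem}(vi) converts a subproduct of the form $E(\cdot)\,E(\beta)\,E(\cdot)$ into a diagonal correction $\diag(\beta, \beta^{-1})$ flanked by shifted factors, producing $\mathbf{G}_{(a,b)}(\varepsilon;\beta;\mathbf{u})$. A symmetric treatment at the opposite end, when $u_k = \eta \in \C^*$, yields $\mathbf{K}_{(a,b)}(\varepsilon;\eta;\mathbf{u})$, and when both degenerations occur simultaneously, both corrections apply, yielding $\mathbf{Z}_{(a,b)}(\beta,\eta;\mathbf{u})$.

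The remaining cases, in which one or more of $u_1, u_k$ equal $0$, are handled by Lemma \ref{shortenlem}(i) and (iv), which collapse two adjacent $E$-factors into one; the resulting small-$k$ specializations account for the short exceptional forms in the list (corresponding to $k=1$ or $k=2$). Uniqueness of standard form (Theorem \ref{standardform}) then ensures each output of the reduction is genuinely the standard form of $K(h)$, and guarantees the lists are disjoint. The principal obstacle is bookkeeping: tracking signs, the alternating exponents $(-1)^{i+\delta}$ on $a/b$, and the diagonal corrections introduced by each application of Lemma \ref{shortenlem}(iii) and (vi). Conceptually the argument is a systematic case analysis, but matching the reduced expressions against the precisely defined families $\mathbf{D}, \mathbf{G}, \mathbf{K}, \mathbf{Z}$ requires careful attention to the parity of $k$, to the values of $\varepsilon, \delta \in \{0,1\}$, and to the interplay of the shift $h\mapsto h+1$ with the diagonal rescaling by $a/b$.
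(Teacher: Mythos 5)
Your proposal follows essentially the same route as the paper's Appendix A: both directions are extracted from Lemma~\ref{describsig}, and the ``only if'' direction is a case analysis on whether the boundary entries and the central argument $-\tfrac{b}{a}u_1+u_1(h+1)$ are non-constant, zero, or in $\C^*$, with Lemma~\ref{shortenlem} driving each reduction to standard form and the families $\mathbf{D},\mathbf{G},\mathbf{K},\mathbf{Z}$ absorbing the four degeneration patterns exactly as you describe. The one place your sketch undercounts is the zero cases: when $u_1=0$ (or $u_k=0$) and $k$ is large, collapsing the adjacent $E$-factors does not merely produce small-$k$ exceptional forms but a new full-length product whose merged central argument $-\tfrac{a}{b}u_2+u_2(h+1)$ must itself be re-examined --- if it degenerates to a nonzero constant (which by Lemma~\ref{usefullem} forces $a=b$) one lands in $\mathbf{G}_{(a,a)}$ or $\mathbf{Z}_{(a,a)}$ rather than in $\mathbf{D}$ --- but this iterated degeneration is handled by the same tools you already invoke, so it is a bookkeeping omission rather than a flaw in the method.
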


\begin{proof}
We defer the case-by-case proof to Appendix A.
\end{proof}

\begin{remark}
The main importance of Theorem~\ref{explicitdescription} concerns the classification of
nonsimple rank-2 modules (more precisely, in describing when $M(\alpha,\mathbf a,K(h))$ admits a proper
nonzero rank-$1$ submodule). On the other hand,  the theorem also gives an explicit description of a fixed class in the nonabelian cohomology set
$H^1(\Z,\GL_2(\C[h]))$.
Fix $D=\diag(a,b)$ with $a,b\in\C^*$, and let $c_D:\Z\to \GL_2(\C[h])$ be the $1$--cocycle with
$c_D(1)=D$. Under the identification in Remark~\ref{rem-def-h1}, the set of all
$K(h)\in\GL_2(\C[h])$ satisfying $K\sim_{\sigma^{-1}}D$ is exactly the set of cocycle representatives
of the class $[c_D]$.
\end{remark}

\section{A family of $U(\mathfrak{h})$-free modules of rank 2} \label{newfamilysection}
In this section, we classify (up to isomorphism) all simple scalar-type modules of rank 2; that is, all modules of the form 
$$M\left(\alpha, \mathbf a, K(h)\right) \quad  \text{where}\;\;\mathbf a\in\{(2,0,0),(0,2,0),(0,0,2)\},\
\alpha\in\C,\
K(h)\in\GL_2(\C[h]).$$

\subsection{Simplicity criteria}
\begin{lemma}\label{submodlem}
Let $\alpha\in 1+\frac{1}{2}\Z_{\geq0}$. Then  $u_{\alpha}(h):=\prod_{j=0}^{2\alpha-2}(h+\alpha-1-j)$ is the unique monic solution of the equation \begin{equation} \label{eq-shift}
(h+\alpha-1)\,q(h-1)=(h-\alpha)\,q(h).
\end{equation}
Furthermore,  $V=\left(u_{\alpha}(h)\C[h]\right)^{\oplus2}$ is a proper rank-$2$ submodule of 
$M\left(\alpha, (0,2,0), K(h)\right)$.
\end{lemma}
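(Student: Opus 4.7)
The plan is to split the proof into two independent parts: (a) verifying that $u_{\alpha}$ is the unique monic polynomial solution of the functional equation \eqref{eq-shift}, and (b) showing that the action from Theorem~\ref{U(h)-free-realization} preserves $V = (u_{\alpha}(h)\C[h])^{\oplus 2}$.

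For (a), existence is immediate by telescoping: both $(h+\alpha-1)\,u_{\alpha}(h-1)$ and $(h-\alpha)\,u_{\alpha}(h)$ equal the same $2\alpha$-term product $(h+\alpha-1)(h+\alpha-2)\cdots(h-\alpha)$. For uniqueness, I evaluate \eqref{eq-shift} at $h = \alpha - k$ to obtain $(2\alpha - 1 - k)\,q(\alpha - 1 - k) = -k\,q(\alpha - k)$; since $2\alpha - 1 - k \geq 1$ for every $k \in \{0, 1, \ldots, 2\alpha - 2\}$, an induction starting from $k = 0$ forces $q(\alpha - 1), q(\alpha - 2), \ldots, q(1-\alpha)$ to vanish in turn. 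Hence $u_{\alpha}$ divides $q$; writing $q = u_{\alpha} \cdot r$ and substituting into \eqref{eq-shift} collapses the equation to $r(h-1) = r(h)$, so $r$ is constant, and the monic condition forces $r = 1$.

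For (b), applying Theorem~\ref{U(h)-free-realization} with $\mathbf{a} = (0,2,0)$ gives $P_{(\mathbf{a}, \alpha)}(h) = (h - \alpha + 1)\,\I_2$ and $\overline{P}_{(\mathbf{a}, \alpha)}(h) = -(h + \alpha)\,\I_2$, so the actions become
\[
e\cdot v(h) = -(h + \alpha - 1)\,K(h-1)^{-1}\,v(h-1), \qquad f\cdot v(h) = (h - \alpha + 1)\,K(h)\,v(h+1).
\]
Writing $v(h) = u_{\alpha}(h)\,w(h)$ for $w \in \C[h]^{\oplus 2}$, the functional equation and its shift $h \mapsto h+1$ yield $(h + \alpha - 1)\,u_{\alpha}(h-1) = (h - \alpha)\,u_{\alpha}(h)$ and $(h - \alpha + 1)\,u_{\alpha}(h+1) = (h + \alpha)\,u_{\alpha}(h)$; these pull a factor of $u_{\alpha}(h)$ cleanly outside the $e$- and $f$-actions, and since $K(h)$ and $K(h-1)^{-1}$ have polynomial entries, the resulting vectors lie in $V$. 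Stability under $h$ is obvious from the identification $h \cdot m = h\,m$.  Finally, $V = u_{\alpha}(h)\C[h]\,e_1 \oplus u_{\alpha}(h)\C[h]\,e_2$ is free of $\C[h]$-rank $2$, and it is proper because $\deg u_{\alpha} = 2\alpha - 1 \geq 1$, so $e_1 \notin V$. The main technical obstacle is the uniqueness step in (a); once the correct cascade of substitutions is identified and the non-vanishing of the coefficients $2\alpha - 1 - k$ is verified on the required range, the rest is a short computation using the shift identity.
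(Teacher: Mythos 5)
Your proposal is correct and follows essentially the same route as the paper: uniqueness via the cascade of evaluations at $h=\alpha,\alpha-1,\dots,2-\alpha$ (forcing $q(\alpha-1)=\cdots=q(1-\alpha)=0$, hence $u_\alpha\mid q$), and stability of $V$ via the two shift identities $(h+\alpha-1)\,u_\alpha(h-1)=(h-\alpha)\,u_\alpha(h)$ and $(h-\alpha+1)\,u_\alpha(h+1)=(h+\alpha)\,u_\alpha(h)$ applied to the $e$- and $f$-actions from Theorem~\ref{U(h)-free-realization}. You simply spell out the existence and divisibility steps that the paper leaves implicit.
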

\begin{proof} The fact that $q=u_{\alpha}$ is the only monic  solution of \eqref{eq-shift} can be easily proven after  substituting $h$ successively with $\alpha, \alpha-1,...,2-\alpha$ to obtain $q(\alpha-1)=\cdots = q(1-\alpha) =0$. Then, for any $g(h) \in \C[h]$,
$$(h+\alpha-1)\sigma\left(u_{\alpha}(h)g(h)\right)
=u_{\alpha}(h)(h-\alpha)\sigma(g(h)),$$
$$ (h - \alpha + 1)\sigma^{-1}\left(u_{\alpha}(h) g(h) \right)
=u_{\alpha}(h) (h + \alpha)\sigma^{-1}(g(h)).$$

Then for $g_1,g_2\in\C[h]$ the $\mathfrak{sl}(2)$-action on $V$ is:
\begin{equation} \label{submod-calculation}
\begin{aligned}
e \cdot \begin{pmatrix}u_{\alpha}(h)g_1 \\u_{\alpha}(h) g_2 \end{pmatrix}&= u_{\alpha}(h) \sigma\left( K^{-1}(h)\begin{pmatrix}-(h - \alpha + 1) & 0 \\0 & -(h - \alpha + 1)\end{pmatrix} \right)\begin{pmatrix}\sigma(g_1) \\\sigma(g_2)\end{pmatrix}, \\
f \cdot \begin{pmatrix}u_{\alpha}(h)g_1 \\u_{\alpha}(h) g_2 \end{pmatrix}&=u_{\alpha}(h)\begin{pmatrix}h + \alpha & 0 \\0 & h + \alpha\end{pmatrix}K(h)\begin{pmatrix}\sigma^{-1}(g_1) \\\sigma^{-1}(g_2)\end{pmatrix}.
\end{aligned}
\end{equation}

This proves the lemma.
\end{proof}

\begin{proposition}\label{submodesofrank2}
Let $K(h)\in\mathcal S$. Then, for every $\alpha\in\C$,
$$
M\bigl(\alpha, (2,0,0), K(h)\bigr) \quad\text{and}\quad M\bigl(\alpha, (0,0,2), K(h)\bigr)
$$
have no proper rank-$2$ submodules. The same conclusion holds for
$M\bigl(P_{((0,2,0),\alpha)}(h),K(h)\bigr)$ whenever $\alpha\notin 1+\tfrac{1}{2}\Z_{\ge0}$.
\end{proposition}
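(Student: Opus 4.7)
My plan is to analyze the quotient $Q:=M/N$ for a hypothetical proper rank-$2$ submodule $N\subsetneq M:=M(\alpha,\mathbf a,K(h))$, and to derive a contradiction in each case. Since $M$ and $N$ are free $\C[h]$-modules of rank $2$, $Q$ is a finitely generated torsion $\C[h]$-module, hence finite-dimensional over $\C$. It inherits an $\mathfrak{sl}(2)$-structure on which the Casimir acts by $(2\alpha-1)^{2}$, so by Weyl's complete reducibility $Q\simeq\bigoplus_{i}L(n_{i})$, where $L(n)$ denotes the simple $\mathfrak{sl}(2)$-module of dimension $n+1$. Matching Casimir eigenvalues $(n_{i}+1)^{2}=(2\alpha-1)^{2}$ forces $n_{i}\in\{2\alpha-2,\,-2\alpha\}\cap\Z_{\ge 0}$. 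The rest of the plan combines this with the explicit $f$- and $e$-actions from Theorem~\ref{U(h)-free-realization} and the nilpotency of $f$ (resp.\ $e$) on any finite-dimensional $\mathfrak{sl}(2)$-module.

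For $\mathbf a=(2,0,0)$, where $P_{(\mathbf a,\alpha)}=\I_{2}$, the action $f\cdot v=K(h)\,v(h+1)$ is the composition of multiplication by $K(h)\in\GL_{2}(\C[h])$ with the shift $v\mapsto v(h+1)$; each of these is a $\C$-linear automorphism of $M$, so $f$ is a $\C$-linear bijection on $M$, and in particular induces a surjection on $Q$. Combined with its nilpotency on the finite-dimensional $Q$, this forces $Q=0$. The case $\mathbf a=(0,0,2)$ is symmetric: $\overline{P}_{(\mathbf a,\alpha)}=-\I_{2}$ makes $e\cdot v=-K^{-1}(h-1)\,v(h-1)$ a bijection of $M$, and the same argument applied to $e$ forces $Q=0$.

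For $\mathbf a=(0,2,0)$ I would first verify by induction that
\[
f^{k}\cdot v=\Bigl(\prod_{j=0}^{k-1}(h-\alpha+1+j)\Bigr)\,K(h)K(h+1)\cdots K(h+k-1)\cdot v(h+k).
\]
Since the product of the $K(h+j)$'s together with the shift remains a $\C$-linear bijection of $M$, this gives $f^{k}(M)=\prod_{j=0}^{k-1}(h-\alpha+1+j)\cdot M$. Nilpotency of $f$ on $Q$ (some $f^{k}=0$) then says that $\prod_{j=0}^{k-1}(h-\alpha+1+j)$ annihilates $Q$, so every eigenvalue of $h$ on $Q$ lies in $\{\alpha-1,\alpha-2,\dots,\alpha-k\}$; hence each summand $L(n_{i})$ satisfies $n_{i}/2\le\alpha-1$, i.e.\ $n_{i}\le 2\alpha-2$. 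Under the hypothesis $\alpha\notin 1+\tfrac{1}{2}\Z_{\ge 0}$, the option $n_{i}=2\alpha-2$ is excluded outright, while $n_{i}=-2\alpha$ would demand $\alpha\ge 1/2$, contradicting $n_{i}\ge 0\Leftrightarrow\alpha\le 0$. Thus $Q=0$, yielding the desired contradiction. The main technical step will be this last bound, where I must correctly extract the eigenvalue constraint on $h|_{Q}$ from the nilpotency of $f$ and then intersect it with the Casimir-imposed list of possible $n_{i}$; the two scalar cases are immediate once the bijectivity of $f$ (resp.\ $e$) on $M$ is observed. Note that the hypothesis $K(h)\in\mathcal S$ does not explicitly enter the argument; it seems to be included only for uniformity with Corollary~\ref{meaningofS} and the subsequent simplicity criterion.
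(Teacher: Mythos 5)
Your proposal is correct, but it takes a genuinely different route from the paper's. The paper's proof first uses the classification machinery (Theorems~\ref{U(h)-free-realization} and~\ref{isomorphismthm}, deferred to \cite{GNZ}) to identify a hypothetical proper rank-$2$ submodule with some $M(\beta,\mathbf b,K'(h))$, realizes the inclusion by a matrix $A(h)$ with $\det A(h)\neq 0$, and then runs a determinant/divisibility analysis on the intertwining identity $P_{(\mathbf a,\alpha)}(h)K(h)A(h+1)=A(h)P_{(\mathbf b,\beta)}(h)K'(h)$, forcing $\det A(h)\in\C^*$ (hence $N=M$) except when $\alpha\in 1+\tfrac12\Z_{\ge0}$. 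You instead pass to the finite-dimensional torsion quotient $Q=M/N$ and use classical finite-dimensional $\mathfrak{sl}(2)$-theory: for $\mathbf a=(2,0,0)$ and $(0,0,2)$ the operator $f$ (resp.\ $e$) is a $\C$-linear bijection of $M$, hence surjective and nilpotent on $Q$, so $Q=0$; for $\mathbf a=(0,2,0)$ your formula for $f^k$ confines the $h$-eigenvalues of $Q$ to $\{\alpha-1,\dots,\alpha-k\}$, which is incompatible with the Casimir constraint $n_i\in\{2\alpha-2,-2\alpha\}\cap\Z_{\ge0}$ precisely when $\alpha\notin 1+\tfrac12\Z_{\ge0}$. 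All the steps check out (the $f^k$ formula, the torsion/finite-dimensionality of $Q$, and the weight bookkeeping are correct), and you are right that $K(h)\in\mathcal S$ plays no role --- the paper's proof does not use it either. Your approach is more elementary and self-contained, since it avoids invoking the classification theorem to identify the submodule, and it makes the origin of the exceptional set $1+\tfrac12\Z_{\ge0}$ transparent (it is exactly when $L(2\alpha-2)$ exists with the right Casimir eigenvalue and top weight $\alpha-1$); the paper's approach stays inside the matrix framework and, as a by-product, pins down the possible shapes $(\beta,\mathbf b)$ of the submodule, which feeds directly into the subsequent non-split exact sequence built from Lemma~\ref{submodlem}. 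One minor presentational point: inequalities such as $n_i/2\le\alpha-1$ implicitly treat $\alpha$ as real; it is cleaner to say that $n_i/2=\alpha-j$ for some integer $j\ge1$ and intersect this with $n_i\in\{2\alpha-2,-2\alpha\}\cap\Z_{\ge0}$, which yields the same contradictions.
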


\begin{proof}
Let $\mathbf a\in\{(2,0,0),(0,2,0),(0,0,2)\}$, and suppose $M\subset M\left(\alpha, \mathbf a, K(h)\right)$ is a proper submodule of rank $2$. Then $M$ has central character $(2\alpha-1)^2$. Consequently, there exist 
$K'(h)\in\GL_2(\C[h])$  and $\beta\in\{\alpha,\,1-\alpha\}$ such that
$$
M \;\simeq\; M\bigl(\beta, \mathbf b, K'(h)\bigr),
$$
where
$$
\mathbf b\in\{(2,0,0),(0,2,0),(0,0,2),(1,1,0),(1,0,1),(0,1,1)\}.
$$

For brevity, write $P_{(\mathbf b,\beta)}(h)=\diag\bigl(\mu_1(h),\mu_2(h)\bigr)$, where $\mu_1(h),\mu_2(h)$ are monic polynomials determined by $\mathbf b$ as in Definition~\ref{tripledef}. The inclusion $M\subset M\left(\alpha, \mathbf a, K(h)\right)$ gives rise to a $ U(\mathfrak{sl}(2))$-homomorphism $\Phi:M\bigl(\beta, \mathbf b, K'(h)\bigr)\to M\left(\alpha, \mathbf a, K(h)\right)$. Then there exists $A(h) \in \Mat_2(\mathbb{C}[h]) $ with $ \det A(h) \neq 0 $ such that, for all $g_1(h),g_2(h)\in\C[h]$,
$$
\Phi \begin{pmatrix}g_1(h) \\ g_2(h) \end{pmatrix} = A(h)\,\begin{pmatrix}g_1(h) \\ g_2(h) \end{pmatrix}.$$

The $f$–action yields the intertwining identity:
$$
P_{(\mathbf{a},\alpha)}(h) K(h) A(h+1) = A(h) P_{(\mathbf{b},\beta)}(h) K'(h),
$$
and taking determinants gives
\begin{equation}\label{deteq}
\det(P_{(\mathbf{a},\alpha)}(h)) \det(K(h)) \det(A(h+1)) = \det(A(h)) \det(P_{(\mathbf{b},\beta)}(h)) \det(K'(h)).
\end{equation}

\noindent
\textbf{Case 1: $\mathbf{a} = (2,0,0)$.}  
Equation \eqref{deteq} becomes
$$
\det K(h)\,\det A(h+1)=\det A(h)\,\mu_1(h)\mu_2(h)\,\det K'(h).
$$

Since $\det K(h),\det K'(h)\in\C^*$ and $\det A(h)\neq0$,$\,\,\det A(h+1)=\mu_1(h)\mu_2(h)\,\det A(h).$
With $\mu_1,\mu_2$ monic, degree comparison forces $\mu_1=\mu_2=1$, hence
$\det A(h)\in\C^*$. Therefore, $\Phi$ is an isomorphism, contradicting that $M$ is proper.

\noindent
\textbf{Case 2: $ \mathbf{a} = (0,0,2)$.}  Equation~\eqref{deteq} becomes
$$
\bigl((h-\alpha+1)(h+\alpha)\bigr)^{2}\det K(h)\,\det A(h{+}1)
=\det A(h)\,\mu_1(h)\mu_2(h)\,\det K'(h).
$$

Since $\det K(h),\det K'(h)\in\C^*$ and $\det A(h)\neq0$,
$$
\bigl((h-\alpha+1)(h+\alpha)\bigr)^{2}\det A(h{+}1)
=\mu_1(h)\mu_2(h)\,\det A(h).
$$

With $\mu_1,\mu_2$ monic, degree and leading–term comparison yield
$\mu_1(h)=\mu_2(h)=(h-\alpha+1)(h+\alpha)$ and hence
$\det A(h)\in\C^*$. Therefore $\Phi$ is an isomorphism, contradicting that $M$ is proper.

\noindent
\textbf{Case 3: $ \mathbf{a} = (0,2,0)$.}  Equation~\eqref{deteq} becomes
$$
(h-\alpha+1)^2\,\det K(h)\,\det A(h+1)=\det A(h)\,\mu_1(h)\mu_2(h)\,\det K'(h),
$$
and since $\det K(h),\det K'(h)\in\C^*$ and $\det A(h)\neq0$, 
$$(h-\alpha+1)^2\,\det A(h+1)=\mu_1(h)\mu_2(h)\,\det A(h).$$ 

It follows that $\det A(h)$ is a product of consecutive linear factors. In particular, there exist $a\in\C,\ k,\ell\in\Z_{\geq0}$ such that
$$
\det A(h)=\prod_{j=0}^{k}\bigl(h-a+j\bigr) \quad\text{or}\quad \det A(h)=\prod_{j=0}^{k}\bigl(h-a+j\bigr)\,\prod_{j=0}^{\ell}\bigl(h-a+j\bigr).
$$

Assume $\det A(h)=\prod_{j=0}^{k}(h-a+j)$. Then
$$
(h-\alpha+1)^2 \Bigl(\prod_{j=1}^{k+1}(h-a+j) \Bigr)
=\Bigl(\prod_{j=0}^{k}(h-a+j)\Bigr)\mu_1(h)\mu_2(h),
$$
so after canceling $\prod_{j=1}^{k}(h-a+j)$, we get
$$
(h-\alpha+1)^2\,(h-a+k+1)=(h-a)\,\mu_1(h)\mu_2(h).
$$

Hence $a=\alpha-1$ and $\mu_1(h)\mu_2(h)=(h-\alpha+k+2)(h-\alpha+1).$
Using $\mu_1\mid\mu_2\mid(h+\alpha)(h-\alpha+1)$ and that $\mu_1,\mu_2$ are monic, the possibilities are
$$
\mu_1(h)=1,\quad \mu_2(h)=(h-\alpha+k+2)(h-\alpha+1),
\quad\text{or}\quad
\mu_1(h)=h-\alpha+k+2,\quad \mu_2(h)=h-\alpha+1.
$$

If $\mu_1(h)=h-\alpha+k+2$ and $\mu_2(h)=h-\alpha+1$, then the condition $\mu_1\mid\mu_2$ forces
$h-\alpha+k+2 = h-\alpha+1$, hence $k+2=1$, a contradiction since $k\geq0$. If $\mu_1(h)=1$ and $\mu_2(h)=(h-\alpha+k+2)(h-\alpha+1)$, the divisibility
$\mu_2\mid(h+\alpha)(h-\alpha+1)$ implies $h-\alpha+k+2\mid h+\alpha$, hence
$h-\alpha+k+2=h+\alpha$ and so 
$$
\alpha=\tfrac{k+2}{2}\in 1+\tfrac{1}{2}\Z_{\geq0}.
$$

Assume $A(h)=\prod_{j=0}^{k}(h-a+j)\prod_{j=0}^{\ell}(h-a+j)$. Then
$$
(h-\alpha+1)^2\prod_{j=1}^{k+1}(h-a+j)\prod_{j=1}^{\ell+1}(h-a+j)
=\prod_{j=0}^{k}(h-a+j)\prod_{j=0}^{\ell}(h-a+j)\mu_1(h)\mu_2(h),
$$
and canceling $\prod_{j=1}^{k}(h-a+j)\prod_{j=1}^{\ell}(h-a+j)$ yields
$$
(h-\alpha+1)^2\,(h-a+k+1)(h-a+\ell+1)=(h-a)^2\,\mu_1(h)\mu_2(h).
$$

Hence $a=\alpha-1$ and $\mu_1(h)\mu_2(h)=(h-\alpha+k+2)(h-\alpha+\ell+2).$
Using $\mu_1\mid\mu_2\mid(h+\alpha)(h-\alpha+1)$ and that $\mu_1,\mu_2$ are monic, we have two possibilities:
$$
\mu_1(h)=1,\qquad \mu_2(h)=(h-\alpha+k+2)(h-\alpha+\ell+2),
$$
or
$$
\mu_1(h)=h-\alpha+k+2,\qquad \mu_2(h)=h-\alpha+\ell+2.
$$

If $\mu_1(h)=1$ and $\mu_2(h)=(h-\alpha+k+2)(h-\alpha+\ell+2)$, then
$$
\mu_2(h)=(h-\alpha+k+2)(h-\alpha+\ell+2)=(h+\alpha)(h-\alpha+1).
$$

Thus $\{k+2,\ell+2\}=\{2\alpha,\,1\}$. Since $k,\ell\in\Z_{\ge0}$, this would force
$\ell+2=1$ (or $k+2=1$), i.e. $\ell=-1$ (or $k=-1$), a contradiction.

If $\mu_1(h)=\mu_2(h)=h-\alpha+k+2=h-\alpha+\ell+2$, then $k=\ell$ and the
divisibility $\mu_2\mid (h+\alpha)(h-\alpha+1)$ implies
$h-\alpha+k+2=h+\alpha$, and
$$
\alpha=\tfrac{k+2}{2}\in 1+\tfrac{1}{2}\Z_{\geq0}.
$$

It remains to consider the case $\mathbf a=(0,2,0)$ with $\alpha\in 1+\frac{1}{2}\Z_{\geq 0}$. By Lemma \ref{submodlem}, the module 
$M\left(\alpha, (0,2,0), K(h)\right)$ contains a nonzero proper submodule of rank $2$. This completes the proof of the proposition.
\end{proof}

\begin{theorem}\label{newfamily}
Let ${\bf{a}} \in \{(2,0,0), (0,2,0), (0,0,2) \}$. The module $M\left(\alpha, \mathbf a, K(h)\right) $ is simple if and only if one of the following holds: 
\begin{enumerate}
    \item $K(h) \in \mathcal S$ and $ \alpha \in \mathbb{C}$ (arbitrary), if ${\bf{a}} = (2,0,0)$ or ${\bf{a}} = (0,0,2)$;
    \item $K(h) \in \mathcal S$ and $ \alpha \in \mathbb{C} \setminus \left( \tfrac{1}{2} \mathbb{Z}_{\geq 0} + 1 \right) $, if ${\bf{a}} = (0,2,0)$.
\end{enumerate}
\end{theorem}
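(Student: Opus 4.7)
The idea is to reduce the question to a rank dichotomy and then invoke the structural results already established. Any $\mathfrak{sl}(2)$-submodule $N$ of $M(\alpha,\mathbf a,K(h))$ is in particular a $\mathbb C[h]$-submodule of the free rank-$2$ module $\mathbb C[h]^{\oplus 2}$; since $\mathbb C[h]$ is a PID, $N$ is free over $\mathbb C[h]$ of rank $0$, $1$, or $2$. Therefore $M(\alpha,\mathbf a,K(h))$ is simple if and only if it has no nonzero proper submodule of $\mathbb C[h]$-rank $1$ and no proper submodule of $\mathbb C[h]$-rank $2$.

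For the sufficiency direction, suppose $K(h)\in\mathcal S$, and moreover $\alpha\notin 1+\tfrac12\mathbb Z_{\geq 0}$ in the case $\mathbf a=(0,2,0)$. Corollary~\ref{meaningofS} rules out rank-$1$ submodules directly, so we are reduced to excluding proper rank-$2$ submodules. Proposition~\ref{submodesofrank2} does exactly this: unconditionally in $\alpha$ when $\mathbf a\in\{(2,0,0),(0,0,2)\}$, and under the stated restriction on $\alpha$ when $\mathbf a=(0,2,0)$. Combining these two inputs, $M(\alpha,\mathbf a,K(h))$ is simple.

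For the necessity direction, I argue contrapositively. If $K(h)\notin\mathcal S$, then Corollary~\ref{meaningofS} (equivalently, Lemmas~\ref{lemsiginv2} and \ref{lemsiginv}) produces a rank-$1$ submodule, which is automatically nonzero and proper inside the rank-$2$ ambient module, so simplicity fails. If instead $\mathbf a=(0,2,0)$ and $\alpha\in 1+\tfrac12\mathbb Z_{\geq 0}$, then Lemma~\ref{submodlem} provides an explicit proper nonzero rank-$2$ submodule $V=(u_\alpha(h)\mathbb C[h])^{\oplus 2}$, so again simplicity fails.

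The genuine conceptual work has already been done in Lemma~\ref{submodlem}, Corollary~\ref{meaningofS}, and Proposition~\ref{submodesofrank2}. The proof of Theorem~\ref{newfamily} is essentially an assembly of those results together with the elementary rank dichotomy for $\mathbb C[h]$-submodules of $\mathbb C[h]^{\oplus 2}$; I do not anticipate a significant new obstacle here, and the proof should be quite short.
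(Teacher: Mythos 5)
Your proposal is correct and follows essentially the same route as the paper: the PID rank dichotomy for submodules of $\C[h]^{\oplus 2}$, Corollary~\ref{meaningofS} (equivalently Theorem~\ref{mainstructureS} via Lemmas~\ref{lemsiginv2} and~\ref{lemsiginv}) to handle rank-$1$ submodules, Proposition~\ref{submodesofrank2} to exclude proper rank-$2$ submodules, and Lemma~\ref{submodlem} for the exceptional $\alpha\in 1+\tfrac12\Z_{\ge0}$ case when $\mathbf a=(0,2,0)$. The paper's own proof is just a two-line assembly of these same ingredients, so your more explicit write-up matches it in substance.
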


\begin{proof}
Since $\C[h]$ is a PID, every submodule of $M \in \mathcal M(2)$ is a $U(\mathfrak{h})$-free module of rank at most $2$. The theorem follows by combining Proposition \ref{submodesofrank2} and Theorem \ref{mainstructureS}.
\end{proof}

\begin{proposition}
	For $\alpha \in\frac{1}{2} \Z_{\geq 0} +1$, and $K(h) \in \mathcal{S}$,  we have the following non-split short exact sequence:
	$$0 \longrightarrow  M\left(\alpha, (0,-2,0), K(h)\right)  \longrightarrow  M\left(\alpha, (0,2,0), K(h)\right)  \longrightarrow  L(2\alpha-2)^{\oplus 2} \longrightarrow 0$$
	where $  L(2\alpha-2)$ is the simple $(\C h \oplus \C e)$-highest weight module of weight $2\alpha-2$.
\end{proposition}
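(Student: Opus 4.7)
The proof has three parts, which I would handle in sequence: identify the left map, identify the quotient, and verify nonsplitting.

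\textbf{Left map.} Lemma~\ref{submodlem} already gives $V=(u_\alpha(h)\mathbb C[h])^{\oplus 2}$ as a rank-$2$ submodule of $M(\alpha,(0,2,0),K(h))$. I would identify $V$ with $M(\alpha,(0,-2,0),K(h))$ via the $\mathbb C[h]$-linear bijection $\Psi\colon \vec g\mapsto u_\alpha(h)\vec g$. From Definition~\ref{tripledef} one reads $P_{((0,-2,0),\alpha)}(h)=(h+\alpha)\I_2$ and $\overline P_{((0,-2,0),\alpha)}(h)=-(h-\alpha+1)\I_2$, and a direct comparison of the action formulas for $M(\alpha,(0,-2,0),K(h))$ supplied by Theorem~\ref{U(h)-free-realization} with the expressions in~\eqref{submod-calculation} shows that $\Psi$ intertwines both the $e$- and $f$-actions. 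Thus $V\simeq M(\alpha,(0,-2,0),K(h))$ as $\mathfrak{sl}(2)$-modules, providing the left map of the sequence.

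\textbf{Quotient.} Let $Q=M(\alpha,(0,2,0),K(h))/V$. As a $\mathbb C[h]$-module, $Q\simeq(\mathbb C[h]/u_\alpha(h))^{\oplus 2}$, so $\dim_\mathbb{C} Q=2(2\alpha-1)=4\alpha-2$ and $h$ acts semisimply with eigenvalues exactly the roots $\{1-\alpha,2-\alpha,\dots,\alpha-1\}$ of $u_\alpha(h)$, each of multiplicity two. Since $Q$ is finite-dimensional, Weyl's complete-reducibility theorem decomposes $Q$ into a direct sum of simple finite-dimensional $\mathfrak{sl}(2)$-modules. Each simple $L(n)$ has $h$-weights $\{n/2,n/2-1,\dots,-n/2\}$ with multiplicity one, so matching the weight multiset of $Q$ forces exactly two copies of $L(2\alpha-2)$ and no further summands: the top $h$-weight $\alpha-1$ appears with multiplicity two, each such summand must have highest $h$-weight equal to $\alpha-1$ (i.e.\ be isomorphic to $L(2\alpha-2)$), and two copies already exhaust the $\mathbb C$-dimension $4\alpha-2$. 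No highest weight vectors need to be exhibited explicitly; the weight census settles the decomposition.

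\textbf{Nonsplitting.} If the sequence split, $M(\alpha,(0,2,0),K(h))$ would contain a nonzero $\mathfrak{sl}(2)$-submodule isomorphic to $L(2\alpha-2)^{\oplus 2}$, in particular a nonzero $\mathbb C$-finite-dimensional, hence $\mathbb C[h]$-torsion, subspace. But $M(\alpha,(0,2,0),K(h))\simeq\mathbb C[h]^{\oplus 2}$ is free, and in particular torsion-free, over $U(\mathfrak h)=\mathbb C[h]$; this contradiction rules out any splitting. The only piece that requires real care is Step~1, namely matching the two sets of action formulas through the $u_\alpha(h)$-rescaling (a computation essentially bundled into Lemma~\ref{submodlem}); the complete-reducibility step and the torsion argument then finish the proof without further input.
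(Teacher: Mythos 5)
Your proposal is correct and follows the same overall architecture as the paper's proof: both start from Lemma~\ref{submodlem}, identify $V=(u_\alpha(h)\C[h])^{\oplus 2}$ with the rank-$2$ module of type $(0,-2,0)$ via the action formulas in~\eqref{submod-calculation}, and then analyze the $(4\alpha-2)$-dimensional quotient. The differences are in the details of the quotient step and in what is made explicit. The paper identifies $V$ as $M(1-\alpha,(0,2,0),K(h))=M(\alpha,(0,-2,0),K(h))$ and invokes Theorem~\ref{newfamily} to conclude $V$ is simple, then exhibits two explicit highest-weight vectors $v_1,v_2$ in $Q$ (images of $\prod_{j=0}^{2\alpha-3}(h+\alpha-1-j)\,e_i$) annihilated by $e$ and of weight $2\alpha-2$, and concludes $Q\simeq L(2\alpha-2)^{\oplus 2}$ by dimension count. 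You instead read off $P_{((0,-2,0),\alpha)}$ and $\overline P_{((0,-2,0),\alpha)}$ from Definition~\ref{tripledef} and match them against~\eqref{submod-calculation} directly, and for the quotient you replace the explicit highest-weight vectors by Weyl complete reducibility together with a weight-multiplicity census (the eigenvalues of $h$ on $(\C[h]/(u_\alpha))^{\oplus 2}$ are exactly $1-\alpha,\dots,\alpha-1$, each of multiplicity two, since $u_\alpha$ has simple roots); this correctly forces two copies of $L(2\alpha-2)$ and nothing else. Your census argument is slightly less computational and equally rigorous. Finally, your torsion-freeness argument for non-splitting (a complement would be a nonzero finite-dimensional, hence $\C[h]$-torsion, submodule of a free $\C[h]$-module) is a genuine addition: the paper asserts non-splitness in the statement but does not argue it in the proof, so your Step~3 actually fills a small gap in the published argument.
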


\begin{proof}
By Lemma \ref{submodlem}, the subspace $V=\left(u_{\alpha}(h)\C[h]\right)^{\oplus2}$, where $u_{\alpha}(h)=\prod_{j=0}^{2\alpha-2}(h+\alpha-1-j)$ is a proper submodule of $M\left(\alpha, (0,2,0), K(h)\right)$. Moreover, \eqref{submod-calculation} shows
$$V \simeq  M\left(1-\alpha, (0,2,0), K(h)\right)= M\left(\alpha, (0,-2,0), K(h)\right).$$

By Theorem \ref{newfamily}, $M\left(1-\alpha, (0,2,0), K(h)\right)$ is simple and hence, the submodule $V$ is simple. Consider the quotient  $Q := M\left(\alpha, (0,2,0), K(h)\right) \big/ V.$
Let 
$$v_1 \;:=\;\begin{pmatrix}\prod_{j=0}^{2\alpha-3}\bigl(h+\alpha-1-j\bigr)\\ 0\end{pmatrix}\;+\; V ,\quad v_2 \;:=\;\begin{pmatrix}0\\ \prod_{j=0}^{2\alpha-3}\bigl(h+\alpha-1-j\bigr)\end{pmatrix}\;+\; V. $$

A direct computation shows 
$$ e\cdot v_i =  0, \quad \text{and} \quad 2h \cdot v_i =  2(\alpha -1)v_i \quad \text{for}\quad i \in \{1,2\}.$$

Therefore, $v_1, v_2$ are highest weight vectors of weight $2(\alpha -1)$. Since $\dim Q = 4\alpha -2$, $Q \simeq L(2\alpha-2)^{\oplus 2}$. This proves the Proposition.
\end{proof}

\subsection{Isomorphism theorem and the group $G$}
In this subsection, we establish the isomorphism theorem for modules of the form
$$
  M\left(\alpha, \mathbf a, K(h)\right),
  \qquad
  \mathbf a\in\{(2,0,0),(0,2,0),(0,0,2)\},\ \alpha\in\C,\ K(h)\in\mathcal S.
$$
In particular, we have an isomorphism theorem for all simple scalar-type rank-2 modules. 

\begin{remark}
By Theorem \ref{mainstructureS}, it suffices to assume that $K(h) =\mathbf{E}_{(a,b)}(u_1,\dots,u_k)$, where  $k \in \Z_{\geq 1}$, $a, b\in\C^*,\;u_1,\dots,u_k \in \C[h]\setminus\C$.
\end{remark}
\begin{definition} \label{nice-action}
Fix $k \in \Z_{\geq 1}$ and set $\mathcal{P}_k:=(\C^*)^2\times(\C[h])^k$. For $(\beta_1,\beta_2,\,u_1,\dots,u_k)\in\mathcal{P}_k$ and $\eta \in \C^*$, define maps $T, S_\eta : \mathcal{P}_k \to \mathcal{P}_k$ by
$$
T\big((\beta_1,\beta_2,\,u_1,\dots,u_k)\big)
\;:=\;
\big(\beta_2,\beta_1, \, \tfrac{\beta_1}{\beta_2}\,\sigma(u_k),\, u_1,\, \dots,\ u_{k-1}\big),
$$
$$S_\eta\big((\beta_1,\beta_2,\,u_1,\dots,u_k)\big):= \big(\beta_1 \eta^{k\bmod 2},\;\beta_2\eta^{-(k\bmod 2)},\;\eta^{\varepsilon_1}u_1,\dots,\eta^{\varepsilon_k}u_k \big),$$
where $\varepsilon_i:=(-1)^{\,i-1+k}$.
\end{definition}

\begin{remark}
It is immediate from the definition that $T$ is bijective. In fact,
$$
T^{-1}\big((\beta_1,\beta_2,\,u_1,\dots,u_k)\big)
\;:=\;
\bigl(\beta_2,\beta_1,\, u_2,\, \dots,\, u_k, \tfrac{\beta_1}{\beta_2} \sigma^{-1}(u_1)\bigr).
$$

Consequently, $T$ generates a $\Z$-action on $\mathcal P_k$ by
$$n\cdot (\mathbf X):=T^{\,n}(\mathbf X),\qquad\text{where}\quad n\in \Z,\;\mathbf X \in \mathcal{P}_k.$$
\end{remark}

For $m\in \Z$, define $\phi_m \in \Aut(\C^*)$ by $\phi_m(\eta) := \eta^{(-1)^{m}}$, for all $\eta\in \C^*$. Then the map
$$\Phi: \Z \to \Aut(\C^*)\qquad\text{where}\qquad m \mapsto \phi_m, $$
is a group homomorphism. 

\begin{lemma} \label{commuting_1}
For all $m\in\Z$ and $\eta\in\C^*$, $T^{m}S_{\eta}T^{-m} = S_{\phi_{m}(\eta)}$.
\end{lemma}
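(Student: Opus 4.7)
The plan is to reduce the general identity to the case $m=1$ (equivalently, $m=-1$) by induction, using that $\Phi\colon\Z\to\Aut(\C^*)$ is a group homomorphism. Indeed, once one establishes $TS_\eta T^{-1}=S_{\phi_1(\eta)}=S_{\eta^{-1}}$ for every $\eta\in\C^*$, one obtains
\[
T^{m+1}S_\eta T^{-(m+1)}=T\bigl(T^{m}S_\eta T^{-m}\bigr)T^{-1}=TS_{\phi_m(\eta)}T^{-1}=S_{\phi_1(\phi_m(\eta))}=S_{\phi_{m+1}(\eta)},
\]
and similarly $T^{-1}(T^mS_\eta T^{-m})T=S_{\phi_{m-1}(\eta)}$, which gives the claim for all $m\in\Z$. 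The base case $m=0$ is trivial. So the whole proof reduces to the single direct verification of $TS_\eta T^{-1}=S_{\eta^{-1}}$.

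For that verification I would apply $T^{-1}$, then $S_\eta$, then $T$ to a generic tuple $(\beta_1,\beta_2,u_1,\dots,u_k)\in\mathcal P_k$ and match coordinates against $S_{\eta^{-1}}(\beta_1,\beta_2,u_1,\dots,u_k)$. Using the formulas from Definition~\ref{nice-action},
\[
S_\eta T^{-1}(\beta_1,\beta_2,u_1,\dots,u_k)=\bigl(\beta_2\eta^{k\bmod 2},\;\beta_1\eta^{-(k\bmod 2)},\;\eta^{\varepsilon_1}u_2,\dots,\eta^{\varepsilon_{k-1}}u_k,\;\eta^{\varepsilon_k}\tfrac{\beta_1}{\beta_2}\sigma^{-1}(u_1)\bigr),
\]
and then applying $T$ swaps the two scalars and moves the last entry to the front with a factor $\beta_1'/\beta_2'=\beta_2\eta^{k\bmod 2}/(\beta_1\eta^{-(k\bmod 2)})$ together with a $\sigma$. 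The first two scalar coordinates become $\beta_1\eta^{-(k\bmod 2)}$ and $\beta_2\eta^{k\bmod 2}$, matching $S_{\eta^{-1}}$ immediately.

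The main bookkeeping is in the polynomial coordinates. The $\sigma$ applied by $T$ cancels the $\sigma^{-1}$ coming from $T^{-1}$, and the ratio $\beta_1/\beta_2$ cancels its inverse, leaving $\eta^{2(k\bmod 2)+\varepsilon_k}u_1$ in the first polynomial slot and $\eta^{\varepsilon_{j-1}}u_j$ in the $j$-th slot for $2\le j\le k$. Comparing with the target exponents $\eta^{-\varepsilon_j}$, one uses the two elementary identities
\[
\varepsilon_{j-1}=-\varepsilon_j\quad(\text{consecutive signs alternate}),\qquad \varepsilon_k=-1,\qquad \varepsilon_1=(-1)^k,
\]
together with a case split on the parity of $k$ to see that $2(k\bmod 2)+\varepsilon_k=-\varepsilon_1$. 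This makes every coordinate match and completes the base case.

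The main obstacle, as is typical for such conjugation identities, is purely the sign/parity bookkeeping: one must keep track of the $k\bmod 2$ exponents on the scalars, the alternating $\varepsilon_i$, and the one special slot where the shift automorphism appears. No further ideas are needed beyond the homomorphism property of $\Phi$ and the two identities on $\varepsilon_i$ above.
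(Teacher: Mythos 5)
Your proposal is correct and follows essentially the same route as the paper: reduce to the base case $m=1$ by induction (using that $\Phi$ is a homomorphism and handling negative $m$ by inverting the $m=1$ identity), and verify $TS_\eta T^{-1}=S_{\eta^{-1}}$ by applying the three maps to a generic tuple and matching coordinates via the sign identities $\varepsilon_{j-1}=-\varepsilon_j$ and $2(k\bmod 2)+\varepsilon_k=-\varepsilon_1$. Your bookkeeping of the extra $\eta^{2(k\bmod 2)}$ factor in the first polynomial slot is in fact slightly more careful than the paper's displayed computation.
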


\begin{proof}
The case $m=0$ is immediate. For $m\geq 1$, we proceed by induction on $m$. For $m =1$, let $(\beta_1,\beta_2,\,u_1,\dots,u_k) \in \mathcal{P}_k$, then
$$\begin{aligned}
TS_{\eta}T^{-1} \big( (\beta_1,\beta_2,\,u_1,\dots,u_k)\big) &=TS_{\eta} \left(\bigl(\beta_2,\beta_1,\, u_2,\, \dots,\, u_k, \tfrac{\beta_1}{\beta_2} \sigma^{-1}(u_1)\bigr) \right)\\
&=T \left(\bigl(\beta_2 \eta^{k\bmod 2}, \beta_1 \eta^{-k\bmod 2}, \, \eta^{\varepsilon_1} u_2,\, \dots,\, \eta^{\varepsilon_{k-1}}u_k, \eta^{\varepsilon_k} \tfrac{\beta_1}{\beta_2} \sigma^{-1}(u_1)\bigr) \right) \\
&= \bigl(\beta_1\eta^{-k\bmod 2}, \beta_2 \eta^{k\bmod 2} ,\ \eta^{\varepsilon_k}u_1 , \, \eta^{\varepsilon_1} u_2,\, \dots,\, \eta^{\varepsilon_{k-1}}u_k \bigr).
\end{aligned}$$

Since $\varepsilon_k=-\varepsilon_1$ and, for $2\leq i\leq k$, $\varepsilon_{i-1}=-\varepsilon_i$, then 
$$\begin{aligned}
TS_{\eta}T^{-1} \big( (\beta_1,\beta_2,\,u_1,\dots,u_k)\big) &= \left(\beta_1 \left(\eta^{-1}\right)^{k\bmod 2}, \beta_2 \left(\eta^{-1}\right)^{-k\bmod 2} , \, \left(\eta^{-1}\right)^{\varepsilon_1}u_1,\dots,\left(\eta^{-1}\right)^{\varepsilon_k}u_k \right)\\
&= S_{\eta^{-1}}  \big( (\beta_1,\beta_2,\,u_1,\dots,u_k)\big) =  S_{\phi_{1}(\eta)}\big( (\beta_1,\beta_2,\,u_1,\dots,u_k)\big) .
\end{aligned}$$

This proves the base case $m=1$. The induction step is relatively easy: 
$$ T^{m+1}S_{\eta}T^{-(m+1)} = T S_{\phi_{m}(\eta)} T^{-1} =S_{\phi_{1}\left(\phi_{m}(\eta)\right)} = S_{\phi_{m+1}(\eta)}. $$

It remains to consider $m \leq -1$. In this case we apply the above for $(-m) \geq 1$ and obtain 
$$T^{-m}S_{\phi_m(\eta)}T^{m} = S_{\phi_{-m}\left(\phi_m(\eta)\right)} = S_{\eta}. $$
\end{proof}

\begin{proposition}
The following 
 $$ (\eta, m) \star \mathbf X := S_\eta \big( T^m (\mathbf X) \big),\qquad\text{where}\quad
(\eta,m)\in \C^* \rtimes_{\Phi} \Z,\;\; \mathbf{X}\in \mathcal{P}_k,$$
 defines an action of the group  $\C^* \rtimes_{\Phi} \Z$ on $\mathcal{P}_k$.
\end{proposition}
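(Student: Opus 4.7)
The plan is to verify the two defining axioms of a group action: triviality of the identity and compatibility with multiplication in $\C^*\rtimes_{\Phi}\Z$. First I would check that $(1,0)\star \mathbf X=\mathbf X$ for every $\mathbf X\in\mathcal P_k$. Since $T^{0}=\mathrm{id}$ and the very definition of $S_\eta$ collapses to the identity when $\eta=1$ (the factors $1^{k\bmod 2}$ and $1^{\varepsilon_i}$ are all equal to $1$), this is immediate.

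Next I would establish two auxiliary facts that together give multiplicativity. The first is that $\eta\mapsto S_\eta$ is a homomorphism from $\C^*$ into $\mathrm{Sym}(\mathcal P_k)$, i.e.\ $S_{\eta_1}S_{\eta_2}=S_{\eta_1\eta_2}$. This is a direct computation component-by-component from Definition~\ref{nice-action}: the first coordinate picks up $\eta_1^{\,k\bmod 2}\eta_2^{\,k\bmod 2}=(\eta_1\eta_2)^{\,k\bmod 2}$, similarly for the second coordinate with exponent $-(k\bmod 2)$, and each $u_i$ is scaled by $\eta_1^{\,\varepsilon_i}\eta_2^{\,\varepsilon_i}=(\eta_1\eta_2)^{\,\varepsilon_i}$. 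The second fact is already available: Lemma~\ref{commuting_1} gives the intertwining relation $T^{m}S_{\eta}=S_{\phi_m(\eta)}T^{m}$ for every $m\in\Z$ and $\eta\in\C^*$.

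With these in place, the compatibility step is a short chain of rewrites. Recall that in $\C^*\rtimes_{\Phi}\Z$,
\[
(\eta_1,m_1)\cdot(\eta_2,m_2)=\bigl(\eta_1\,\phi_{m_1}(\eta_2),\,m_1+m_2\bigr)=\bigl(\eta_1\,\eta_2^{(-1)^{m_1}},\,m_1+m_2\bigr).
\]
On the one hand,
\[
\bigl((\eta_1,m_1)\cdot(\eta_2,m_2)\bigr)\star \mathbf X
= S_{\eta_1\eta_2^{(-1)^{m_1}}}\bigl(T^{m_1+m_2}(\mathbf X)\bigr).
\]
On the other hand, using Lemma~\ref{commuting_1} to push $T^{m_1}$ past $S_{\eta_2}$, and then the homomorphism property $S_{\eta_1}S_{\phi_{m_1}(\eta_2)}=S_{\eta_1\phi_{m_1}(\eta_2)}$,
\[
(\eta_1,m_1)\star\bigl((\eta_2,m_2)\star \mathbf X\bigr)
=S_{\eta_1}T^{m_1}S_{\eta_2}T^{m_2}(\mathbf X)
=S_{\eta_1}S_{\phi_{m_1}(\eta_2)}T^{m_1+m_2}(\mathbf X)
=S_{\eta_1\phi_{m_1}(\eta_2)}T^{m_1+m_2}(\mathbf X).
\]
Since $\phi_{m_1}(\eta_2)=\eta_2^{(-1)^{m_1}}$, the two expressions agree, completing the verification.

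I expect no genuine obstacle here: the whole statement reduces, via Lemma~\ref{commuting_1}, to bookkeeping about exponents of $\eta$ and the parity-dependent signs $\varepsilon_i$ and $k\bmod 2$. The only place where care is needed is the homomorphism identity $S_{\eta_1}S_{\eta_2}=S_{\eta_1\eta_2}$, since one must check that the exponent patterns $\{k\bmod 2,\,-(k\bmod 2),\,\varepsilon_1,\dots,\varepsilon_k\}$ are preserved under composition rather than altered; but this is immediate because these exponents depend only on $k$ and the coordinate index, not on $\eta$.
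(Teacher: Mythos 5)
Your proposal is correct and follows essentially the same route as the paper: check that $(1,0)$ acts trivially, then use Lemma~\ref{commuting_1} to push $T^{m_1}$ past $S_{\eta_2}$ and conclude compatibility with the semidirect product multiplication. The only difference is that you spell out the homomorphism identity $S_{\eta_1}S_{\eta_2}=S_{\eta_1\eta_2}$, which the paper uses implicitly in its final step; this is a harmless (and arguably welcome) bit of extra care.
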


\begin{proof}
The identity element $(1,0)$ acts trivially since $S_1=T^0=\mathrm{id}$. For $(\eta_i,m_i)\in \C^* \rtimes_{\Phi} \Z$ with $i \in \{1,2\}$, applying Lemma~\ref{commuting_1} gives
\begin{multline*}
(\eta_1,m_1)\star\big((\eta_2,m_2)\star \mathbf{X}\big)
= S_{\eta_1}\,T^{m_1}\,S_{\eta_2}\,T^{m_2}\mathbf{X}
= S_{\eta_1}\,S_{\phi_{m_1}(\eta_2)}\,T^{m_1+m_2}\mathbf{X}\\
= \bigl(\eta_1\phi_{m_1}(\eta_2),m_1+m_2\bigr)\star \mathbf{X}.
\end{multline*}

This proves the proposition.
\end{proof}

\begin{definition} We define 
 $G := \C^* \rtimes_{\Phi} \Z$ and $\Orb_{G}(\mathbf X):=\big\{\,(\eta, n) \star \mathbf X \;\mid\; \eta\in\C^*,\ n\in\Z\,\big\}$. 
\end{definition}

\begin{proposition} \label{sigmasimscalartype}
Let $\beta_1, \beta_2, \gamma_1, \gamma_2\in\C^*$, $k,m \in \Z_{\geq 1}$, and $u_1,\dots,u_k, v_1,\dots,v_m \in\C[h]\setminus\C$. Then
$$
\mathbf{E}_{(\beta_1,\beta_2)}(u_1,\dots,u_k)\sim_{\sigma^{-1}}
\mathbf{E}_{(\gamma_1,\gamma_2)}(v_1,\dots,v_m)
$$
if and only if $m=k$, and $(\gamma_1,\gamma_2,\,v_1,\dots,v_k)\in \Orb_{G}\big( \beta_1,\beta_2,\, u_1,\dots,u_k\big)$. 
\end{proposition}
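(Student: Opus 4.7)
My plan is to prove the two directions separately: the ``if'' part by directly realizing the generators of $G$ as explicit $\sigma^{-1}$-conjugators, and the ``only if'' part by combining the length rigidity of Proposition~\ref{sigmashorten} with the case analysis of Lemma~\ref{lengthlemma}.

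For the ``if'' direction, since $G=\C^*\rtimes_{\Phi}\Z$ is generated by $T$ and $\{S_{\eta}\}_{\eta\in\C^*}$, it suffices to verify both. To realize $T$, I would take $P(h):=E(u_k(h-1))^{-1}$; then $P(h+1)=E(u_k(h))^{-1}$ cancels the rightmost $E(u_k(h))$ in $\mathbf{E}_{(\beta_1,\beta_2)}(u_1,\ldots,u_k)$, leaving $E(u_k(h-1))\,\diag(\beta_1,\beta_2)\,E(u_1)\cdots E(u_{k-1})$, and Lemma~\ref{shortenlem}(iii) moves the diagonal past $E(u_k(h-1))$ to recover $\mathbf{E}_{(\beta_2,\beta_1)}\bigl(\tfrac{\beta_1}{\beta_2}\sigma(u_k), u_1,\ldots,u_{k-1}\bigr)$, which is precisely $\mathbf{E}$ applied to $T\cdot(\beta_1,\beta_2,u_1,\ldots,u_k)$. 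To realize $S_{\eta}$, I would take the constant $P:=\diag(1,\eta)\in\GL_2(\C)$; constancy identifies $\sigma^{-1}$-conjugation with ordinary conjugation, and iterating Lemma~\ref{shortenlem}(iii) sweeps $P$ rightward through each $E(u_i)$, generating at step $i$ the rescaling $\eta^{\varepsilon_i}$ of $u_i$ and, after $k$ steps, leaving precisely the diagonal prescribed by $S_{\eta}$ (with the parity correction $k\bmod 2$).

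For the ``only if'' direction, both matrices are already in reduced Cohn standard form, so Proposition~\ref{sigmashorten} forces $m\ge k$ and, by symmetry, $k\ge m$, giving $m=k$. For the tuple equivalence, I would write any conjugator $P(h)$ in standard form $\mathbf{E}_{(\rho_1,\rho_2)}(w_1,\ldots,w_p)$. The constant diagonal factor $\diag(\rho_1,\rho_2)$ contributes exactly the $S_{\eta}$-effect with $\eta=\rho_2/\rho_1$, so absorbing it into the $G$-orbit reduces to $\rho_1=\rho_2=1$. Then Lemma~\ref{lengthlemma} excludes both $w_1\in\C^*$ and $w_1\in\C[h]\setminus\C$, since each of its three cases yields conjugate length $\ge k+1$, contradicting the length $k$ of the right-hand side. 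The remaining possibilities are $p=0$ (already absorbed via $S_{\eta}$) and $w_1=0$ with $p\ge 1$; for the latter I would invoke the boundary-$E(0)$ analysis from the proof of Proposition~\ref{sigmashorten}, which shows that length $k$ is attained only under the specific cyclic alignment $w_1=w_p=0$ and $w_{ks+r+2}=-u_{k-r}(h-(s+1))$. Each complete cyclic block of length $k$ in this pattern corresponds to one $T^{\pm 1}$ step (as already seen in the ``if'' direction with $p=3$), so $P(h)$ realizes $T^n$ for some $n\in\Z$; together with the absorbed $S_{\eta}$ this yields the claimed $G$-orbit equivalence.

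The main obstacle is the last step: showing that every length-preserving $P(h)$ with $w_1=0$ must follow the cyclic template exactly, with no spurious alternatives arising from the identities in Lemma~\ref{shortenlem}. This requires careful tracking of signs, the swap from Lemma~\ref{shortenlem}(iii), and the shift $\sigma$ across the whole product, organized by a short induction on $p$ (equivalently on $|n|$), with the $T^{\pm 1}$-realization $P(h)=E(0)E(-u_k(h-1))E(0)$ as the base case.
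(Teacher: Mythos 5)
Your overall strategy is the same as the paper's (length rigidity from Proposition~\ref{sigmashorten}, then a case analysis of the standard form of the conjugator $P(h)$), and your ``if'' direction is correct: the realizations of $T$ by $P(h)=E(u_k(h-1))^{-1}=E(0)E(-u_k(h-1))E(0)$ and of $S_\eta$ by a constant diagonal matrix both check out against Lemma~\ref{shortenlem}(iii) and Definition~\ref{nice-action}.

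However, the ``only if'' direction has a genuine gap. You claim that Lemma~\ref{lengthlemma} excludes the case $w_1\in\C[h]\setminus\C$, but parts (ii) and (iii) of that lemma carry the standing hypotheses $u_1-v_1\in\C[h]\setminus\C$ and $u_1-v_1\in\C^*$ respectively; the case $u_1-v_1=0$ is not covered by the lemma at all. And it cannot be excluded: taking $w_1=\tfrac{\eta\beta_1}{\beta_2}u_1$ (and continuing the forward cancellation $w_2=\tfrac{\beta_2}{\eta\beta_1}u_2$, etc.) gives a nonconstant $w_1$ for which the conjugate still has length $k$ --- this is exactly ``Choice~1'' in the paper's proof, and it realizes the \emph{negative} powers $T^{-p}$. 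For instance, $P(h)=E\bigl(\tfrac{\beta_1}{\beta_2}u_1\bigr)$ satisfies
\begin{equation*}
P(h)^{-1}\,\mathbf{E}_{(\beta_1,\beta_2)}(u_1,\dots,u_k)\,P(h+1)
=\mathbf{E}_{(\beta_2,\beta_1)}\bigl(u_2,\dots,u_k,\tfrac{\beta_1}{\beta_2}\sigma^{-1}(u_1)\bigr),
\end{equation*}
which is $T^{-1}$ of the tuple and has $w_1=\tfrac{\beta_1}{\beta_2}u_1\notin\C$. Your cyclic template with $w_1=0$ only produces $T^{n}$ with $n=p-2\ge 1$ (your ``$T^{\pm1}$'' is not right: that template is one-sided), so your case analysis silently discards an entire family of length-preserving conjugators. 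The conclusion happens to survive because Choice~1 also lands in the $G$-orbit, but your proof does not establish this; you must add the case $w_1=\eta u_1$ (up to the diagonal twist) and show it forces the forward-cancellation pattern $w_{ks+j}=\eta^{\varepsilon_j}u_j(h+s)$ (suitably rescaled), yielding $T^{-p}$, exactly as in the second half of the proof of Proposition~\ref{sigmashorten}.
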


\begin{proof}
By Proposition \ref{sigmashorten},
$$\mathbf{E}_{(\beta_1,\beta_2)}(u_1,\dots,u_k) \sim_{\sigma^{-1}} \mathbf{E}_{(\gamma_1, \gamma_2)}(v_1,\dots,v_m)$$
implies $k=m$.
Then there exists a matrix $P(h)\in \GL_2(\C[h])$ in standard form,
$$
P(h) = \diag(\rho_1, \rho_2)\quad \text{or}\quad P(h)=\mathbf{E}_{(\rho_1,\rho_2)}(w_1,\dots,w_p),\qquad \rho_1,\rho_2 \in\C^*,
$$
with $w_1,w_p\in\C[h]$ and $w_i\in \C[h]\setminus\C$ for $1<i<p$, such that
$$
\mathbf{E}_{(\gamma_1,\gamma_2)}(v_1,\dots,v_k)
= P(h)^{-1}\,\mathbf{E}_{(\beta_1,\beta_2)}(u_1,\dots,u_k)\,P(h+1).
$$

We determine the $w_i(h)$ for which the length is preserved under $\sigma^{-1}$–conjugation by $P(h)$, namely, $ \;\ell\left(P(h)^{-1}\,\mathbf{E}_{(\beta_1,\beta_2)}(u_1,\dots,u_k)\,P(h+1)\right)=k$. Set $\eta:=\tfrac{\rho_2}{\rho_1}$. A direct computation yields
$$
\begin{aligned}
&\diag(\rho_1, \rho_2)^{-1}\mathbf{E}_{(\beta_1,\beta_2)}(u_1,\dots,u_k)\diag(\rho_1, \rho_2)\\
=&\hspace{0.5cm} \mathbf{E}_{\big(\beta_1 \eta^{k\bmod 2},\;\beta_2\eta^{-(k\bmod 2)}\big)} \big(\eta^{\varepsilon_1}u_1,\dots,\eta^{\varepsilon_k}u_k \big)\\
=&\hspace{0.5cm} \begin{cases}
\mathbf{E}_{(\beta_1,\beta_2)}\left(\eta u_1,\;\eta^{-1}u_2,\dots,\;\eta^{-1}u_k\right), & \text{if } k\ \text{even},\\
\mathbf{E}_{\left(\beta_1\eta,\;\beta_2\eta^{-1}\right)}\left( \eta^{-1} u_1,\;\eta u_2,\dots,\;\eta^{-1} u_k\right), & \text{if } k\ \text{odd}.
\end{cases}
\end{aligned}
$$

Hence, the $\sigma^{-1}$-conjugation of $\mathbf{E}_{(\beta_1,\beta_2)}(u_1,\dots,u_k)$ by $\diag(\rho_1, \rho_2)$  is equivalent to applying $S_\eta$ to $\left(\beta_1,\beta_2,\,u_1,\dots,u_k\right)$. We treat the case where $k$ is even; the case where $k$ is odd is analogous.
\begin{multline*}
P(h)^{-1}\mathbf{E}_{(\beta_1,\beta_2)}(u_1,\dots,u_k) P(h+1) = (-1)^p \diag(\beta_1,\beta_2)_{[p]}\,E(0)\left(\mathop{\overleftarrow{\prod}}_{\ell=2}^{\,p} E\left(-\Bigl(\tfrac{\beta_1}{\beta_2}\Bigr)^{(-1)^\ell}w_\ell\right) \right)\\
\cdot E\left(-\tfrac{\beta_2}{\beta_1}w_1+\eta u_1\right)E\left(\eta^{-1}u_2\right)\dots E\left(\eta^{-1} u_k\right)\, \left(\prod_{i=1}^p E(w_i(h+1))\right).
\end{multline*}

Following the same argument as in the proof of Proposition~\ref{sigmashorten}, the length of the matrix $\mathbf{E}_{(\beta_1,\beta_2)}(u_1,\dots,u_k)$ is preserved for exactly two choices of $(w_1,w_2,...,w_p)$, $p \geq 1$: 

\begin{itemize}
\item[\textbf{Choice 1:}] 
$
w_1=\tfrac{\eta\beta_1}{\beta_2}\,u_1,\qquad w_2=\tfrac{\beta_2}{\eta\beta_1}\,u_2,\qquad 
w_3=\begin{cases}\eta\left(\tfrac{\beta_1}{\beta_2}\right)^2 u_1(h+1), & k=2,\\ \tfrac{\eta\beta_1}{\beta_2}\,u_3, & k>3, \end{cases}\quad \dots \quad (p \geq 1);
$

\item[\textbf{Choice 2:}] 
$w_1=0,\quad w_2=-\eta^{-1}u_k(h-1),\quad w_3=-\eta\,u_{k-1}(h-1),\qquad \dots, \qquad w_p=0 \quad (p \geq 3).
$
\end{itemize}

Let $j$ be defined by
$$
j :=
\begin{cases}
-p, & \text{in Choice 1},\\
p-2, & \text{in Choice 2}.
\end{cases}
$$

With the above choices of $\{w_i: 1 \leq i \leq p\}$, one checks directly that the resulting matrix coincides with the image of
$$
\left(\beta_1,\beta_2,\, \eta u_1,\eta^{-1}u_2,\dots,\eta^{-1}u_k\right)
$$
under $T^j$ for some $j\in\mathbb{Z}$; the case $j=0$ corresponds to the case $P(h)=\diag(\rho_1,\rho_2)$. Therefore, 
$$\mathbf{E}_{(\beta_1,\beta_1)}(u_1,\dots,u_k)\sim_{\sigma^{-1}}\mathbf{E}_{(\gamma_1,\gamma_2)}( v_1,\dots,v_k)$$
if and only if there exist $\eta\in\C^*$ and $j\in\Z$ such that 
$$ (\gamma_1,\gamma_2,\,v_1,\dots,v_k) = T^j\big( S_\eta(\beta_1,\beta_2,\, u_1,\dots,u_k)\big).$$
To complete the proof, we note that for $\mathbf X, \mathbf Y \in \mathcal P_k$,
$$\exists\,\eta\in\C^*,\, j\in\Z:\, \mathbf Y = T^j\big( S_\eta(\mathbf X)\big)\quad \Longleftrightarrow \quad  \mathbf Y \in \Orb_G\left(\mathbf X\right).$$
\end{proof}

\begin{theorem} \label{isomorphismtheorem}
Let $c_1,c_2,d_1,d_2\in\C^*$, $k,m\in\Z_{\ge1}$, $u_1,\dots,u_k,v_1,\dots,v_m\in\C[h]\setminus \C$, $\mathbf a, \mathbf b\in \big\{(2,0,0),(0,2,0),(0,0,2)\big\}$, and $\alpha, \beta \in\C$. Then 
$$ M\left(\alpha, \mathbf a, \mathbf{E}_{(c_1,d_1)}(u_1,\dots,u_k)\right) \simeq M\left(\beta, \mathbf b, \mathbf{E}_{(c_2, d_2)}(v_1,\dots,v_m)\right)$$
if and only if
$$
\alpha=\beta,\quad \mathbf a=\mathbf b,\quad m=k,\quad\text{and}\;\;\;(c_2,d_2,\,v_1,\dots,v_k)\in \Orb_{G}\big((c_1,d_1,\;u_1,\dots,u_k)
\big).
$$
\end{theorem}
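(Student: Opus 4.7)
The plan is to combine Theorem~\ref{isomorphismthm}, which is the general isomorphism criterion in $\mathcal M$, with Proposition~\ref{sigmasimscalartype}, which translates $\sigma^{-1}$--similarity of matrices in standard form into a $G$--orbit condition on their parameter tuples.

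First I would observe that for $\mathbf a\in\{(2,0,0),(0,2,0),(0,0,2)\}$ the matrix $P_{(\mathbf a,\alpha)}(h)$ is a scalar polynomial matrix $p_\alpha(h)\,\I_2$, where $p_\alpha$ equals $1$, $h-\alpha+1$, or $(h-\alpha+1)(h+\alpha)$, respectively. Hence the intertwining equation
\[
P(h)\,P_{(\mathbf a,\alpha)}(h)\,K_2(h) \;=\; P_{(\mathbf a,\alpha)}(h)\,K_1(h)\,P(h+1)
\]
from the second bullet of Theorem~\ref{isomorphismthm} factors as $p_\alpha(h)\bigl(P(h)K_2(h)-K_1(h)P(h+1)\bigr)=0$. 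Since $\C[h]$ is an integral domain, cancelling the nonzero scalar $p_\alpha(h)$ shows this is equivalent to $K_1(h)\sim_{\sigma^{-1}}K_2(h)$ in $\GL_2(\C[h])$. Thus the scalar-type assumption collapses condition~(2) of Theorem~\ref{isomorphismthm} to a pure $\sigma^{-1}$--similarity statement on the two matrices $K_i$.

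Next, I would apply Proposition~\ref{sigmasimscalartype} to $\mathbf{E}_{(c_1,d_1)}(u_1,\dots,u_k)\sim_{\sigma^{-1}}\mathbf{E}_{(c_2,d_2)}(v_1,\dots,v_m)$, which yields at once $m=k$ together with $(c_2,d_2,v_1,\dots,v_k)\in\Orb_G\bigl((c_1,d_1,u_1,\dots,u_k)\bigr)$. For the first bullet of Theorem~\ref{isomorphismthm}, I would treat each scalar-type triple separately. When $\mathbf a=(0,2,0)$, the alternative $(a_-,-a_0,a_+)=(0,-2,0)$ is not among the three allowed values of $\mathbf b$, so that option is excluded and one is forced into $\alpha=\beta$ and $\mathbf a=\mathbf b$. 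When $\mathbf a\in\{(2,0,0),(0,0,2)\}$, the alternative triple equals $\mathbf a$ itself, so both options give $\mathbf b=\mathbf a$; the residual $\alpha\leftrightarrow 1-\alpha$ symmetry is absorbed into the identification $\alpha=\beta$ by observing that both $P_{(\mathbf a,\alpha)}(h)$ and $\overline P_{(\mathbf a,\alpha)}(h)$ are invariant under this swap, so the actions of $e,f$ from Theorem~\ref{U(h)-free-realization} coincide and $M(\alpha,\mathbf a,K)=M(1-\alpha,\mathbf a,K)$ as $\mathfrak{sl}(2)$--modules.

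The main obstacle is the last step---the careful matching of the two options of the first bullet of Theorem~\ref{isomorphismthm} against the three scalar-type triples, since the $\alpha\leftrightarrow 1-\alpha$ redundancy must be ruled out for $(0,2,0)$ by set membership but absorbed into an equality of modules for $(2,0,0)$ and $(0,0,2)$. Once this case analysis is in place, the equivalence of the theorem follows by running the reductions in both directions: the ``only if'' direction uses the nontrivial half of Proposition~\ref{sigmasimscalartype}, and the ``if'' direction is a direct verification that the orbit condition on the $G$--tuple, combined with $\alpha=\beta$ and $\mathbf a=\mathbf b$, produces a $\sigma^{-1}$--similarity in $\GL_2(\C[h])$ and hence an isomorphism via Theorem~\ref{isomorphismthm}.
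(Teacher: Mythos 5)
Your proposal follows the paper's proof exactly: the paper's argument for this theorem is a one-line citation of Theorem~\ref{isomorphismthm} together with Proposition~\ref{sigmasimscalartype}, which is precisely the combination you spell out (scalar $P_{(\mathbf a,\alpha)}$ reduces the intertwining condition to $\sigma^{-1}$-similarity of the $K$'s, and the proposition converts that into the $G$-orbit condition). Your extra care with the first bullet of Theorem~\ref{isomorphismthm} --- excluding $(0,-2,0)$ as a value of $\mathbf b$, and observing that $M(\alpha,\mathbf a,K)=M(1-\alpha,\mathbf a,K)$ when $\mathbf a\in\{(2,0,0),(0,0,2)\}$ so the $\alpha+\beta=1$ alternative collapses --- is a correct filling-in of details that the paper's proof leaves implicit.
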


\begin{proof}
The theorem follows directly from Theorem \ref{isomorphismthm} and Proposition \ref{sigmasimscalartype}.
\end{proof}

\begin{corollary}
Let $\mathbf a  \in \big\{(2,0,0),(0,0,2)\big\}$ and $\alpha\in \C$; or let $\mathbf a = (0,2,0)$ and  $\alpha \in  \C\setminus \left(\tfrac{1}{2}\mathbb{Z}_{\geq 0}+1 \right) $. There is a natural bijection
$$
\left\{\text{isomorphism classes of simple modules}\; \left[M\big(\alpha, \mathbf a, K(h) \big)\right]\right\} 
\;\xleftrightarrow{\text{\scriptsize 1:1}}\;
\bigsqcup_{k\geq 1} \mathcal{P}_k/G_.
$$
\end{corollary}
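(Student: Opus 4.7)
The plan is to construct the bijection explicitly and then verify both directions using only the results already established in the excerpt, with Theorem~\ref{newfamily}, Theorem~\ref{mainstructureS}, and Theorem~\ref{isomorphismtheorem} doing essentially all the work.

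First, I would fix $\alpha$ and $\mathbf a$ as in the statement, so that by Theorem~\ref{newfamily} the simple modules of the form $M(\alpha,\mathbf a,K(h))$ are exactly those with $K(h)\in\mathcal S$. By Theorem~\ref{mainstructureS}, each such $K(h)$ is $\sigma^{-1}$-similar to a matrix $\mathbf{E}_{(c,d)}(u_1,\dots,u_k)$ with $k\geq 1$, $c,d\in\C^*$, and $u_i\in\C[h]\setminus\C$. Combining this with Theorem~\ref{isomorphismthm} (and the remark that the second bullet of that theorem is precisely $\sigma^{-1}$-similarity of the associated $P_{(\mathbf a,\alpha)}K$-matrices, which in the scalar type reduces to $\sigma^{-1}$-similarity of $K(h)$ itself), every simple module in our class is isomorphic to some $M\bigl(\alpha,\mathbf a,\mathbf{E}_{(c,d)}(u_1,\dots,u_k)\bigr)$.

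Next I would define
\[
\Psi:\bigsqcup_{k\geq 1}\mathcal{P}_k/G \longrightarrow \bigl\{\text{iso. classes of simple }M(\alpha,\mathbf a,K(h))\bigr\},\qquad
\bigl[(c,d,u_1,\dots,u_k)\bigr] \longmapsto \bigl[M\bigl(\alpha,\mathbf a,\mathbf{E}_{(c,d)}(u_1,\dots,u_k)\bigr)\bigr].
\]
Well-definedness of $\Psi$ on $G$-orbits, as well as its injectivity, is the content of Theorem~\ref{isomorphismtheorem}: two tuples in $\mathcal P_k$ (resp.\ in $\mathcal P_k$ and $\mathcal P_m$) give isomorphic modules if and only if $k=m$ and the tuples lie in the same $G$-orbit. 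Thus, the disjoint union over $k$ is forced by the length invariance in Proposition~\ref{sigmashorten}, and the $G$-action is exactly what collapses the remaining redundancy.

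For surjectivity, I would take an arbitrary simple $M(\alpha,\mathbf a,K(h))$, apply Theorem~\ref{mainstructureS} to obtain $K(h)\sim_{\sigma^{-1}}\mathbf{E}_{(c,d)}(u_1,\dots,u_k)$ with $k\geq 1$ and $u_i\in\C[h]\setminus\C$, and then invoke Theorem~\ref{isomorphismtheorem} (using that the parameters $\alpha$ and $\mathbf a$ agree trivially on both sides) to conclude $M(\alpha,\mathbf a,K(h))\simeq M\bigl(\alpha,\mathbf a,\mathbf{E}_{(c,d)}(u_1,\dots,u_k)\bigr)$. There is no genuine obstacle here: all the real work has been done in Theorems~\ref{mainstructureS} and \ref{isomorphismtheorem}, and the corollary is essentially a bookkeeping statement organizing their combined content into the bijection. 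The only point that deserves a brief sentence is why the union over $k$ is disjoint, which follows from the length being a $\sigma^{-1}$-similarity invariant on the subset of standard forms with all $u_i\notin\C$ (Proposition~\ref{sigmashorten}): tuples of different lengths cannot be $G$-equivalent and cannot produce isomorphic modules.
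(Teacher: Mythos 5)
Your argument is correct and is exactly the intended one: the paper states this corollary without a separate proof, as an immediate consequence of Theorems~\ref{newfamily}, \ref{mainstructureS} and \ref{isomorphismtheorem}, which is precisely how you assemble it. The only point worth flagging is that for the map to be literally surjective onto $\bigsqcup_{k\ge1}\mathcal P_k/G$ the orbits should be restricted to tuples with all $u_i\in\C[h]\setminus\C$ (a $G$-stable subset of $\mathcal P_k$, as your appeal to Proposition~\ref{sigmashorten} implicitly uses); this is an imprecision in the statement rather than in your proof.
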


\subsection{Interpretation of the group $G$ as a subgroup of $\Aut(\GL_2(\C[h]))$}
Let $P_\gamma=\mathrm{diag}(\gamma,1)\in \GL_2(\C)$ and define
\[
T_\gamma:\GL_2(\C[h])\to\GL_2(\C[h]),\quad T_\gamma(A)=P_\gamma A P_\gamma^{-1}
\]
Let $\sigma(A)(h):=A(h-1)$ and $\theta(A):=E(0)AE(0)^{-1}$. Set $S:=\theta\sigma=\sigma\theta$.
Let $H$ be the subgroup of $ \mathrm{Aut}(\GL_2(\C[h]))$ generated by $T_{\gamma}$ ($\gamma \in \C^*$) and $S$. Then one can easily check that The map
\[
\Psi:\ \C^*\rtimes_\Phi\Z\longrightarrow H,\qquad (\gamma,m)\longmapsto T_\gamma S^m,
\]
is an isomorphism.

\section*{Appendix A} \label{app-case-analysis}
Recall the definition of $T_{a,b}$ from Lemma \ref{surj-lem-ab}. The following lemma will be used frequently throughout the proof of Theorem \ref{explicitdescription}.
\begin{lemma} \label{usefullem}
Let $u \in \C[h]\setminus \C$ and $c \in \C^*$. Then $T_{1,c}(u)\neq 0$. Moreover, $T_{1,c}(u)=\beta$ for some $\beta \in \C^*$ if and only if $c=1$ and $u=\beta h+\theta$ for some $\theta \in \C$.
\end{lemma}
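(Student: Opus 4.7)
The plan is a short degree analysis of $T_{1,c}(u)(h)=u(h+1)-c\,u(h)$, splitting on whether $c=1$ or $c\neq 1$. Write $u(h)=a_nh^n+(\text{lower})$ with $n=\deg u\ge 1$ and $a_n\neq 0$. For the first assertion I would compare leading terms on the two sides of the definition of $T_{1,c}$, and for the second I would push the same degree bookkeeping a little further to see when the output is a nonzero constant.

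\smallskip

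\textbf{Step 1 (nonvanishing).} If $c\neq 1$, then
\[
T_{1,c}(u)(h)=a_n(1-c)\,h^n+(\text{lower terms}),
\]
and since $a_n(1-c)\neq 0$, the polynomial $T_{1,c}(u)$ has degree exactly $n\ge 1$; in particular it is nonzero. If $c=1$, then $T_{1,1}$ is the forward-difference operator, and from $(h+1)^n-h^n=n\,h^{n-1}+(\text{lower})$ I get
\[
T_{1,1}(u)(h)=n\,a_n\,h^{n-1}+(\text{lower terms}),
\]
whose leading coefficient $n\,a_n$ is nonzero because $n\ge 1$ and $a_n\neq 0$. Thus $T_{1,c}(u)\neq 0$ in either case, proving the first claim.

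\smallskip

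\textbf{Step 2 (characterization of constant outputs).} Suppose $T_{1,c}(u)=\beta\in\C^*$, so the output has degree $0$. If $c\neq 1$, Step~1 shows $\deg T_{1,c}(u)=n\ge 1$, contradicting degree $0$. Hence $c=1$. Under $c=1$, Step~1 gives $\deg T_{1,1}(u)=n-1$, so the equality $T_{1,1}(u)=\beta$ forces $n-1=0$, i.e.\ $n=1$. Writing $u=\beta' h+\theta$ with $\beta'\neq 0$, a direct computation yields
\[
T_{1,1}(u)(h)=\beta'(h+1)+\theta-\beta' h-\theta=\beta',
\]
so $\beta'=\beta$ and $u=\beta h+\theta$ as desired. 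Conversely, if $c=1$ and $u=\beta h+\theta$, the same identity shows $T_{1,1}(u)=\beta\in\C^*$.

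\smallskip

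There is no real obstacle here: the argument is routine once one notices that $T_{1,c}$ preserves degree when $c\neq 1$ and drops it by exactly one when $c=1$. The only small care needed is to keep the two cases separate and to use $n\ge 1$ (since $u\notin\C$) so that the leading coefficient $n\,a_n$ in the $c=1$ case is genuinely nonzero.
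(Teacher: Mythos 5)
Your proof is correct. The paper states Lemma~\ref{usefullem} without proof (it is treated as routine in Appendix A), so there is no argument of the authors' to compare against; your leading-coefficient analysis --- degree preserved when $c\neq 1$ because the top coefficient becomes $a_n(1-c)$, degree dropped by exactly one when $c=1$ because the coefficient of $h^{n-1}$ in $u(h+1)-u(h)$ is $n\,a_n\neq 0$ --- is the natural way to supply it, and both directions of the ``moreover'' statement are handled completely. It is also consistent with the paper's own treatment of $T_{a,b}$ in Lemma~\ref{surj-lem-ab}, where the same dichotomy appears in the binomial basis ($S_c$ upper triangular with diagonal $1-c$ for $c\neq1$, and the shift $e_k\mapsto e_{k-1}$ for $c=1$); your monomial-basis computation is an equivalent, equally short route.
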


\smallskip \noindent
\textbf{Proof of Theorem~\ref{explicitdescription}.}
Throughout the proof, let $k\in\Z_{\geq1}$ be arbitrary and let $u_1,\dots,u_k\in\C[h]\setminus\C$. Set $\mathbf u=(u_1,\dots,u_k)$. We proceed by a case-by-case analysis, together with systematic applications of Lemma~\ref{shortenlem}. From Lemma \ref{describsig}, $K(h) = \diag(a,b)$ or
$$
\begin{aligned}
K(h)&= (-1)^m\,\diag(a,b)_{[m]}\,E(0)\,E\left(-\Bigl(\tfrac{a}{b}\Bigr)^{(-1)^m}v_m\right)\cdots
   E\left(-\tfrac{a}{b}\,v_2\right)\\
&\hspace{5.4cm} \cdot E\left(-\tfrac{b}{a}\,v_1+v_1(h+1)\right)\,
   E\big(v_2(h+1)\big)\cdots E\big(v_m(h+1)\big),
\end{aligned}
$$
for some $m\in\Z_{\geq1}$ and $v_i\in\C[h]$ for $1\leq i\leq m$, with
$v_j\notin\C$ for $1<j<m$. If $m =2$, then $(v_1,v_2) \neq (0,0)$.

\smallskip\noindent
\textbf{Case 1:} $-\tfrac{b}{a}v_1+v_1(h+1) \in\C[h]\setminus\C$ and $v_m\in\C[h]\setminus\C$.

In this case, $K(h)$  is already in standard form and can be expressed as 
$$K(h) =  \mathbf{D}_{(a,b)}(0,0; \mathbf u).$$

\smallskip\noindent
\textbf{Case 2:} $-\tfrac{b}{a}v_1+v_1(h+1)\in\C[h]\setminus\C$ and $v_m=0$ ($m \geq 2$).
$$
\begin{aligned}
K(h)
&= (-1)^{m+1}\diag(a,b)_{[m]}\,
   E\left(-\Bigl(\tfrac{a}{b}\Bigr)^{(-1)^{m-1}}v_{m-1}\right)\cdots
   E\left(-\tfrac{a}{b}\,v_2\right)\\
&\hspace{4cm} \cdot
   E\left(-\tfrac{b}{a}\,v_1+v_1(h+1)\right)\,
   E\big(v_2(h+1)\big)\cdots E\big(v_{m-1}(h+1)\big)\,E(0).
\end{aligned}
$$

Then, $K(h)$ has the standard form $ \mathbf{D}_{(a,b)}(1,0; \mathbf u).$

\smallskip\noindent
\textbf{Case 3.} $-\tfrac{b}{a}v_1+v_1(h+1)\in\C[h]\setminus\C$ and $v_m\in\C^*$  ($m \geq 2$). 

For brevity, set $\eta_1:=-\Bigl(\tfrac{a}{b}\Bigr)^{(-1)^m}v_m \in\C^*.$

\smallskip
\textbf{Case 3.1 $m=2$.} Then 
$$
\begin{aligned}
K(h)&= \diag(a,b) E(0)E(\eta_1)E\left(-\tfrac{b}{a}\,v_1+v_1(h+1)\right)E\left(-\eta_1\tfrac{b}{a}\right)\\
&= \diag(a\eta_1^{-1},b\eta_1) E(-\eta_1) E\left(-\tfrac{b}{a}\,v_1+v_1(h+1) - \eta_1^{-1}\right) E\left(-\eta_1\tfrac{b}{a}\right)
\end{aligned}
$$

\smallskip
\textbf{Case 3.2 $m\geq 3$.} 
$$
\begin{aligned}
K(h)
&=(-1)^m\diag(a,b)_{[m]}\,
   \diag\left(\eta_1^{-1},\eta_1 \right)E(-\eta_1)\,E\left(-\Bigl(\tfrac{a}{b}\Bigr)^{(-1)^{m-1}}v_{m-1}(h)-\eta_1^{-1}\right)\cdots
   E\left(-\tfrac{a}{b}\,v_2(h)\right)\\
&\hspace{2.5cm} \cdot E\left(-\tfrac{b}{a}\,v_1(h)+v_1(h+1)\right)\,
   E\big(v_2(h+1)\big)\cdots E\big(v_{m-1}(h+1)\big)\,
   E\left(-\eta_1\Bigl(\tfrac{b}{a}\Bigr)^{(-1)^m}\right),
\end{aligned}
$$

In both Cases 3.1 and 3.2, $K(h)$ admits the following general standard form
$$K(h) =  \mathbf{K}_{(a,b)}(0;\eta_1; \mathbf u).$$

\smallskip\noindent
\textbf{Case 4.} $-\tfrac{b}{a}v_1+v_1(h+1)=0$ (equivalently, $v_1=0$) and $v_m \in\C[h]\setminus\C$ (if $m\geq 2$) .

\smallskip
\textbf{Case 4.1 $m=1$.} Then, $K(h) = -\diag(b,a)E(0)^2=\diag(b,a).$

\smallskip
\textbf{Case 4.2 $m\geq 2$.} 
$$
\begin{aligned}
K(h)&= (-1)^{m+1}\diag(a,b)_{[m]}\,
   E(0)\,E\left(-\Bigl(\tfrac{a}{b}\Bigr)^{(-1)^m}v_m\right)\cdots
   E\big(-\tfrac{a}{b}\,v_2+v_2(h+1)\big)\\
&\hspace{10cm}\cdot E\big(v_3(h+1)\big)\cdots E\big(v_m(h+1)\big).
\end{aligned}
$$

If $-\tfrac{a}{b}v_2(h)+v_2(h+1)\in\C[h]\setminus\C$, then $K(h)$ is in standard form, and admits the general formula
$$K(h) =  \mathbf{D}_{(a,b)}(0,1; \mathbf u).$$

\smallskip
If $-\tfrac{a}{b}v_2+v_2(h+1)=\beta_2\in\C^*$, then Lemma~\ref{usefullem} forces $a=b$. If $m=2$, we have $K(h) =-a\, E(0)E(\beta_2)$. If $m \geq 3$, then
$$
\begin{aligned}
K(h)&=(-1)^{m+1}a\,E(0)\,E(-v_m)\cdots E(-v_3)\,E(\beta_2)\,
  E\big(v_3(h+1)\big)\cdots E\big(v_m(h+1)\big)\\
&=(-1)^{m+1}a\diag\left(\beta_2^{(-1)^{\,m-1}}, \beta_2^{(-1)^{m}} \right)  E(0)\,E\left(-(\beta_2^2)^{(-1)^{\,m-1}}v_m\right)\cdots
  E\left(-\beta_2^{-2}v_4\right) \\
&\hspace{3.3cm}\cdot E\left(\beta_2^2\left(-v_3-\beta_2^{-1}\right)\right)E\left(v_3(h+1)-\beta_2^{-1}\right)E\big(v_4(h+1)\big)\cdots E\big(v_m(h+1)\big).
\end{aligned}
$$

The standard form of $K(h)$ can be expressed as $\mathbf{G}_{(a,a)}(0;\beta_2; \mathbf u)$.

\smallskip\noindent
\textbf{Case 5.} $-\tfrac{b}{a}v_1+v_1(h+1)=0$ and $v_m=0$ ($m\geq 3$).
$$
\begin{aligned}
K(h)
&= (-1)^{m}\diag(a,b)_{[m]}\,E\left(-\left(\tfrac{a}{b}\right)^{(-1)^{m-1}}v_{m-1}\right)\cdots E\left(-\tfrac{b}{a}v_3\right)\\
&\hspace{4.5cm}\cdot\;
E\left(-\tfrac{a}{b}v_2+v_2(h+1)\right)
E\left(v_3(h+1)\right)\cdots E\left(v_{m-1}(h+1)\right)E(0).
\end{aligned}
$$

If $-\tfrac{a}{b}v_2+v_2(h+1)\in\C[h]\setminus\C$, then $K(h)$ is in standard form and it can be written generally as
$K(h) = \mathbf{D}_{(a,b)}(1,1; \mathbf u).$

\smallskip
If $-\tfrac{a}{b}v_2+v_2(h+1) =\beta_2 \in \C^*$, then Lemma~\ref{usefullem} implies $a=b$. If $m=3$, we deduce that $K(h) =-a\,E(\beta_2)E(0)$. If $m \geq 4$, then
$$
\begin{aligned}
K(h)
&= (-1)^{m}a\, E\left(-v_{m-1}\right)\cdots E\left(-v_3\right)\,E(\beta_2)\,
    E\left(v_3(h+1)\right)\cdots E\left(v_{m-1}(h+1)\right)E(0)\\  
&= (-1)^{m}a\,\diag\left(\beta_2^{(-1)^{\,m-1}}, \beta_2^{(-1)^{m}} \right) E\left(-(\beta_2^2)^{(-1)^{m}} v_{m-1}\right)\cdots
   E\left(-\beta_2^{-2}v_4\right)\\
&\hspace{1cm}\cdot  E\left(\beta_2^2\left(-v_3-\beta_2^{-1}\right)\right) E\left(v_3(h+1)-\beta_2^{-1}\right)E(v_4(h+1))\dots E\left(v_{m-1}(h+1)\right)E(0).
\end{aligned}
$$

The general standard form of $K(h)$ can be written as $\mathbf{G}_{(a,a)}(1;\beta_2; \mathbf u)$.

\smallskip\noindent
\textbf{Case 6.} $-\tfrac{b}{a}v_1+v_1(h+1)=0$ and $v_m\in\C^*$ ($m \geq 2$).

As in Case 3, let $\eta_1=-\Bigl(\tfrac{a}{b}\Bigr)^{(-1)^m}v_m$.

\smallskip
\textbf{Case 6.1 $m=2$.} Then
$$K(h) =  \diag(a,b) E(0)E(\eta_1)E\left(0\right)E\left(-\eta_1\tfrac{b}{a}\right) = \begin{cases} \diag(-a,-b) E(0)E\left(\eta_1-\eta_1\tfrac{b}{a}\right), & \text{if } a\neq b,\\ a \I_2, & \text{if } a= b. \end{cases}  $$

\smallskip
\textbf{Case 6.2 $m=3$.} Then
$$
\begin{aligned}
K(h)&= \diag(b,a) E(0)E(\eta_1)E\left(-\tfrac{a}{b}\,v_2+v_2(h+1)\right)E\left(-\eta_1\tfrac{a}{b}\right)\\
&= \diag(b\eta_1^{-1},a\eta_1) E(-\eta_1) E\left(-\tfrac{a}{b}\,v_2+v_2(h+1) - \eta_1^{-1}\right) E\left(-\eta_1\tfrac{a}{b}\right).
\end{aligned}
$$

If $-\tfrac{a}{b}v_2+v_2(h+1)\in\C[h]\setminus\C$, $K(h)$ is in standard form, which we include as part of Case 6.3. If $-\tfrac{a}{b}v_2+v_2(h+1) = \beta_2 \in \C^*$, then Lemma~\ref{usefullem} implies that $a=b$ and  

$$K(h) = \begin{cases}a\diag(-\eta_1^{-1},-\eta_1) E(-2\eta_1), & \text{if } \beta_2 = \eta_1^{-1},\\ a\diag\big((\eta_1\beta_2 - 1)^{-1}, \eta_1\beta_2 - 1\big)\, E\left(\beta_2-\eta_1\beta_2^2 \right) E\left(-\eta_1 -(\beta_2 - \eta_1^{-1})^{-1}\right), & \text{if } \beta_2 \neq \eta_1^{-1}. \end{cases}$$

\smallskip
\textbf{Case 6.3 $m\geq 4$.} Then,
$$
\begin{aligned}
K(h)&=(-1)^{m+1}\diag(a,b)_{[m]}
 \diag\left(\eta_1^{-1},\eta_1 \right)
E(-\eta_1)\,
E\left(-\Bigl(\tfrac{a}{b}\Bigr)^{(-1)^{m-1}}v_{m-1}-\eta_1^{-1}\right) \\
&\hspace{3.5cm} \cdot E\left(-\Bigl(\tfrac{a}{b}\Bigr)^{(-1)^{m-2}}v_{m-2}\right) \cdots
E\left(-\tfrac{b}{a}\,v_3\right)
E\left(-\tfrac{a}{b}\,v_2+v_2(h+1)\right)\\
&\hspace{6cm} \cdot E\big(v_3(h+1)\big)\cdots E\big(v_{m-1}(h+1)\big)\,
E\left(-\eta_1\Bigl(\tfrac{b}{a}\Bigr)^{(-1)^m}\right).
\end{aligned}
$$

If $-\tfrac{a}{b}v_2+v_2(h+1)\in\C[h]\setminus\C$, then $K(h)$ is in standard form, and it can be written in the general form $K(h) =  \mathbf{K}_{(a,b)}(1;\eta_1; \mathbf u).$

\smallskip
If $-\tfrac{a}{b}v_2+v_2(h+1) =\beta_2\in \C^*$, then $a=b$. If $m =4$, then
$$
\begin{aligned}
K(h)&=-\diag\left(a\eta_1^{-1},a\eta_1 \right)E(-\eta_1)E\left(-v_{3}-\eta_1^{-1}\right)E(\beta_2)E\big(v_3(h+1)\big)\,E(-\eta_1)\\
&=-\diag\left(a\eta_1^{-1}\beta_2,a\eta_1 \beta_2^{-1} \right)E\left(-\eta_1 \beta_2^{-2}\right)E\left(-\beta_2^2(v_{3}+\eta_1^{-1}+ \beta_2^{-1}) \right)E\big(v_3(h+1)-\beta_2^{-1}\big)\,E(-\eta_1).
\end{aligned}
$$

If $m \geq 5$, then 
$$
\begin{aligned}
K(h)&=(-1)^{m+1}  \diag\left(a\eta_1^{-1},a\eta_1 \right)E(-\eta_1)\,
E\left(-v_{m-1}-\eta_1^{-1}\right)E\left(-v_{m-2}\right)\cdots E\left(-v_3\right)\,E(\beta_2)\\
&\hspace{8cm}\cdot E\big(v_3(h+1)\big)\cdots E\big(v_{m-1}(h+1)\big)\,E(-\eta_1)\\
&=(-1)^{m+1} \diag\left(a\eta_1^{-1}\beta_2^{(-1)^{m}},a\eta_1 \beta_2^{(-1)^{m-1}} \right)
E\big(-(\beta_2^2)^{(-1)^{m-1}}\eta_1\big)\,
E\left((\beta_2^2)^{(-1)^{m-2}}\left(-v_{m-1}-\eta_1^{-1}\right)\right)\\
&\hspace{2cm}\cdot E\left(-(\beta_2^2)^{(-1)^{m-3}}v_{m-2}\right)\cdots\,
E\left(-\beta_2^{-2}v_4\right)E\left(\beta_2^2\left(-v_3-\beta_2^{-1}\right)\right)
E\left(v_3(h+1)-\beta_2^{-1}\right)\\
&\hspace{8cm}\cdot E\left(v_4(h+1)\right)\cdots E\big(v_{m-1}(h+1)\big)\,E(-\eta_1).\\
\end{aligned}
$$

The general standard form of $K(h)$ can be written as
$K(h) =  \mathbf{Z}_{(a,a)}\left(\beta_2, \eta_1\left(\beta_2^2\right)^{(-1)^k}; \mathbf u\right).$

\smallskip\noindent
\textbf{Case 7.} $-\tfrac{b}{a}v_1+v_1(h+1)\in\C^*$ and $v_m \in\C[h]\setminus\C$. 

For brevity, set $\beta_1:= -\tfrac{b}{a}v_1+v_1(h+1)$.

\smallskip
\textbf{Case 7.1 $m=1$.}  Then, $K(h)=\diag(-b,-a)E(0)E(\beta_1).$

\smallskip
\textbf{Case 7.2 $m\geq2$.} In this case,
$$
\begin{aligned}
K(h)
&= (-1)^{m}\diag(a,b)_{[m]}
E(0)\,
E\left(-\left(\tfrac{a}{b}\right)^{(-1)^{m}}v_{m}\right)\cdots
E\left(-\tfrac{a}{b}v_2\right)\,E(\beta_1)\\
& \hspace{9.5cm}\cdot E\left(v_2(h+1)\right)\cdots E\left(v_m(h+1)\right)\\
&= (-1)^{m} \diag(a,b)_{[m]} \diag\left(\beta_1^{(-1)^{m}},{\beta_1}^{(-1)^{m-1}}\right)
E(0)\,E\left(-\left(\beta_1^{2}\tfrac{a}{b}\right)^{(-1)^{m}}v_m\right)\cdots
E\left(-\beta_1^{-2}\tfrac{b}{a}v_3\right)\\
&\hspace{3.0cm}\cdot E\left(-\beta_1^{2}\tfrac{a}{b}\,v_2-\beta_1\right)
E\left(v_2(h+1)-\beta_1^{-1}\right)E\left(v_3(h+1)\right)\cdots E\left(v_m(h+1)\right).
\end{aligned}
$$

The general standard form of $K(h)$ is $\mathbf{G}_{(a,b)}(0;\beta_1; \mathbf u)$.

\smallskip\noindent
\textbf{Case 8.} $-\tfrac{b}{a}v_1+v_1(h+1)\in\C^*$ and $v_m=0$ ($m \geq 2$). 

As in Case 7, set $\beta_1= -\tfrac{b}{a}v_1+v_1(h+1)$.

\smallskip
\textbf{Case 8.1 $m=2$.} Then, $K(h)=\diag(-a,-b)E(\beta_1)E(0).$

\smallskip
\textbf{Case 8.2 $m\geq3$.} In this case,
$$
\begin{aligned}
K(h)
&= (-1)^{m-1}\diag(a,b)_{[m]} \diag\left(\beta_1^{(-1)^{m}}, \beta_1^{(-1)^{m-1}}\right)
E\left(-\left(\beta_1^{2}\tfrac{a}{b}\right)^{(-1)^{m-1}}v_{m-1}\right)\cdots E\left(-\beta_1^{-2}\tfrac{b}{a}v_3\right) \\
&\hspace{2cm}\cdot E\left(-\beta_1^{2}\tfrac{a}{b}\,v_2-\beta_1\right) E\left(v_2(h+1)-\beta_1^{-1}\right)E\left(v_3(h+1)\right)\cdots E\left(v_{m-1}(h+1)\right)E(0).
\end{aligned}
$$

The general standard form of $K(h)$ can be written as $ \mathbf{G}_{(a,b)}(1;\beta_1; \mathbf u).$

\smallskip\noindent
\textbf{Case 9.} $-\tfrac{b}{a}v_1+v_1(h+1)\in\C^*$ and $v_m\in\C^*$ ($m \geq 2$).

As in Case 7, set $\beta_1= -\tfrac{b}{a}v_1+v_1(h+1)$ and let $\eta_2:=-\Bigl(\beta^2\tfrac{a}{b}\Bigr)^{(-1)^m}v_m.$

\smallskip
\textbf{Case 9.1 $m=2$.} Then 
$$
\begin{aligned}
K(h)&= \diag(a,b) E(0)E\left(\eta_2\beta_1^{-2}\right)E\left(\beta_1\right)E\left(-\eta_2\beta_1^{-2}\tfrac{b}{a}\right)\\
&= \diag(a\eta_2^{-1}\beta_1^2,b\eta_2 \beta_1^{-2}) E\left(-\eta_2\beta_1^{-2}\right) E\left(\beta_1 - \eta_2^{-1}\beta_1^2\right) E\left(-\eta_2\beta_1^{-2}\tfrac{b}{a}\right).
\end{aligned}
$$

If $\beta_1 = \eta_2$, then $K(h) =  \diag\left(-a\beta_1, -b\beta_1^{-1}\right) E\left(-\beta_1^{-1} -\beta_1^{-1}\tfrac{b}{a} \right)$. If $\beta_1 \neq \eta_2$, then
$$
K(h) =  \diag\left(a\left(\eta_2\beta_1^{-1} - 1\right)^{-1}, b\left(\eta_2\beta_1^{-1} - 1\right)\right)E\left(\beta_1-\eta_2 \right)E\left(-\eta_2\beta_1^{-2}\tfrac{b}{a}-(\beta_1 - \eta_2^{-1}\beta_1^{2})^{-1}\right).
$$

We note that if $\eta_2\beta_1^{-1} - 1=-\tfrac{a}{b}$, then $K(h) = -\diag(b,a) E\left(\beta_1\tfrac{a}{b} \right) E(0) $.

\smallskip
\textbf{Case 9.2 $m\geq 3$.} 
If $m=3$, then
$$
\begin{aligned}
K(h) &= -\diag(b\beta_1^{-1},a\beta_1)E(0)E(\eta_2)E\left(-\beta_1^{2}\tfrac{a}{b}\,v_2-\beta_1\right)E\left(v_2(h+1)-\beta_1^{-1}\right)E\left(-\eta_2\beta_1^2 \tfrac{a}{b}\right)\\
&=-\diag(b(\beta_1\eta_2)^{-1},a\beta_1\eta_2)E(-\eta_2)E\left(-\beta_1^{2}\tfrac{a}{b}\,v_2-\beta_1-\eta_2^{-1}\right)E\left(v_2(h+1)-\beta_1^{-1}\right)E\left(-\eta_2\beta_1^2 \tfrac{a}{b}\right).
\end{aligned}
$$

If $m \geq 4$, then
$$
\begin{aligned}
K(h)
&= (-1)^{m}
\diag(a,b)_{[m]} \diag\left(\beta_1^{(-1)^{m}},{\beta_1}^{(-1)^{m-1}}\right)
E(0)\,
E(\eta_2) E\left(-\left(\beta_1^{2}\tfrac{a}{b}\right)^{(-1)^{m-1}}v_{m-1}\right)\cdots\\
&\hspace{2cm} \cdot E\left(-\beta_1^{-2}\tfrac{b}{a}v_3\right)E\left(-\beta_1^{2}\tfrac{a}{b}\,v_2-\beta_1\right)
E\left(v_2(h+1)-\beta_1^{-1}\right)E\left(v_3(h+1)\right)\\
&\hspace{7cm}\cdots E\left(v_{m-1}(h+1)\right)E\left(-\eta_2\left(\beta_1^2 \tfrac{a}{b}\right)^{(-1)^{m-1}}\right)\\
&= (-1)^{m}\diag(a,b)_{[m]} \diag\left(\eta_2^{-1}\beta_1^{(-1)^{m}},\eta_2{\beta_1}^{(-1)^{m-1}}\right)
E(-\eta_2)\, E\left(-\left(\beta_1^{2}\tfrac{a}{b}\right)^{(-1)^{m-1}}v_{m-1}-\eta_2^{-1}\right)\\
&\hspace{2cm}\cdot E\left(-\left(\beta_1^{2}\tfrac{a}{b}\right)^{(-1)^{m-2}}v_{m-2}\right)\cdots E\left(-\beta_1^{-2}\tfrac{b}{a}v_3\right)E\left(-\beta_1^{2}\tfrac{a}{b}\,v_2-\beta\right)
E\left(v_2(h+1)-\beta_1^{-1}\right)\\
&\hspace{6cm}\cdot E\left(v_3(h+1)\right)\cdots E\left(v_{m-1}(h+1)\right) E\left(-\eta_2\left(\beta_1^2 \tfrac{a}{b}\right)^{(-1)^{m-1}}\right).
\end{aligned}
$$

The general standard form of $K(h)$ can be written as $\mathbf{Z}_{(a,b)}(\beta_1, \eta_2; \mathbf u)$.

\end{document}